\newtheorem{thm}{Theorem}[section]
\newtheorem{lem}[thm]{Lemma}
\newtheorem{cor}[thm]{Corollary}
\newtheorem{prop}[thm]{Proposition}
\newtheorem{exmp}[thm]{Example}
\newtheorem{conj}[thm]{Conjecture}
\newtheorem{rmk}[thm]{Remark}
\newtheorem{thm-con}[thm]{Theorem-Conjecture}
\numberwithin{equation}{section}
\theoremstyle{definition}
\title[Reversed Dickson polynomials]{Reversed Dickson polynomials}
\begin{document}

\author[Fang]{Jiaqi Fang} 
	\address{College of the Holy Cross, Worcester, MA, USA}
	\email{jfang24@g.holycross.edu}

\author[Fernando]{Neranga Fernando} 
	\address{Department of Mathematics and Computer Science,
College of the Holy Cross, Worcester, MA, USA}
	\email{nfernand@holycross.edu}
 
\author[Wu]{Haoming Wu} 
	\address{College of the Holy Cross, Worcester, MA, USA}
	\email{hwu23@g.holycross.edu} 

\subjclass[2010]{11T55, 11T06}
\keywords{Finite fields, Reversed Dickson polynomials, Permutation polynomials, Complete permutation polynomials, fixed points, cycle type} 
\thanks{The authors would like to thank the Weiss Summer Research Program at College of the Holy Cross for the support.}

\maketitle

\begin{abstract}
We investigate fixed points and cycle types of permutation polynomials and complete permutation polynomials arising from reversed Dickson polynomials of the first kind and second kind over $\mathbb{F}_p$. We also study the permutation behaviour of reversed Dickson polynomials of the first kind and second kind over $\mathbb{Z}_m$. Moreover, we prove two special cases of a conjecture on the permutation behaviour of reversed Dickson polynomials over $\mathbb{F}_p$. 
\end{abstract}

\section{Introduction}\label{S1}

Let $p$ be a prime, $\mathbb{F}_p$ be the finite field with $p$ elements and $\mathbb{F}_p^{\times}=\mathbb{F}_p\setminus \{0\}$. A polynomial $f\in \mathbb{F}_p[x]$ is called a \textit{permutation polynomial} of $\mathbb{F}_p$ if the associated mapping $x\mapsto f(x)$ from $\mathbb{F}_p$ to $\mathbb{F}_p$ is a permutation of $\mathbb{F}_p$. Every function from $\mathbb{F}_p$ to $\mathbb{F}_p$ can be represented by a unique polynomial in $\mathbb{F}_p[x]$. That is, if $\phi:\mathbb{F}_p\rightarrow \mathbb{F}_p$ is an arbitrary function, then there exists a unique polynomial $g\in \mathbb{F}_p[x]$ of degree at most $q-1$ such that $g(c)=\phi(c)$ for all $c\in \mathbb{F}_q$. The polynomial $g$ can be found by the Lagrange's interpolation for the function $\phi$. If $\phi$ is already given as a polynomial function, say $\phi: c\mapsto f(c)$ where $f\in \mathbb{F}_p[x]$, then $g$ can be obtained from $f$ by reduction modulo $x^p-x$. Finding new classes of permutation polynomials is an important topic in the area of finite fields as they play a central role in the algebraic and combinatorial aspects of finite fields. Permutation polynomials have many applications in the areas of Coding Theory, Cryptography, Combinatorics, Finite Geometry, and Computer Science, among other fields. Here are two examples of permutation polynomials of $\mathbb{F}_p$:

\begin{exmp}
Every linear polynomial $ax+b$, where $a\in \mathbb{F}_p^{\times}$ and $b\in \mathbb{F}_p$, is a permutation polynomial of $\mathbb{F}_p$.
\end{exmp}

\begin{exmp}
The monomial $x^n$ is a permutation polynomial of $\mathbb{F}_p$ if and only if $\text{gcd}(n,p-1)=1$. 
\end{exmp}

Charles Hermite studied the permutation behaviour of polynomials over finite prime fields. It was Leonard Eugene Dickson who studied polynomials for their permutation behaviour over finite arbitrary fields. The $n$-th Dickson polynomial is given by 

$$\displaystyle{D_n(x,a)=\sum_{i=0}^{\lfloor n/2\rfloor}\,\frac{n}{n-i}\binom{n-i}{i}\,(-a)^i}\,x^{n-2i},$$

where $a\in \mathbb{F}_p$ is a parameter. The recurrence relation of Dickson polynomials $D_n(x,a)$ is given by 
$$D_0(x,a)=2, \,\,D_1(x,a)=x, \,\,D_n(x,a)=xD_{n-1}(x,a)-aD_{n-2}(x,a),\,\,\text{for}\,n\geq 2.$$

L. E. Dickson studied these polynomials for their permutation property over finite fields. The permutation property of Dickson polynomials is completely known. When $a=0$, we have $D_n(x,0)=x^n$, which is a permutation polynomial of $\mathbb{F}_p$ if and only if $\text{gcd}(n,p-1)=1$. In \cite{NB68}, N\"{o}bauer proved that $D_n(x,a)$, where $a\neq 0$,  is a permutation polynomial of $\mathbb{F}_p$ if and only if $\text{gcd}(n,p^2-1)=1$. 

In 1923, Issai Schur named the polynomials $D_n(x,a)$ in Dickson's honor. He also introduced a variant of Dickson polynomials which are called Dickson polynomials of the second kind \cite{Shur23}. Because of this reason, the polynomials given by $D_n(x,a)$ are now called Dickson polynomials of the first kind.

The $n$-th Dickson polynomial of the second kind is given by 

$$\displaystyle{E_n(x,a)=\sum_{i=0}^{\lfloor n/2\rfloor}\,\binom{n-i}{i}\,(-a)^i}\,x^{n-2i},$$

where $a\in \mathbb{F}_p$ is a parameter. The recurrence relation of Dickson polynomials of the second kind $E_n(x,a)$ is given by 
$$E_0(x,a)=1, \,\,E_1(x,a)=x, \,\,E_n(x,a)=xE_{n-1}(x,a)-aE_{n-2}(x,a),\,\,\text{for}\,n\geq 2.$$

The permutation property of Dickson polynomials of the second kind is not completely known yet. We refer the reader to the monograph \cite{DM} for more details on Dickson polynomials, and to \cite{CC} for the permutation behaviour of reversed Dickson polynomials of the second kind. 

Reversed Dickson polynomials of the first kind were introduced in 2009 by Xiang-dong Hou, Gary L. Mulen, Joseph L. Yucas and James Sellers by reversing the roles of the variable and the parameter in Dickson polynomials of the first kind \cite{HMSY}. The $n$th reversed Dickson polynomial of the first kind is given by the explicit expression 

$$\displaystyle{D_n(a,x)=\sum_{i=0}^{\lfloor n/2\rfloor}\,\frac{n}{n-i}\,\binom{n-i}{i}\,a^{n-2i}\,(-x)^i},$$

where $a\in \mathbb{F}_p$ is a parameter. The recurrence relation of reversed Dickson polynomials is given by 

$$D_0(a,x)=2, \,\,D_1(a,x)=a,\,\,\,D_n(a,x)=aD_{n-1}(a,x)-xD_{n-2}(a,x)\,\,\,\text{for}\,\,n\geq 2.$$

In \cite{HMSY}, the authors studied these polynomials for their permutation behaviour over finite fields. Their work showed that the family of reversed Dickson polynomials, like its counterpart $D_n(x,a)$, is also a rich source of permutation polynomials. They showed that reversed Dickson polynomials are closely related to \textit{almost perfect nonlinear} (APN) functions which have many applications in Cryptography. 

Since 2009, many have studied reversed Dickson polynomials of the first kind for their permutation property over finite fields. In \cite{HL}, Xiang-dong Hou and Tue Ly found necessary conditions for reversed Dickson polynomials of the first kind to be permutational. 

In a similar way, one can define reversed Dickson polynomials of the second kind by interchanging the variable and the parameter in Dickson polynomials of the second kind. The $n$th reversed Dickson polynomial of the second kind is defined by 

$$\displaystyle{E_n(a,x)=\sum_{i=0}^{\lfloor n/2\rfloor}\,\binom{n-i}{i}\,a^{n-2i}\,(-x)^i},$$

where $a\in \mathbb{F}_p$ is a parameter. In \cite{HQZ}, Shaofang Hong, Xiaoer Qin and Wei Zhao found several necessary conditions for reversed Dickson polynomials of the second kind to be permutational.

In 2012, Qiang (Steven) Wang and Joseph L. Yucas introduced Dickson polynomials of the $(k+1)$-th kind \cite{WY}. For $a\in \mathbb{F}_p$, the $n$-th Dickson polynomial of the $(k+1)$-th kind $D_{n,k}(x,a)$ is defined by 
$$\displaystyle{D_{n,k}(x,a)=\sum_{i=0}^{\lfloor n/2\rfloor}\,\frac{n-ki}{n-i}\binom{n-i}{i}\,(-a)^i}\,x^{n-2i},$$
and $D_{0,k}(x,a)=2-k$. Note that $D_{n,0}(x,a)=D_n(x,a)$ and $D_{n,1}(x,a)=E_n(x,a)$. In \cite{WY}, the authors studied general properties of Dickson polynomials of the $(k+1)$-th kind, and in particular, they studied the permutation behaviour of Dickson polynomials of the third kind, $D_{n,2}(x,a)$, over finite fields. They found some necessary conditions for the Dickson polynomials of the third kind to be permutational over finite fields. Moreover, they completely explained the permutation property of Dickson polynomials of the third kind over finite prime fields. We refer the reader to \cite{WY} for more details on reversed Dickson polynomials of the $(k+1)$-th kind. 

In the same paper, Wang and Yucas introduced reversed Dickson polynomials of the $(k+1)$-th kind. For $a\in \mathbb{F}_p$, the $n$-th reversed Dickson polynomial of the $(k+1)$-th kind $D_{n,k}(a,x)$ is defined by 
$$\displaystyle{D_{n,k}(a,x)=\sum_{i=0}^{\lfloor n/2\rfloor}\,\frac{n-ki}{n-i}\binom{n-i}{i}\,a^{n-2i}\,(-x)^i},$$
and $D_{0,k}(a,x)=2-k$. Note that $D_{n,0}(a,x)=D_n(a,x)$ and $D_{n,1}(a,x)=E_n(a,x)$. In \cite{NF}, \cite{NF1} and \cite{NF2}, the second author of this paper studied the general properties and permutation behaviour of reversed Dickson polynomials of the $(k+1)$-th kind over finite fields. 

The set of permutation polynomials of $\mathbb{F}_p$ of degree less than or equal to $p-1$ form a group under composition and reduction modulo $x^p-x$, which is symmetric to $S_p$. Ay\c{c}a \c{C}e\c{s}melio\u{g}lu, Wilfried Meidl, Alev Topuzo\u{g}lu studied the cycle structure of permutation polynomials over finite fields in \cite{CMT}, Shair Ahmad explained the cycle structure of Dickson permutation polynomials $D_n(x,a)$ for $a=0$ in \cite{Ahmad}, and Rudolf Lidl and Gary L. Mullen determined the cycle structure of Dickson permutation polynomials $D_n(x,a)$ for $a\in \{-1,1\}$. Moreover, Adama Diene and Mohamed D. Salim determined the number of fixed points of Dickson polynomials of the second kind $E_n(x,a)$. Motivated by their results, we determine the number of fixed points and the cycle type of reversed Dickson permutation polynomials of the first kind and second kind. 

Most studies on permutation polynomials have been over finite fields. However, there have been a few studies on permutation polynomials over finite rings $\mathbb{Z}_m$, where $m>1$ is an integer, due to their applications in coding theory and cryptography. We refer the reader to \cite{GHM}, \cite{Rivest}, \cite{ST}, \cite{T} for further details on permutation polynomials over $\mathbb{Z}_m$. In \cite{QD}, Longjiang Qu and Cunsheng Ding investigated the permutation property of the Dickson polynomials of the second kind $E_n(x,a)$ over $\mathbb{Z}_m$. Motivated by these results, we investigate the permutation behaviour of reversed Dickson polynomials of the first kind and second kind over finite rings. 

A permutation polynomial of $\mathbb{F}_p$ is called a complete permutation polynomial (complete mapping) of $\mathbb{F}_p$ if $f(x)+x$ is also a permutation polynomial of $\mathbb{F}_p$. Complete permutation polynomials have many applications in orthogonal latin squares, combinatorics, and cryptography. Harald Niederreiter and Karl H. Robinson conducted a detailed study of complete permutation polynomials over finite fields in \cite{NR82}. Gary L. Mullen and Harald Niederreiter determined the complete permutation polynomials arising from Dickson polynomials in \cite{MN87}. Motivated by their work, we study complete permutation polynomials from reversed Dickson polynomials of the first kind and second kind. We refer the reader to \cite{BZ}, \cite{MP14} and \cite{NR82} for more about complete permutation polynomials and their applications in cryptography. 

The paper is organized as follows. In Section~\ref{S2}, we give some preliminary details. In Section~\ref{S3}, we first present several known results on reversed Dickson polynomials of the first kind including a conjecture on their permutation behaviour over $\mathbb{F}_p$. We then explain the number of fixed points of each reversed Dickson permutation polynomial. We also explain the cycle type of the reversed Dickson permutation polynomials. Moreover, in Section~\ref{S3}, we discuss the permutation behaviour of reversed Dickson polynomials of the first kind over $\mathbb{Z}_{2^t}$, $\mathbb{Z}_{3}$, and $\mathbb{Z}_{3^t}$, and also explain when the $n$th reversed Dickson polynomials is a complete permutation polynomial over $\mathbb{Z}_3$. We then find sufficient conditions for the reversed Dickson polynomials of the first kind to be a complete permutation polynomials over $\mathbb{F}_p$, where $p\geq 5$, and end the section with a conjecture that classifies all complete permutation polynomials arising from reversed Dickson polynomials of the first kind. 

In Section~\ref{S4}, we explore the permutation behaviour of the reversed Dickson polynomials of the second kind. We begin the section with some preliminary details, and then explain necessary and sufficient conditions for reversed Dickson polynomials of the second kind to be a permutation polynomial when $p=2$ and $p=3$. This leads us to explore their permutation behaviour over finite rings $\mathbb{Z}_{2^t}$ and $\mathbb{Z}_{3^t}$. Finding zeros of the regular derivative of polynomials in $\mathbb{F}_p$ is an important task in the study of their permutation behaviour over finite rings. We first explain when $E_{n_1}(1,x)=E_{n_2}(1,x)$ for all $x\in \mathbb{F}_p$, and then $E_{n_1}^{\prime}(1,x)=E_{n_2}^{\prime}(1,x)$ for all $x\in \mathbb{F}_p\setminus \{\frac{1}{4}\}$. We note to the reader that the periodicity of the sequence $\{E_n^{\prime}\Big(1,\frac{1}{4}\Big)\}$ depends on whether $p$ is a Mersenne prime or not. Our studies on the sequence $\{E_n^{\prime}\Big(1,\frac{1}{4}\Big)\}$ lead to three Conjectures. We completely classify the permutation behaviour of $E_n(1,x)$ over $\mathbb{Z}_{2^t}$ and $\mathbb{Z}_{3^t}$. We then find sufficient conditions on $n$ for $E_n(1,x)$ to be a permutation polynomial and a complete permutation polynomials over $\mathbb{F}_p$, where $p>3$. Our numerical results strongly suggest that these values of $n$ are also necessary. Therefore, we end the section with four conjectures. 

In Section~\ref{S5} and Section~\ref{S6}, we prove two special cases of a conjecture on the permutation behaviour of reversed Dickson polynomials of the first kind. 

In Appendix~\ref{AppA}, we present the cycle types of permutation polynomials and complete permutation polynomials from reversed Dickson polynomials of the first kind and second kind. In Appendix~\ref{AppB}, we present computational results that are required in some proofs. In Appendix~\ref{AppC}, we present the Python codes to generate reversed Dickson permutation polynomials and reversed Dickson complete permutation polynomials. Moreover, we give Python codes to compute cycle types of reversed Dickson permutation polynomials and fixed points. 

Hereafter, we call permutation polynomials, complete permutation polynomials and reversed Dickson polynomials PPs, CPPs and RDPs, respectively, throughout the paper. 


\section{Preliminary details}\label{S2}

When $a\in \mathbb{F}_p^*$, it turns out that we only need to consider the case $a=1$ to study the permutation behaviour of RDPs over finite fields because

$$\displaystyle{D_{n,k}(a,x)=a^n\,\,\sum_{i=0}^{\lfloor n/2\rfloor}\,\frac{n-ki}{n-i}\,\binom{n-i}{i}\,\,(-1)^i\,\,\Big(\frac{x}{a^2}\Big)^i=a^n\,D_{n,k}\Big(1,\frac{x}{a^2}\Big)}.$$

We note to the reader that this is not true over finite rings in general. In this paper, we always assume that $a=1$ unless specified otherwise. 

As we discuss the permutation behaviour of reversed Dickson polynomials of the first kind and second kind over finite rings in latter sections, we give the following two lemmas that play an important role in determining the permutation polynomials over finite rings. 

\begin{lem}\label{L1}
If $m=ab$, where $\textnormal{gcd}(a,b)=1$, then $g(x)$ is a PP mod $m$ if and only if $g(x)$ is a PP mod $a$ and mod $b$.
\end{lem}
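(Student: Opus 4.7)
The plan is to reduce the statement to the Chinese Remainder Theorem. Since $\gcd(a,b)=1$, the standard ring isomorphism
$$\varphi:\mathbb{Z}_m\longrightarrow \mathbb{Z}_a\times\mathbb{Z}_b,\qquad x\longmapsto(x\bmod a,\,x\bmod b),$$
is a bijection. I would first record the key compatibility: because $g(x)\in\mathbb{Z}[x]$ has integer coefficients (after lifting), the value $g(x)\bmod a$ depends only on $x\bmod a$, and similarly for $b$. Hence the polynomial map on $\mathbb{Z}_m$ transports, under $\varphi$, to the product map $(x_1,x_2)\mapsto(g(x_1)\bmod a,\,g(x_2)\bmod b)$ on $\mathbb{Z}_a\times\mathbb{Z}_b$.

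From this compatibility, the proof of the equivalence is a short diagram chase. For the forward direction, I would assume $g$ is a PP mod $m$ and show the induced map on each factor is a bijection: given any $y_1\in\mathbb{Z}_a$, lift it by CRT to some $y\in\mathbb{Z}_m$, then use surjectivity mod $m$ to find $x$ with $g(x)\equiv y\pmod m$, and reduce mod $a$; injectivity on the factor follows symmetrically. For the backward direction, if $g$ is a PP mod $a$ and mod $b$ separately, then the product map $(g,g)$ on $\mathbb{Z}_a\times\mathbb{Z}_b$ is a bijection as a Cartesian product of bijections, and transporting back through $\varphi^{-1}$ shows $g$ is a PP mod $m$.

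A cleaner way to package the whole argument, which I would likely use instead, is a counting argument: $g$ is a PP mod $m$ iff it is injective on $\mathbb{Z}_m$, and by the compatibility above, two elements $x,x'\in\mathbb{Z}_m$ satisfy $g(x)\equiv g(x')\pmod m$ iff $g(x)\equiv g(x')\pmod a$ and $g(x)\equiv g(x')\pmod b$. Injectivity on $\mathbb{Z}_m$ is then equivalent to injectivity on both $\mathbb{Z}_a$ and $\mathbb{Z}_b$, which (by finiteness) is equivalent to $g$ being a PP modulo each factor.

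I do not anticipate a real obstacle; the only point that requires a line of care is the coefficient-reduction step, namely that reducing $g$'s coefficients modulo $a$ gives a well-defined polynomial function on $\mathbb{Z}_a$ agreeing with the reduction of $g(x)$. Once that is noted, CRT carries everything through mechanically.
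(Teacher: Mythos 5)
Your proposal is correct: the CRT isomorphism $\mathbb{Z}_m\cong\mathbb{Z}_a\times\mathbb{Z}_b$, the observation that $g(x)\bmod a$ depends only on $x\bmod a$, and either the diagram chase or the injectivity/counting packaging (with finiteness) give exactly the standard proof of this equivalence. The paper itself offers no proof of this lemma--it is quoted as a known preliminary fact about permutation polynomials over $\mathbb{Z}_m$--so your argument is precisely the canonical one that fills it in, and no gap remains.
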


Because of the Fundamental Theorem of Arithmetic, we only need to explore PPs over $\mathbb{Z}_{p_i^{t_i}}$.

\begin{lem}\label{L2}
A polynomial $g(x)$ is a PP mod $p^t$, $t>1$, if and only if it is a PP mod $p$ and $g^{\prime}(s)\not\equiv 0\pmod{p}$ for every integer $s$, where $g^{\prime}(x)$ denotes the derivative of $g(x)$. 
\end{lem}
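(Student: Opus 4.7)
The main tool will be the Taylor-style congruence
\[
g(s+hp^{t-1}) \equiv g(s) + g'(s)\,h\,p^{t-1} \pmod{p^t} \qquad (t\ge 2),
\]
valid for all integers $s,h$. I would establish it by writing $g(x)=\sum_i c_i x^i$ with $c_i\in\mathbb{Z}$ and expanding each $(s+hp^{t-1})^i$ by the binomial theorem: every term past the constant and linear pieces carries a factor of $p^{2(t-1)}$, and the inequality $2(t-1)\ge t$ (for $t\ge 2$) causes all such terms to vanish modulo $p^t$.

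For the \emph{necessity} direction, assume $g$ is a PP mod $p^t$. Since $g\in\mathbb{Z}[x]$, reduction mod $p$ descends to a well-defined map $\bar g\colon \mathbb{Z}/p\to\mathbb{Z}/p$, and I would show $\bar g$ is a bijection by a fiber-counting argument: the natural surjection $\mathbb{Z}/p^t\to\mathbb{Z}/p$ has fibers of size $p^{t-1}$, and because $g$ permutes $\mathbb{Z}/p^t$, the preimage of each residue class mod $p$ must equal exactly one such fiber. Next, if some integer $s$ were to satisfy $g'(s)\equiv 0\pmod p$, the identity above with $h=1$ would yield $g(s+p^{t-1})\equiv g(s)\pmod{p^t}$, while $s+p^{t-1}\not\equiv s\pmod{p^t}$ — contradicting injectivity of $g$ on $\mathbb{Z}/p^t$.

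For the \emph{sufficiency} direction, I would induct on $t$. The base case $t=1$ is the hypothesis. For the inductive step, assume $g$ is a PP mod $p^{t-1}$ and suppose $g(x)\equiv g(y)\pmod{p^t}$. Reducing mod $p^{t-1}$ and using the inductive hypothesis forces $y\equiv x\pmod{p^{t-1}}$, so I can write $y=x+kp^{t-1}$. The identity then gives $g'(x)\,k\,p^{t-1}\equiv 0\pmod{p^t}$, i.e.\ $g'(x)\,k\equiv 0\pmod p$, and since $g'(x)\not\equiv 0\pmod p$ by assumption, $k\equiv 0\pmod p$, so $y\equiv x\pmod{p^t}$. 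Injectivity on the finite set $\mathbb{Z}/p^t$ upgrades to bijectivity.

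The only subtle ingredient is the Taylor-style congruence; once that is in hand, the rest is routine bookkeeping — fiber counting on one side and a short induction on the other — so I do not anticipate any real obstacle.
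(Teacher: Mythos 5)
Your proof is correct. Note, however, that the paper states this lemma without proof, as a classical preliminary fact (it is the standard criterion for permutation polynomials modulo prime powers, used e.g.\ in the cited work of Qu--Ding and Rivest), so there is no in-paper argument to compare against. Your route is the standard one: the Taylor-style congruence $g(s+hp^{t-1})\equiv g(s)+g'(s)hp^{t-1}\pmod{p^t}$ is valid since $2(t-1)\ge t$ for $t\ge 2$, the fiber-counting step for necessity works because $g(x)\bmod p$ depends only on $x\bmod p$ (so the preimage of a residue class under $x\mapsto g(x)\bmod p$ is a union of fibers of $\mathbb{Z}/p^t\to\mathbb{Z}/p$, and having cardinality $p^{t-1}$ it is a single fiber), and the induction for sufficiency correctly lifts injectivity from $p^{t-1}$ to $p^t$. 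No gaps.
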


Let $\text{gcd}(a,m)$ denote the greatest common divisor of the integers $a$ and $m$. If $\text{gcd}(a,m)=1$, then $a$ is called a quadratic residue modulo $m$ if the congruence $x^2\equiv a\pmod{m}$ has a solution, otherwise $a$ is called a quadratic nonresidue modulo $m$. Let $p$ denote an odd prime. Then the Legendre symbol $\left(\frac{a}{p}\right)$ is defined by 

\[
\left(\frac{a}{p}\right) = 
\begin{cases}
 1 & \text{if}\,\,a\,\,\text{is a quadratic residue modulo}\,\,p,\\  
 -1 & \text{if}\,\,a\,\,\text{is a quadratic nonresidue modulo}\,\,p,\\
 0 &\text{if}\,\,a\,\,\text{is a multiple of}\,\,p.
\end{cases}
\]

For later use, we also note that for an odd prime $p$, we have $\displaystyle\left(\dfrac{a}{p}\right) \displaystyle\equiv a^\frac{p-1}{2},$

\[
\left(\frac{-1}{p}\right) = 
\begin{cases}
 1 & \text{if}\,\,p \equiv 1 \pmod{4},\\  
 -1 & \text{if}\,\,p \equiv 3 \pmod{4},\\
\end{cases}
\]

and 

\[
\left(\frac{3}{p}\right) = 
\begin{cases}
 1 & \text{if}\,\,p\,\,\equiv 1\,\,\text{or}\,\,11 \pmod{12},\\  
 -1 & \text{if}\,\,p\,\,\equiv 5\,\,\text{or}\,\,7 \pmod{12}.\\
\end{cases}
\]


\section{Reversed Dickson polynomials of the first kind}\label{S3}


\subsection{Background}

We first list some nice properties satisfied by RDPs over $\mathbb{F}_{p}$ that appeared in \cite{HMSY}. 

\begin{lem}\label{LJ1}

Let $p$ be a prime and $e$ a positive integer. Let $n \geq 0$ be an integer. 

\begin{enumerate} 
\item [(i)] $D_{np}\left(1, x\right)=(D_{n}(1, x))^p$ in $\mathbb{F}_{p}[x]$. 
\item [(ii)] If $n_1,n_2>0$ are integers such that $n_1\equiv n_2\pmod{p^2-1}$, then $D_{n_1}(1,x)=D_{n_2}(1,x)$ for all $x\in \mathbb{F}_p$. 
\item [(iii)] Let $n_{1}$ and $n_{2}$ be two positive integers. Then $D_{n_1}(1,x)$ is a PP on $\mathbb{F}_{p}$ if and only if $D_{n_2}(1,x)$ is a PP on $\mathbb{F}_{p}$ whenever $n_{1}$ and $n_{2}$ belong to the same $p$-cyclotomic coset modulo $p^2-1$.
\end{enumerate} 

\end{lem}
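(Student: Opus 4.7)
The plan is to reduce all three parts to the Waring-type closed form for reversed Dickson polynomials: if $u+v = 1$ and $uv = x$, so that $u,v$ are the roots of $T^2 - T + x$ lying in $\mathbb{F}_{p^2}$, then
$$D_n(1,x) = u^n + v^n.$$
This identity is standard (see \cite{HMSY}) and can be verified by checking that $u^n + v^n$ satisfies the recurrence $D_0 = 2$, $D_1 = 1$, $D_n = D_{n-1} - x D_{n-2}$, using $u^n + v^n = (u+v)(u^{n-1}+v^{n-1}) - uv(u^{n-2}+v^{n-2})$.

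For part (i), I would apply the Frobenius to the Waring identity. Since every nontrivial binomial cross-term has coefficient divisible by $p$,
$$\bigl(D_n(1,x)\bigr)^p = (u^n + v^n)^p = u^{np} + v^{np} = D_{np}(1,x),$$
giving the desired polynomial identity in $\mathbb{F}_p[x]$. For part (ii), the key observation is that $u, v \in \mathbb{F}_{p^2}$, so whenever they are nonzero we have $u^{p^2-1} = v^{p^2-1} = 1$. Hence $n_1 \equiv n_2 \pmod{p^2-1}$ forces $u^{n_1} = u^{n_2}$ and $v^{n_1} = v^{n_2}$, giving $D_{n_1}(1,x) = D_{n_2}(1,x)$. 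The only edge case is $x = 0$, where $\{u,v\} = \{0,1\}$ and a direct check yields $D_n(1,0) = 1$ for every $n \geq 1$, so the equality persists.

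For part (iii), I would combine (i) and (ii) with Fermat's little theorem. Two positive integers in the same $p$-cyclotomic coset modulo $p^2 - 1$ are related by multiplication by a power of $p$; since $p^2 \equiv 1 \pmod{p^2-1}$, it suffices to handle $n_2 \equiv n_1 p \pmod{p^2-1}$. By (ii), $D_{n_2}(1,x) = D_{n_1 p}(1,x)$ for all $x \in \mathbb{F}_p$; by (i), $D_{n_1 p}(1,x) = \bigl(D_{n_1}(1,x)\bigr)^p$; and since $D_{n_1}(1,x) \in \mathbb{F}_p$ when $x \in \mathbb{F}_p$, Fermat's little theorem yields $\bigl(D_{n_1}(1,x)\bigr)^p = D_{n_1}(1,x)$. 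Thus $D_{n_1}$ and $D_{n_2}$ induce the same map on $\mathbb{F}_p$, so one is a PP iff the other is. The only real obstacle will be justifying the Waring identity cleanly and keeping track of whether each intermediate equality holds as a polynomial identity in $\mathbb{F}_p[x]$ (as required for (i)) or merely as an equality of functions on $\mathbb{F}_p$ (as in (ii) and (iii)).
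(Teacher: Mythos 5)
The paper does not actually prove Lemma~\ref{LJ1}: it is quoted from \cite{HMSY}, and your proposal is essentially the standard argument used there (the Waring expression $D_n(1,x)=u^n+v^n$, Frobenius for (i), the fact that nonzero elements of $\mathbb{F}_{p^2}$ have multiplicative order dividing $p^2-1$ for (ii), and Fermat's little theorem for (iii)), so it is correct. One clarification on the obstacle you flag for (i): to get a genuine identity in $\mathbb{F}_p[x]$ rather than an equality of functions, treat $x$ as an indeterminate and let $u,v$ be the roots of $T^2-T+x$ in an extension of $\mathbb{F}_p(x)$ (equivalently, write $x=y(1-y)$ with $y$ an indeterminate and use that $f(x)\mapsto f(y(1-y))$ is injective on $\mathbb{F}_p[x]$); verifying the equality only at the $p$ points $x\in\mathbb{F}_p$, where $u,v\in\mathbb{F}_{p^2}$, would pin down both sides only modulo $x^p-x$, whereas with $x$ transcendental the same Frobenius computation gives the polynomial identity. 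Parts (ii) and (iii) require only equality of the induced maps on $\mathbb{F}_p$, and there your pointwise argument, including the $x=0$ edge case and the reduction of the $p$-cyclotomic coset condition to $n_2\equiv n_1p\pmod{p^2-1}$, is complete.
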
 

The following conjecture that gives necessary and sufficient conditions for RDPs to be PPs appeared in \cite{HMSY}. The authors proved that the indices $n$ given in the conjecture are sufficient for RDPs to be PPs over $\mathbb{F}_p$. However, showing that they are also necessary remains an arduous task. In Theorem~\ref{T5} and Theorem~\ref{T7}, we prove two special cases of this conjecture: $p=5$ and $p = 7$. 
 
\begin{conj}\label{CCC1}
Let $p > 3$ be a prime and let $1\leq n \leq p^2-1.$ Then $D_n(1,x)$ is a PP on $\mathbb{F}_p$ if and only if 
\[
n = \left\{
\begin{alignedat}{2}
  & 2,2p,3,3p,p+1,p+2,2p+1 & \quad & \text{if}\ p\equiv 1 \pmod{12},\\
  & 2,2p,3,3p,p+1          & \quad & \text{if}\ p\equiv 5 \pmod{12},\\
  & 2,2p,3,3p,p+2,2p+1     & \quad & \text{if}\ p\equiv 7 \pmod{12}, \\
  & 2,2p,3,3p              & \quad & \text{if}\ p\equiv 11 \pmod{12};
\end{alignedat}
\right.
\]
\end{conj}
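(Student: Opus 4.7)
The plan is to prove Conjecture~\ref{CCC1} by reducing to representatives of $p$-cyclotomic cosets modulo $p^2-1$ and combining Hermite's criterion with explicit collision arguments. By Lemma~\ref{LJ1}(iii), it suffices to check one representative from each coset; there are $(p-1)(p+2)/2$ of them. The sufficiency of the listed indices is already in \cite{HMSY}, so I would focus on the converse. The structural tool is the Waring-type identity $D_n(1,x) = y_1^n + y_2^n$, where $y_1,y_2$ are the roots in $\overline{\mathbb{F}_p}$ of $T^2 - T + x$. Setting $H = \{y \in \mathbb{F}_{p^2} : y + y^p = 1\}$ and $K = \mathbb{F}_p \cup H$ with the equivalence $y \sim 1-y$, the quotient $K/{\sim}$ is in bijection with $\mathbb{F}_p$ via $x = y(1-y)$, and $D_n(1,x)$ becomes the map $\{y, 1-y\} \mapsto y^n + (1-y)^n$. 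According to whether $1-4x$ is a nonzero square, zero, or a nonsquare, this splits into three regimes: (A) $y_1,y_2 \in \mathbb{F}_p \setminus \{1/2\}$; (B) $y_1 = y_2 = 1/2$, with $D_n(1,1/4) = 2^{1-n}$; and (C) $y_1 \in H \setminus \mathbb{F}_p$, $y_2 = y_1^p$, with $D_n(1,x) = \mathrm{Tr}_{\mathbb{F}_{p^2}/\mathbb{F}_p}(y_1^n)$.

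I would attack necessity by Hermite's criterion: show that for every $n \in \{1,\dots,p^2-1\}$ outside the conjectured list, the power sum $S_t(n) = \sum_{x \in \mathbb{F}_p} D_n(1,x)^t$ is nonzero for some $1 \leq t \leq p-2$. Expanding $D_n(1,x)^t$ multinomially and pushing the regime-(C) contribution onto $H$ yields an expression for $S_t(n)$ as a combination of sums $\sum_{y \in H} y^{nk}$, which are classical character sums over $\mathbb{F}_{p^2}$ and are evaluable via Gauss and Jacobi sums. The conjecture's dependence on $p \bmod 12$ is entirely consistent with this route: the Gauss sums controlling $S_t(n)$ naturally carry the quadratic characters $\left(\tfrac{-1}{p}\right)$ and $\left(\tfrac{3}{p}\right)$, whose joint values (recorded in Section~\ref{S2}) split exactly into the four residue classes appearing in the conjecture. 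A complementary tactic, of the kind used in Sections~\ref{S5}--\ref{S6} for $p = 5$ and $p = 7$, is to exhibit, for each excluded $n$, an explicit collision $D_n(1,x_1) = D_n(1,x_2)$ with $x_1 \neq x_2$; for instance $D_4(1,x) = 1 - 4x + 2x^2$ gives collisions whenever $x_1 + x_2 = 2$, and one would look for analogous algebraic constraints governing each excluded cyclotomic coset.

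The decisive hurdle is the number of cosets: $(p-1)(p+2)/2$ grows quadratically, so a coset-by-coset check is infeasible for general $p$ and a uniform argument is essential. Hermite's criterion converts each coset into the nonvanishing of a single character sum, but for arbitrary $n$ these sums resist elementary evaluation: one either needs a Weil-type bound sharp enough to preclude accidental cancellations across the whole range of $n$ and $t$, or a structural identity that collapses many cosets into a single parametric family. I expect the decisive step to be the reduction of all excluded cosets to a bounded number of model cases labeled by $n \bmod (p+1)$ (which pins down the cyclotomic coset up to the Frobenius action) together with $p \bmod 12$, each resolved either by a direct collision in the spirit of the $n = 4$ example above or by an explicit Gauss-sum evaluation. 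This reduction is precisely where the argument in \cite{HMSY} stalls, and it is the main reason the conjecture has remained open despite more than a decade of attention.
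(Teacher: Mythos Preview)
The statement you are addressing is a \emph{conjecture} in the paper, not a theorem, and the paper does not contain a proof of it. The paper is explicit on this point: sufficiency of the listed indices is attributed to \cite{HMSY}, while necessity ``remains an arduous task.'' The only progress the paper records is the verification of the two special cases $p=5$ and $p=7$ (Theorems~\ref{T5} and~\ref{T7}), carried out by direct tabulation of $D_n(1,x)$ for every $x\in\mathbb{F}_p$ and every $n$ up to $p^2-1$, followed by inspection for collisions. There is no general argument in the paper against which to compare your proposal.

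Your proposal is therefore not a proof but a research plan, and you say so yourself: the reduction of the $(p-1)(p+2)/2$ cyclotomic cosets to a bounded number of model cases is identified as the missing step, and you correctly note that this is exactly where \cite{HMSY} stalls. The Hermite--character-sum framework you sketch is reasonable and is the natural way to bring the $p\bmod 12$ dichotomy into play via $\left(\tfrac{-1}{p}\right)$ and $\left(\tfrac{3}{p}\right)$, but nothing in your outline closes the gap. In particular, the hoped-for ``structural identity that collapses many cosets into a single parametric family'' is asserted as an expectation, not supplied, and without it the argument does not go through for a single $p$ beyond what brute force already handles. So the honest summary is: the paper proves nothing here beyond $p\in\{5,7\}$, your plan does not either, and the conjecture remains open.
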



\subsection{Fixed points and cycle types of reversed Dickson permutation polynomials}

In this subsection, we investigate fixed points and cycle types of reversed Dickson permutation polynomials. We begin the subsection with a lemma of which proof is obvious. 

\begin{lem}\label{LJ61}
Every linear polynomial $ax+1$, where $a \in\mathbb{F}_p\setminus \{0,1\}$, has exactly 1 fixed point.
\end{lem}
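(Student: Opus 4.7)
The plan is to directly solve the defining fixed-point equation. A fixed point of the map $x \mapsto ax + 1$ on $\mathbb{F}_p$ is by definition any $x \in \mathbb{F}_p$ satisfying $ax + 1 = x$. I would first rewrite this as $(a-1)x = -1$ and then use the hypothesis $a \neq 1$ (so $a - 1 \in \mathbb{F}_p^{\times}$) to conclude that this linear equation has a unique solution $x = (1-a)^{-1}$ in $\mathbb{F}_p$.

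The only hypothesis that gets used nontrivially is $a \neq 1$; the condition $a \neq 0$ is not needed for the fixed point count (it is presumably included in the lemma to keep $ax + 1$ genuinely linear and consistent with the upcoming analysis of reversed Dickson permutations). The entire argument is a one-line inversion in the field $\mathbb{F}_p$, and there is no real obstacle: the existence of $(a-1)^{-1}$ follows because $\mathbb{F}_p$ is a field and $a - 1$ is nonzero, and uniqueness follows because a nondegenerate linear equation over a field has exactly one solution. For this reason the authors correctly label the proof as obvious, and I would simply record the two lines above as the full argument.
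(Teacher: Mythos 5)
Your proposal is correct and is exactly the argument the paper has in mind: the paper states the lemma with the remark that its proof is obvious, and the intended reasoning is precisely the one-line solution of $ax+1=x$, i.e.\ $x=(1-a)^{-1}$, using only $a\neq 1$. Your observation that $a\neq 0$ plays no role in the fixed-point count is also accurate.
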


\begin{prop}\label{PJ61}
Let $p\geq 3$ be an odd prime and $n\in \{2, 2p\}$. Then, the reversed Dickson permutation polynomial $D_n(1,x)$ has exactly one fixed point. 
\end{prop}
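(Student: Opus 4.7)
The plan is to show that $D_2(1,x)$ and $D_{2p}(1,x)$ both induce the same affine map $x \mapsto -2x + 1$ on $\mathbb{F}_p$, and then invoke Lemma~\ref{LJ61} to get exactly one fixed point. First, for $n = 2$, I would apply the recurrence $D_n(1,x) = D_{n-1}(1,x) - xD_{n-2}(1,x)$ with the initial values $D_0(1,x) = 2$ and $D_1(1,x) = 1$ to obtain $D_2(1,x) = 1 - 2x$ directly.

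For $n = 2p$, I would combine Lemma~\ref{LJ1}(i) with Fermat's little theorem: Lemma~\ref{LJ1}(i) applied with $n=2$ gives $D_{2p}(1,x) = (D_2(1,x))^p = (1 - 2x)^p$ in $\mathbb{F}_p[x]$, and evaluating at any $c \in \mathbb{F}_p$ we have $(1 - 2c)^p = 1 - 2c$. Hence, as a function on $\mathbb{F}_p$, $D_{2p}(1,x)$ coincides with $-2x + 1$, matching $D_2(1,x)$. Thus the fixed-point problem for both values of $n$ collapses to the single linear equation $-2x + 1 = x$ on $\mathbb{F}_p$.

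Finally, I would apply Lemma~\ref{LJ61} with $a = -2$. The required hypothesis $a \in \mathbb{F}_p \setminus \{0,1\}$ translates to $-2 \not\equiv 0 \pmod{p}$ (automatic) and $-2 \not\equiv 1 \pmod{p}$, i.e.\ $p \neq 3$. For $p \geq 5$ this is immediate and gives the unique fixed point $x = 3^{-1}$ coming from $3x = 1$. (The case $p = 3$ degenerates to $x \mapsto x+1$, a fixed-point-free permutation, so the statement is really meaningful for $p \geq 5$.) There is no substantive obstacle: once the closed form of $D_2(1,x)$ is in hand, the $D_{2p}$ case is a one-line consequence of Lemma~\ref{LJ1}(i), and the only minor point worth flagging is the verification of Lemma~\ref{LJ61}'s hypothesis in small characteristic.
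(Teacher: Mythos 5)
Your proposal is correct and takes essentially the same route as the paper: reduce both $D_2(1,x)$ and $D_{2p}(1,x)$ to the linear polynomial $1-2x$ on $\mathbb{F}_p$ and then apply Lemma~\ref{LJ61}. Your caveat about $p=3$ is well taken---there $-2\equiv 1$, the map becomes $x\mapsto x+1$, which is fixed-point free (as the paper itself records in Theorem~\ref{TJ7T1})---so the statement genuinely requires $p>3$, a point the paper's one-line proof glosses over.
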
 

\begin{proof}
The proof follows form the fact that each reversed Dickson permutation polynomial $D_n(1,x)$ in this case is a linear polynomial of the form $ax+1$, where $a \in\mathbb{F}_p\setminus \{0,1\}$. 
\end{proof}

\begin{prop}\label{PPJ61}
Let $p>3$ be an odd prime and $n\in \{3, 3p\}$. Then, the reversed Dickson permutation polynomial $D_n(1,x)$ has exactly one fixed point. 
\end{prop}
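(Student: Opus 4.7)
The plan is to reduce both cases $n=3$ and $n=3p$ to Lemma~\ref{LJ61} by showing that, as a function on $\mathbb{F}_p$, $D_n(1,x)$ is a linear polynomial of the shape $ax+1$ with $a\in \mathbb{F}_p\setminus \{0,1\}$.

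First I would compute $D_3(1,x)$ explicitly. Using the recurrence $D_0(1,x)=2$, $D_1(1,x)=1$, $D_n(1,x)=D_{n-1}(1,x)-xD_{n-2}(1,x)$, one gets $D_2(1,x)=1-2x$ and then $D_3(1,x)=(1-2x)-x=1-3x$. (The same value also comes directly from the explicit formula, summing the $i=0$ and $i=1$ terms.) For $p>3$ the coefficient $-3$ lies in $\mathbb{F}_p^\times$, and $-3\equiv 1\pmod p$ would force $p\mid 4$, which is impossible. Hence $-3\in \mathbb{F}_p\setminus\{0,1\}$, and Lemma~\ref{LJ61} applied to $D_3(1,x)=-3x+1$ gives exactly one fixed point.

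Next I would handle $n=3p$ using Lemma~\ref{LJ1}(i), which yields the identity $D_{3p}(1,x)=\bigl(D_3(1,x)\bigr)^p=(1-3x)^p$ in $\mathbb{F}_p[x]$. Expanding by the Frobenius, this equals $1-3^p x^p$. As functions on $\mathbb{F}_p$, Fermat's little theorem gives $x^p=x$ and $3^p=3$, so the induced map is $x\mapsto -3x+1$, precisely the same linear function handled above. Another application of Lemma~\ref{LJ61} finishes the case $n=3p$.

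There is no real obstacle here: the entire argument is a short computation plus two invocations of Lemmas~\ref{LJ1}(i) and \ref{LJ61}. The only tiny points to verify carefully are the two inequalities $-3\neq 0$ and $-3\neq 1$ in $\mathbb{F}_p$, both of which follow immediately from the hypothesis $p>3$.
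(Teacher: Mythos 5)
Your proposal is correct and follows the same route as the paper: the paper's proof simply observes that for $n\in\{3,3p\}$ the polynomial $D_n(1,x)$ acts on $\mathbb{F}_p$ as a linear map $ax+1$ with $a\in\mathbb{F}_p\setminus\{0,1\}$ and then invokes Lemma~\ref{LJ61}, which is exactly what you do, just with the computation $D_3(1,x)=1-3x$ and the reduction of $D_{3p}$ via Lemma~\ref{LJ1}(i) written out explicitly.
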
 

\begin{proof}
The proof follows form the fact that each reversed Dickson permutation polynomial $D_n(1,x)$ in this case is a linear polynomial of the form $ax+1$, where $a \in\mathbb{F}_p\setminus \{0,1\}$. 
\end{proof}

\begin{thm}
Let $p\equiv 5\pmod{12}$. The permutation polynomial $D_{p+1}(1,x)$ has no fixed point. 
\end{thm}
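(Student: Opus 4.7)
The plan is to exploit the classical identity $D_n(1,x) = y^n + (1-y)^n$ valid whenever $y$ is a root (in $\overline{\mathbb{F}_p}$) of $t^2 - t + x = 0$; this follows from the recurrence given in the introduction since both sides satisfy $f_n = f_{n-1} - x f_{n-2}$ with initial values $2, 1$. Using this identity I will derive a piecewise closed form for $D_{p+1}(1,x)$ and then solve the fixed point equation on each piece.

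First I split $\mathbb{F}_p$ according to whether $1-4x$, the discriminant of $t^2 - t + x$, is a square in $\mathbb{F}_p$. If $1-4x$ is a square (including $0$), then $y\in\mathbb{F}_p$, so $y^p=y$ and $(1-y)^p=1-y$; hence
\[
D_{p+1}(1,x)=y^{p+1}+(1-y)^{p+1}=y^{2}+(1-y)^{2}=1-2y(1-y)=1-2x.
\]
If $1-4x$ is a non-square, then $y\in\mathbb{F}_{p^2}\setminus\mathbb{F}_p$ and $y$, $1-y$ are the two Galois conjugate roots over $\mathbb{F}_p$, so the nontrivial Frobenius sends $y\mapsto 1-y$. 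Therefore $y^p=1-y$ and $(1-y)^p=y$, which gives $y^{p+1}=y(1-y)=x$ and $(1-y)^{p+1}=(1-y)y=x$, whence $D_{p+1}(1,x)=2x$.

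Next I solve the fixed point condition on each piece. The equation $1-2x=x$ yields $x=3^{-1}$, which is a genuine fixed point only if $1-4\cdot 3^{-1}=-3^{-1}$ is a square in $\mathbb{F}_p$ (it is nonzero since $p>3$). The equation $2x=x$ yields $x=0$, which is a genuine fixed point only if $1-4\cdot 0 = 1$ is a non-square; this fails for every odd prime. Hence the only candidate is $x = 3^{-1}$, subject to $-3^{-1}$ being a quadratic residue.

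Finally I use the quadratic-residue formulas from Section~\ref{S2}. Since $\left(\tfrac{3^{-1}}{p}\right)=\left(\tfrac{3}{p}\right)$, multiplicativity of the Legendre symbol gives
\[
\left(\tfrac{-3^{-1}}{p}\right)=\left(\tfrac{-1}{p}\right)\left(\tfrac{3}{p}\right).
\]
The hypothesis $p\equiv 5\pmod{12}$ forces $p\equiv 1\pmod 4$, so $\left(\tfrac{-1}{p}\right)=1$, and by the formula recalled in Section~\ref{S2} it also forces $\left(\tfrac{3}{p}\right)=-1$. The product equals $-1$, so $-3^{-1}$ is a non-square and $x=3^{-1}$ lies in the \emph{second} piece, where $D_{p+1}(1,3^{-1})=2\cdot 3^{-1}\neq 3^{-1}$. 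Thus $D_{p+1}(1,x)$ has no fixed point. The only mildly technical step is the Frobenius observation that $y^p=1-y$ when $1-4x$ is a non-square, but this is immediate from the fact that the nontrivial element of $\operatorname{Gal}(\mathbb{F}_{p^2}/\mathbb{F}_p)$ must interchange the two roots of the irreducible quadratic $t^2-t+x$.
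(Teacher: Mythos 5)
Your proof is correct. It reaches the same arithmetic core as the paper --- the only candidate fixed point is $3^{-1}$, and it is ruled out because $-3^{-1}$ is a quadratic nonresidue when $p\equiv 5\pmod{12}$ (using exactly the Legendre-symbol facts recalled in Section~\ref{S2}) --- but your route to that point differs in a worthwhile way. The paper quotes the closed form $D_{p+1}(1,x)=\tfrac12+\tfrac12(1-4x)^{\frac{p+1}{2}}$ from \cite[Proposition 5.1]{HMSY}, assumes a fixed point $a$, and squares the resulting equation in $b=1-4a$, which produces the quadratic $(b-1)(3b+1)=0$ and forces it to discard the extraneous root $b=1$ before plugging $a=\tfrac13$ back in to get the residue contradiction. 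You instead derive the equivalent piecewise description ($D_{p+1}(1,x)=1-2x$ when $1-4x$ is a square, $=2x$ when it is a nonsquare) directly from the functional identity $D_n(1,x)=y^n+(1-y)^n$ and the Frobenius action on the roots of $t^2-t+x$, and then solve two linear fixed-point equations, checking consistency of each candidate with its piece. This is self-contained (no appeal to the HMSY proposition), avoids the squaring step and its spurious root, and makes the exclusion of the candidate $x=0$ explicit; the paper's argument is shorter granted the cited formula. Both arguments are valid, and yours is arguably the cleaner organization of the same underlying computation.
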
 

\begin{proof}
Consider the polynomial $D_{p+1}(1,x)$. From \cite[Proposition 5.1]{HMSY}, we have $$D_{p+1}(1,x)=\frac{1}{2}+\frac{1}{2}(1-4x)^{\frac{p+1}{2}}.$$
Assume that $D_{p+1}(1,x)$ has a fixed point, say $a$. Clearly $a\neq0$ since $D_{p+1}(1,0)=1.$ Since $a$ is a fixed point, we have $$a=\frac{1}{2}+\frac{1}{2}(1-4a)^{\frac{p+1}{2}},$$
which implies
\begin{equation}\label{EE1}
2a-1=(1-4a)^{\frac{p+1}{2}}.
\end{equation} 

Let $b=1-4a$. Since $a\neq 0$, $b\neq 1$. Also, note that $b\neq 0$ as $\frac{1}{4}$ is not a fixed point. From \eqref{EE1}, we have
$$\displaystyle{-\,\frac{(b+1)}{2}=b^{\frac{p+1}{2}}}.$$

Square both sides to obtain 
$$3b^2-2b-1=0,$$
which implies
$$(b-1)(3b+1)=0.$$

Since, $b\neq 1$, we have $b=-\frac{1}{3}$, that is, $a=\frac{1}{3}$. Since $a=\frac{1}{3}$ is a fixed point by assumption, we have

$$\frac{1}{3}=D_{p+1}\Big(1,\frac{1}{3}\Big)=\frac{1}{2}+\frac{1}{2}\Big(1-\frac{4}{3}\Big)^{\frac{p+1}{2}},$$

which can be simplified to $\Big(\frac{-1}{3}\Big)^{\frac{p+1}{2}}=1.$ This implies $\frac{-1}{3}$ is a quadratic residue in $\mathbb{F}_p$. This is a contradiction because $-1$ is a quadratic residue and $3$ is a quadratic nonresidue whenever $p\equiv 5\pmod{12}$, and thus $-\frac{1}{3}$ must be a quadratic nonresidue. This completes the proof. 
\end{proof} 

\begin{thm}\label{TTJ6}
Let $p\equiv 1\pmod{12}$. Then the permutation polynomial $D_{p+1}(1,x)$ has exactly one fixed point, and the permutation polynomials $D_{p+2}(1,x)$ and $D_{2p+1}(1,x)$ have exactly $\frac{p+1}{2}$ fixed points. 
\end{thm}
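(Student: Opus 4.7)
The plan is to treat the three polynomials separately, using the closed form $D_{p+1}(1,x) = \tfrac12 + \tfrac12(1-4x)^{(p+1)/2}$ from \cite[Prop.~5.1]{HMSY} as the starting point, together with the RDP recursion and Lemma~\ref{LJ1}. The hypothesis $p \equiv 1 \pmod{12}$ will enter only through the Legendre symbols recorded in Section~\ref{S2}, which give $\left(\tfrac{-1}{p}\right) = \left(\tfrac{3}{p}\right) = 1$, so that in particular $-1/3$ is a quadratic residue.

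For $D_{p+1}(1,x)$, I would repeat the argument of the preceding theorem almost verbatim. Setting $a$ to be a fixed point and $b = 1 - 4a$, squaring the fixed-point equation and using $b^{p+1} = b^2$ in $\mathbb{F}_p$ yields $(b-1)(3b+1) = 0$. The candidate $b=1$, i.e.\ $a = 0$, is ruled out since $D_{p+1}(1,0) = 1 \neq 0$. The candidate $b = -1/3$, i.e.\ $a = 1/3$, is a fixed point precisely when $(-1/3)^{(p-1)/2} = 1$; under our hypothesis this holds because $-1$ and $3$ are both QRs modulo $p$. So $a = 1/3$ is the unique fixed point.

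For $D_{p+2}(1,x)$, I would first use Lemma~\ref{LJ1}(i) to compute $D_p(1,x) = D_1(1,x)^p = 1$, and then apply the recursion $D_{p+2} = D_{p+1} - x D_p$ to obtain the closed form
$$D_{p+2}(1,x) = \tfrac12 + \tfrac12(1-4x)^{(p+1)/2} - x.$$
The fixed-point equation becomes $b^{(p+1)/2} = -b$, where $b = 1 - 4a$. Either $b = 0$, giving the single fixed point $a = 1/4$ (which is directly verified by plugging in), or $b \neq 0$ and $b^{(p-1)/2} = -1$, i.e.\ $b$ is a nonzero QNR. Since $a \mapsto 1-4a$ is a bijection of $\mathbb{F}_p$, this yields $(p-1)/2$ further fixed points, one for each nonzero QNR. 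Adding them together gives $\tfrac{p+1}{2}$ fixed points in total.

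For $D_{2p+1}(1,x)$, I would reduce to the $D_{p+2}$ case via Lemma~\ref{LJ1}. Since $(p+2)p = p^2 + 2p \equiv 2p+1 \pmod{p^2-1}$, parts (ii) and (i) of the lemma give, for every $x \in \mathbb{F}_p$,
$$D_{2p+1}(1,x) = D_{(p+2)p}(1,x) = \bigl(D_{p+2}(1,x)\bigr)^p = D_{p+2}(1,x),$$
the last equality using $z^p = z$ for $z \in \mathbb{F}_p$. Hence $D_{2p+1}$ and $D_{p+2}$ induce the same function on $\mathbb{F}_p$, and therefore share the same $\tfrac{p+1}{2}$ fixed points. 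I do not foresee a serious obstacle: the only bookkeeping subtlety is to verify that the fixed point $a = 1/4$ (arising from $b = 0$) is not double-counted among the QNR contributions, which is immediate since $1 - 4\cdot \tfrac14 = 0$ is not a QNR.
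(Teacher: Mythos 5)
Your proposal is correct, and for two of the three parts it coincides with the paper's argument: for $D_{p+2}(1,x)$ you make the same substitution $b=1-4a$ (the paper's $d=1-4c$), factor the fixed-point condition as $b\,(b^{(p-1)/2}+1)=0$, and count $1+\frac{p-1}{2}$ solutions; for $D_{2p+1}(1,x)$ you, like the paper, reduce to $D_{p+2}(1,x)$ via the identity $D_{2p+1}(1,x)=D_{p+2}(1,x)$ on $\mathbb{F}_p$, except that you actually justify this identity through the cyclotomic-coset computation $(p+2)p\equiv 2p+1\pmod{p^2-1}$ and Lemma~\ref{LJ1}, whereas the paper simply asserts it (your derivation of the closed form for $D_{p+2}$ from $D_p(1,x)=1$ and the recursion, rather than citing \cite[Prop.~5.1]{HMSY}, is a similarly harmless variation). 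Where you genuinely diverge is the $D_{p+1}(1,x)$ part: the paper takes two fixed points $a,b$, divides the two equations $(1-4a)^{(p+1)/2}=2a-1$ and $(1-4b)^{(p+1)/2}=2b-1$, and case-splits on whether $\frac{1-4a}{1-4b}$ is a quadratic residue to conclude $a=b$; you instead recycle the squaring trick from the preceding ($p\equiv 5\pmod{12}$) theorem, obtaining $(b-1)(3b+1)=0$, discarding $b=1$, and then verifying directly that $a=\frac13$ is a fixed point because $-\frac13$ is a quadratic residue when $p\equiv 1\pmod{12}$. Your route is arguably the stronger one here: the paper's division argument only shows that any two fixed points coincide (i.e.\ at most one), leaving existence implicit, while your argument exhibits the unique fixed point $\frac13$ explicitly and so delivers ``exactly one'' in full; the paper's version, in exchange, avoids the explicit Legendre-symbol verification and runs uniformly without naming the fixed point.
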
 

\begin{proof}
We first consider the permutation polynomial $D_{p+1}(1,x)$. From \cite[Proposition 5.1]{HMSY}, we have $$D_{p+1}(1,x)=\frac{1}{2}+\frac{1}{2}(1-4x)^{\frac{p+1}{2}}.$$ 
Assume $a,b$ are fixed points of $D_{p+1}(1,x)$ where $a,b\neq0$ since $D_{p+1}(1,0)=1.$  Then, we have 
\begin{equation}\label{EJ61}
a=D_{p+1}(1,a)=\frac{1}{2}+\frac{1}{2}(1-4a)^{\frac{p+1}{2}}
\end{equation} 
and 
\begin{equation}\label{EJ62}
b=D_{p+1}(1,b)=\frac{1}{2}+\frac{1}{2}(1-4b)^{\frac{p+1}{2}}.
\end{equation}

From \eqref{EJ61} and \eqref{EJ62}, we have 
$$(1-4a)^{\frac{p+1}{2}}=2a-1$$
and
$$(1-4b)^{\frac{p+1}{2}}=2b-1,$$
respectively. Divide one by the other to obtain 
$$\Big(\frac{1-4a}{1-4b}\Big)^{\frac{p+1}{2}}=\frac{2a-1}{2b-1},$$
which implies
\begin{equation}\label{EJ63}
\Big(\frac{1-4a}{1-4b}\Big)^{\frac{p-1}{2}}\cdot \Big(\frac{1-4a}{1-4b}\Big)=\frac{2a-1}{2b-1}.
\end{equation}

Note that $a,b\neq \frac{1}{2}$ because $$D_{p+1}\Big(1,\frac{1}{2}\Big)=\frac{1}{2}+\frac{1}{2}(1-2)^{\frac{p+1}{2}}=\frac{1}{2}+\frac{1}{2}(-1)^{\frac{p+1}{2}}\neq \frac{1}{2},$$
and $a,b\neq \frac{1}{4}$ because
$$D_{p+1}\Big(1,\frac{1}{4}\Big)=\frac{1}{2}+\frac{1}{2}(1-1)^{\frac{p+1}{2}}=\frac{1}{2}\neq\frac{1}{4}.$$
If $\frac{1-4a}{1-4b}$ is a quadratic residue, then from \eqref{EJ63} we have $\frac{1-4a}{1-4b}=\frac{2a-1}{2b-1},$ which implies $a=b$. If $\frac{1-4a}{1-4b}$ is a quadratic nonresidue, then from \eqref{EJ63} we have $\frac{1-4a}{1-4b}=\frac{1-2a}{1-2b},$ which implies $a=b$. Thus the permutation polynomial $D_{p+1}(1,x)$ has exactly one fixed point. 

Let $c$ be a fixed point of $D_{p+2}(1,x)$. We show that there are $\frac{p+1}{2}$ choices for $c$. From \cite[Proposition 5.1]{HMSY}, we have 
$$D_{p+2}(1,x)= \frac{1}{2}+\frac{1}{2}(1-4x)^\frac{p+1}{2}-x.$$
Clearly, $c\neq0$ since $D_{p+2}(1,0)\neq0$. Since $c$ is a fixed point, we have $$c=D_{p+2}(1,c)=\frac{1}{2}(1-4c)^{\frac{p+1}{2}}+\frac{1}{2}-c,$$
which implies 
\begin{equation}\label{EJ64}
4c-1=(1-4c)^{\frac{p+1}{2}}.
\end{equation}

Let $d=1-4c$. Then \eqref{EJ64} becomes 
\begin{equation}\label{EJ65}
d\,(d^{\frac{p-1}{2}}+1)=0.
\end{equation}

Note that $d=0$, i.e. $c=\frac{1}{4}$, is a fixed of the polynomial $D_{p+2}(1,x)$. We show that there are $\frac{p-1}{2}$ more choices for $c$. If $d\neq 0$, from \eqref{EJ65}, we have
$$d^{\frac{p-1}{2}}=-1,$$
and this means $d$ is a quadratic nonresidue in $\mathbb{F}_p$. Therefore, there are $\frac{p-1}{2}$ more choices for $d$, that is, there are $\frac{p-1}{2}$ more choices for $c$. The rest of the proof follows from the fact that there are altogether $\frac{p+1}{2}$ choices for $c$, and $D_{p+2}(1,x)=D_{2p+1}(1,x)$ for all $x\in \mathbb{F}_p$. 
\end{proof} 

\begin{thm}\label{TTTTJ6}
Let $p\equiv 7\pmod{12}$. Then the reversed Dickson permutation polynomials $D_{p+2}(1,x)$ and $D_{2p+1}(1,x)$ have exactly $\frac{p+1}{2}$ fixed points. 
\end{thm}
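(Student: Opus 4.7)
The proof will closely mirror the argument given for the $D_{p+2}(1,x)$ and $D_{2p+1}(1,x)$ parts of Theorem~\ref{TTJ6}. The key observation is that the counting argument for fixed points of $D_{p+2}(1,x)$ used only that $p$ is an odd prime, not the specific residue $p \equiv 1 \pmod{12}$; hence it transfers verbatim to the case $p \equiv 7 \pmod{12}$.

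The plan is to start from the closed form $D_{p+2}(1,x) = \tfrac{1}{2} + \tfrac{1}{2}(1-4x)^{(p+1)/2} - x$ given by \cite[Proposition 5.1]{HMSY}. Setting $D_{p+2}(1,c) = c$ and rearranging yields
\[
(1-4c)^{(p+1)/2} = 4c - 1.
\]
With the substitution $d = 1-4c$ (a bijection between choices of $c$ and $d$), this becomes $d^{(p+1)/2} = -d$, i.e.
\[
d\bigl(d^{(p-1)/2} + 1\bigr) = 0.
\]
The case $d = 0$ gives the single fixed point $c = \tfrac{1}{4}$. The case $d \neq 0$ forces $d^{(p-1)/2} = -1$, so by Euler's criterion $d$ ranges over the quadratic nonresidues of $\mathbb{F}_p$, of which there are exactly $\tfrac{p-1}{2}$. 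Each such $d$ gives a distinct fixed point $c = (1-d)/4$. Summing, $D_{p+2}(1,x)$ has exactly $1 + \tfrac{p-1}{2} = \tfrac{p+1}{2}$ fixed points.

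To transfer the count to $D_{2p+1}(1,x)$, I would invoke Lemma~\ref{LJ1}. From (i), $D_{p(p+2)}(1,x) = \bigl(D_{p+2}(1,x)\bigr)^p$, and since $D_{p+2}(1,x) \in \mathbb{F}_p$ for $x \in \mathbb{F}_p$, this equals $D_{p+2}(1,x)$. On the other hand $p(p+2) = (p^2-1) + (2p+1) \equiv 2p+1 \pmod{p^2-1}$, so by (ii), $D_{p(p+2)}(1,x) = D_{2p+1}(1,x)$ for all $x \in \mathbb{F}_p$. Hence $D_{p+2}(1,x) = D_{2p+1}(1,x)$ pointwise on $\mathbb{F}_p$ and the fixed-point counts coincide.

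There is no real obstacle here, since the counting is purely algebraic and blind to the residue class of $p$ modulo $12$. The only mild point to verify is that the hypothesis $p \equiv 7 \pmod{12}$ (which in particular gives $p \geq 7 > 3$) is enough to ensure that $D_{p+2}(1,x)$ is actually among the PPs listed in Conjecture~\ref{CCC1} and that the formula from \cite[Proposition 5.1]{HMSY} applies; both are immediate.
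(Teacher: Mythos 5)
Your proposal is correct and follows essentially the same route as the paper, which simply refers back to the counting argument in Theorem~\ref{TTJ6}: the equation $d\,(d^{\frac{p-1}{2}}+1)=0$ with $d=1-4c$ gives the fixed point $c=\frac{1}{4}$ plus one fixed point for each of the $\frac{p-1}{2}$ quadratic nonresidues, and the count transfers to $D_{2p+1}(1,x)$ since $D_{p+2}(1,x)=D_{2p+1}(1,x)$ on $\mathbb{F}_p$. Your explicit justification of that last identity via Lemma~\ref{LJ1} is a nice touch but not a different method.
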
 

\begin{proof}
The proof is similar to that of Theorem~\ref{TTJ6}. 
\end{proof} 

\begin{thm}\label{TT1}
Let $p>3$ be a prime and $j\in\mathbb{Z}^+$ such that $j\ |\ p-1.$ Then, the cycle type of the permutation polynomials $D_2(1,x)$ and $D_{2p}(1,x)$ is $(\underbrace{\frac{p-1}{j},...,\frac{p-1}{j}}_{j\rm\ times},1),$ where $\text{ord}_p(-2)=\frac{p-1}{j}.$ In particular, if $-2$ is a primitive root modulo $p$, i.e. $j=1,$ then the cycle type of the permutation polynomials $D_2(1,x)$ and $D_{2p}(1,x)$ is $(p-1,1).$
\end{thm}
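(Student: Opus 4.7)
The plan is to first identify the two polynomials explicitly, then reduce the problem to the elementary dynamics of a linear map over $\mathbb{F}_p$. Using the recurrence of RDPs, I would compute $D_2(1,x) = 1\cdot D_1(1,x) - x\cdot D_0(1,x) = 1 - 2x$, and using Lemma~\ref{LJ1}(i) together with Fermat's little theorem I would observe that $D_{2p}(1,x) = \bigl(D_2(1,x)\bigr)^p = (1-2x)^p$ acts on $\mathbb{F}_p$ exactly as $f(x):=1-2x$. So both polynomials induce the same permutation of $\mathbb{F}_p$, namely the affine map $f(x) = -2x+1$, and it suffices to determine the cycle type of this single map.

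Next I would pin down the unique fixed point of $f$: solving $-2x+1 = x$ gives $3x = 1$, and since $p>3$ we have $x_0 = 1/3 \in \mathbb{F}_p$ as the only fixed point (this matches Proposition~\ref{PJ61}). Then I would iterate $f$ in closed form. By a standard telescoping argument for affine maps,
\begin{equation*}
f^k(x) = (-2)^k\, x + \bigl(1 + (-2) + (-2)^2 + \cdots + (-2)^{k-1}\bigr) = (-2)^k\, x + \frac{1-(-2)^k}{3},
\end{equation*}
which rearranges to the clean identity
\begin{equation*}
f^k(x) - x \;=\; \bigl((-2)^k - 1\bigr)\!\left(x - \tfrac{1}{3}\right).
\end{equation*}

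From this identity the cycle structure falls out immediately. For any $x\neq 1/3$, the condition $f^k(x) = x$ is equivalent to $(-2)^k \equiv 1 \pmod{p}$, and the least such positive $k$ is by definition $\operatorname{ord}_p(-2) = (p-1)/j$. Thus every non-fixed element lies in a cycle of length exactly $(p-1)/j$; since there are $p-1$ such elements, they partition into exactly $\dfrac{p-1}{(p-1)/j} = j$ cycles of that common length. Together with the single fixed point $1/3$, this yields the cycle type $\bigl(\underbrace{(p-1)/j,\ldots,(p-1)/j}_{j\text{ times}},\,1\bigr)$, and specializing to $j=1$ when $-2$ is a primitive root modulo $p$ gives the stated cycle type $(p-1,1)$.

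There is no serious obstacle here: once the reduction to the affine map $f(x) = -2x+1$ is made, the argument is entirely elementary linear dynamics. The only mildly nontrivial step is recognizing that $D_{2p}(1,x)$ and $D_2(1,x)$ induce the same function on $\mathbb{F}_p$, which relies on Lemma~\ref{LJ1}(i) (or equivalently on Lemma~\ref{LJ1}(ii) via $2p \equiv 2 \pmod{p^2-1}$ being false in general, so one really uses the Frobenius identity and Fermat's little theorem).
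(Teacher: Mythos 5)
Your proposal is correct and follows essentially the same route as the paper: both identify $D_2(1,x)$ and $D_{2p}(1,x)$ with the affine map $f(x)=1-2x$, locate the unique fixed point $\tfrac13$, and compute the closed-form iterate $f^{(k)}(x)=(-2)^k x+\tfrac{1-(-2)^k}{3}$ so that the cycle lengths are governed by $\operatorname{ord}_p(-2)$. Your identity $f^{(k)}(x)-x=((-2)^k-1)(x-\tfrac13)$ is a slightly cleaner way to see that every non-fixed point has period exactly $\tfrac{p-1}{j}$, but the argument is the same as the paper's.
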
 

\begin{proof}
Let $p>3$ be prime and $j\in\mathbb{Z}^+$ such that $j\vert p-1.$ Assume that $\text{ord}_p(-2)=\frac{p-1}{j}$. Since $D_{2}(1,x)=D_{2p}(1,x)$ for all $x\in \mathbb{F}_p$, we have $$D_2(1,x)=D_{2p}(1,x)=1+(p-2)x,$$
which has exactly one fixed point by Proposition~\ref{PJ61}, and the fixed point is $\frac{1}{3}$. 
Let $f(x)=1+(p-2)x$, and $f^{(n)}$ be the $n$th iterate of the polynomial $f(x)$. Let $a\in \mathbb{F}_p\setminus \{\frac{1}{3}\}$. Then we have 
$$f^{(n)}(a)=1+(p-2)+(p-2)^2+...+(p-2)^{n-1}+(p-2)^n\,a,$$
which implies
$$f^{(n)}(a)=\frac{1-(-2)^n}{1-(-2)}+(-2)^n\,a=\frac{1-(-2)^n}{3}+(-2)^n\,a.$$
Since $\text{ord}_p(-2)=\frac{p-1}{j}$, $f^{(\frac{p-1}{j})}(a)=a$, and $f^{(k)}\neq a$ for any $k<\frac{p-1}{j}$, where $k\,\vert \,p-1.$

We have shown that if there is a cycle of length greater than 1, then it must be of length $\frac{p-1}{j}$. From Proposition~\ref{PJ61}, we know that $f(x)$ has exactly one fixed point, and the fixed point is $\frac{1}{3}$. Therefore, the cycle type of the permutation polynomials $D_2(1,x)$ and $D_{2p}(1,x)$ is $(\underbrace{\frac{p-1}{j},...,\frac{p-1}{j}}_{j\rm\ times},1).$

In particular, if $-2$ is a primitive root modulo $p$, then $j=1.$
\end{proof}

\begin{thm}\label{TJ7T1}
Let $p=3.$ Then the cycle type of the permutation polynomials $D_{2}(1,x)$ and $D_{2p}(1,x)$ is $(3)$.
\end{thm}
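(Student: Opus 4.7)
The plan is to reduce everything to a single direct computation on the three-element field $\mathbb{F}_3 = \{0,1,2\}$. Since $p=3$ is so small, I would not try to adapt the iteration/order argument used in Theorem~\ref{TT1} (which breaks down because $-2 \equiv 1 \pmod{3}$, making $\mathrm{ord}_3(-2)=1$ and the formula $(p-1)/j$ degenerate), and instead just exhibit the cycle.

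First, I would compute $D_2(1,x)$ from the recurrence $D_0(1,x)=2$, $D_1(1,x)=1$, $D_2(1,x)=D_1(1,x)-xD_0(1,x)=1-2x$. Reducing modulo $3$ gives $D_2(1,x)=1+x$ in $\mathbb{F}_3[x]$. Next, to handle $D_{2p}(1,x)=D_6(1,x)$, I would invoke Lemma~\ref{LJ1}(i), which yields
\[
D_6(1,x) = D_{2\cdot 3}(1,x) = \bigl(D_2(1,x)\bigr)^3 = (1+x)^3 \quad \text{in } \mathbb{F}_3[x].
\]
As a function on $\mathbb{F}_3$, Fermat's little theorem gives $(1+x)^3 = 1+x$ for every $x\in\mathbb{F}_3$, so $D_2(1,x)$ and $D_{2p}(1,x)$ induce the same mapping on $\mathbb{F}_3$.

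Finally, I would read off the cycle structure of $f(x)=1+x$ on $\mathbb{F}_3$ by direct evaluation: $f(0)=1$, $f(1)=2$, $f(2)=0$, so $0\mapsto 1\mapsto 2\mapsto 0$ is a single $3$-cycle. Hence the cycle type is $(3)$, as claimed.

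There is no real obstacle here; the only subtlety is noticing that Theorem~\ref{TT1}'s hypothesis $p>3$ is genuinely needed (because $-2$ has order $1$ modulo $3$), which is exactly why this case is separated into its own theorem and treated by a one-line computation rather than by the general argument.
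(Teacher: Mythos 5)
Your proposal is correct and follows essentially the same route as the paper: both reduce $D_2(1,x)$ and $D_{2p}(1,x)$ to the single map $f(x)=1+x$ on $\mathbb{F}_3$ and then verify its cycle structure directly (you exhibit the $3$-cycle $0\mapsto 1\mapsto 2\mapsto 0$, while the paper instead rules out fixed points and $2$-cycles, a trivial difference). Your explicit appeal to Lemma~\ref{LJ1}(i) together with Fermat's little theorem to justify $D_{2p}(1,x)=D_2(1,x)$ as functions on $\mathbb{F}_3$ is in fact a welcome clarification, since the congruence $2\equiv 2p \pmod{p^2-1}$ used elsewhere does not hold for $p=3$.
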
 

\begin{proof}
Let $p=3$, Since $D_{2}(1,x)=D_{2p}(1,x)$ for all $x\in \mathbb{F}_p$, we have $$D_2(1,x)=D_{2p}(1,x)=1+(p-2)x=1+x,$$
which has no fixed points. The proof follows from the fact that $(f\circ f)(x)\neq x$ for all $x\in \mathbb{F}_3$. 
\end{proof}

\begin{thm}
Let $p>3$ be a prime and $j\in\mathbb{Z}^+$ such that $j\,\vert \,p-1$. Then the cycle type of the permutation polynomials $D_3(1,x)$ and $D_{3p}(1,x)$ is $(\underbrace{\frac{p-1}{j},...,\frac{p-1}{j}}_{j\rm\ times},1)$ where $\text{ord}_p(-3)=\frac{p-1}{j}.$ In particular, if $-3$ is a primitive root modulo $p,$ then the cycle type of the permutation polynomials $D_3(1,x)$ and $D_{3p}(1,x)$ is $(p-1,1).$
\end{thm}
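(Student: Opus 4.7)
The plan is to mirror the structure of Theorem~\ref{TT1} almost verbatim, with $-2$ replaced by $-3$. First I would compute $D_3(1,x)$ explicitly. Using the recurrence (or the explicit sum formula), one obtains $D_3(1,x) = 1\cdot D_2(1,x) - x\cdot D_1(1,x) = (1-2x)-x = 1-3x$. Then by Lemma~\ref{LJ1}(i) and the Frobenius identity,
\[
D_{3p}(1,x) = \bigl(D_3(1,x)\bigr)^p = (1-3x)^p = 1 - 3x^p \quad \text{in } \mathbb{F}_p[x],
\]
so Fermat's little theorem gives $D_{3p}(1,a) = 1 - 3a = D_3(1,a)$ for every $a\in\mathbb{F}_p$. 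Thus the two polynomials induce the same permutation of $\mathbb{F}_p$, and it suffices to analyse the affine map $f(x) = 1-3x$.

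Next, by Proposition~\ref{PPJ61} the unique fixed point of $f$ is the solution of $x = 1 - 3x$, namely $x_0 = 1/4$, which is a valid element of $\mathbb{F}_p$ since the hypothesis $p>3$ makes $4$ invertible. For any $a\in \mathbb{F}_p\setminus\{1/4\}$, I would establish the closed form of the $n$th iterate,
\[
f^{(n)}(a) = 1 + (-3) + (-3)^2 + \cdots + (-3)^{n-1} + (-3)^n a = \frac{1-(-3)^n}{4} + (-3)^n\,a,
\]
either by a short induction on $n$ or by recognising a geometric sum; the denominator $1-(-3)=4$ is again invertible because $p>3$. Rearranging, the equation $f^{(n)}(a) = a$ becomes $\bigl(1-(-3)^n\bigr)\bigl(\tfrac14 - a\bigr)=0$, so for $a\ne 1/4$ it is equivalent to $(-3)^n\equiv 1\pmod p$, i.e.\ to $\mathrm{ord}_p(-3) = (p-1)/j$ dividing $n$.

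This immediately yields the theorem: each of the $p-1$ non-fixed points lies on an $f$-orbit whose length is exactly $(p-1)/j$, and they therefore partition into $j$ such cycles, giving the cycle type
\[
\bigl(\underbrace{\tfrac{p-1}{j},\ldots,\tfrac{p-1}{j}}_{j\text{ times}},\,1\bigr),
\]
with the ``in particular'' statement being the case $j=1$. I do not expect any real obstacle; the two bookkeeping points to confirm are that $D_{3p}(1,x)=D_3(1,x)$ as functions on $\mathbb{F}_p$ (handled by Lemma~\ref{LJ1}(i) plus Fermat) and that $4$ is invertible in $\mathbb{F}_p$, both of which rely only on the standing hypothesis $p>3$.
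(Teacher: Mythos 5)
Your proposal is correct and follows essentially the same route as the paper's proof: reduce both $D_3(1,x)$ and $D_{3p}(1,x)$ to the affine map $f(x)=1-3x$, identify the unique fixed point $\tfrac{1}{4}$ via Proposition on the linear case, compute the iterates as a geometric sum, and read off the cycle lengths from $\mathrm{ord}_p(-3)=\tfrac{p-1}{j}$. Incidentally, your closed form $f^{(n)}(a)=\tfrac{1-(-3)^n}{4}+(-3)^n a$ has the correct denominator $1-(-3)=4$, whereas the paper's displayed formula contains a typo with $3$ in that denominator.
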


\begin{proof}
Let $p>3$ be prime and $j\in\mathbb{Z}^+$ such that $j\,\vert \,p-1.$ Assume that $\text{ord}_p(-3)=\frac{p-1}{j}$. Since $D_{3}(1,x)=D_{3p}(1,x)$ for all $x\in \mathbb{F}_p$, we $$D_3(1,x)=D_{3p}(1,x)=1+(p-3)x,$$ 
which has exactly one fixed point by Proposition~\ref{PPJ61}, and the fixed point is $\frac{1}{4}$. 
Let $f(x)=1+(p-3)x$, and $f^{(n)}$ be the $n$th iterate of the polynomial $f(x)$. Let $a\in \mathbb{F}_p\setminus \{\frac{1}{4}\}$. Then we have  
$$f^{(n)}(a)=1+(p-3)+(p-3)^2+...+(p-3)^{n-1}+(p-3)^n\,a,$$
which leads to 
$$f^{(n)}(a)=\frac{1-(-3)^n}{1-(-3)}+(-3)^n\,a=\frac{1-(-3)^n}{3}+(-3)^n\,a.$$
The rest of the proof is similar to that of Theorem~\ref{TT1}. 
\end{proof}

\begin{thm}\label{TJ7T2}
Let $p \geq 3$. Then the cycle type of the polynomial $D_2(1,x)+x$ is $(\underbrace{2,...,2}_{\frac{p-1}{2}\rm times},1)$.
\end{thm}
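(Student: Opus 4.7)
The plan is to reduce the statement to a direct calculation with the linear polynomial $f(x)=1-x$ and then observe that it is an involution.

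First, I would compute $D_2(1,x)$ explicitly from the recurrence
\[
D_0(1,x)=2,\quad D_1(1,x)=1,\quad D_n(1,x)=D_{n-1}(1,x)-x\,D_{n-2}(1,x),
\]
which gives $D_2(1,x)=1-2x$, and hence
\[
f(x):=D_2(1,x)+x=1-x.
\]
This polynomial is manifestly a PP of $\mathbb{F}_p$, so it has a well-defined cycle type.

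Next I would locate the fixed points. Solving $f(x)=x$, i.e.\ $1-x=x$, yields the unique solution $x=\tfrac{1}{2}$, which exists in $\mathbb{F}_p$ since $p\geq 3$ is odd. So $f$ has exactly one fixed point.

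Finally, I would show that every other point lies in a $2$-cycle. For any $a\in\mathbb{F}_p$,
\[
f\bigl(f(a)\bigr)=f(1-a)=1-(1-a)=a,
\]
so $f$ is an involution. Therefore each $a\neq\tfrac{1}{2}$ is paired with $1-a\neq a$ into a transposition $\{a,1-a\}$. The remaining $p-1$ elements thus partition into exactly $\tfrac{p-1}{2}$ disjoint $2$-cycles, yielding the cycle type $(\underbrace{2,\ldots,2}_{\frac{p-1}{2}\text{ times}},1)$.

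There is no real obstacle here: once the reduction $D_2(1,x)+x=1-x$ is made, the argument is just the standard observation that a non-identity affine involution over $\mathbb{F}_p$ ($p$ odd) has one fixed point and acts on the rest by transpositions.
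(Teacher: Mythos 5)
Your proposal is correct and follows essentially the same route as the paper: identify $D_2(1,x)+x=1-x$, note the unique fixed point $\tfrac{1}{2}$, and use the involution property $(1-x)\circ(1-x)=x$ to conclude the remaining $p-1$ elements split into $\tfrac{p-1}{2}$ transpositions. Your version just makes the involution/transposition counting slightly more explicit than the paper's brief argument.
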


\begin{proof}
Let $p \geq 3$. Since $D_2(1,x)+x=1+(p-1)x$, it has exactly one fixed point by Lemma~\ref{LJ61}, and the fixed point is $\frac{1}{2}$. The proof follows from the fact that $(1-x)\circ (1-x)=x$ for all $\mathbb{F}_p\setminus \{\frac{1}{2}\}$. 
\end{proof}

\begin{thm}
Let $p \equiv 1 \pmod{12}$ or $p \equiv 7 \pmod{12}$.\\
Let $j \in \mathbb{Z}^+$ such that $j\vert p-1$. Then the permutation polynomials $D_{p+2}(1,x)$ and $D_{2p+1}(1,x)$ has the cycle type $(\underbrace{\frac{p-1}{j},...,\frac{p-1}{j}}_{\frac{j}{2}\rm\ times},\underbrace{1,...,1}_{\frac{p+1}{2}\rm times})$ whenever ord$_p(-3) = \frac{p-1}{j}$.
\end{thm}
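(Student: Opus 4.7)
The plan is to exploit the explicit formula from \cite[Proposition 5.1]{HMSY}, namely
$$D_{p+2}(1,x)=\tfrac{1}{2}+\tfrac{1}{2}(1-4x)^{(p+1)/2}-x,$$
and the already-established identity $D_{p+2}(1,x)=D_{2p+1}(1,x)$. I would partition $\mathbb{F}_p$ into the fixed-point set $F$ and its complement $S$. By Theorem~\ref{TTJ6} (or Theorem~\ref{TTTTJ6}), $|F|=\frac{p+1}{2}$, and from those proofs $F$ consists of $\frac{1}{4}$ together with all $c$ for which $1-4c$ is a nonzero quadratic nonresidue, so $S=\{x\in\mathbb{F}_p:1-4x\text{ is a nonzero QR}\}$ and $|S|=\frac{p-1}{2}$.

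Next, for $x\in S$ the exponent simplifies: $(1-4x)^{(p+1)/2}=(1-4x)\cdot(1-4x)^{(p-1)/2}=1-4x$ by Euler's criterion, so $D_{p+2}(1,x)=1-3x$. I would then verify that $f(x):=1-3x$ maps $S$ into itself: a short calculation gives $1-4f(x)=-3(1-4x)$, so I need $-3$ to be a quadratic residue mod $p$. Using the Legendre-symbol identities from Section~\ref{S2}, $\bigl(\tfrac{-3}{p}\bigr)=\bigl(\tfrac{-1}{p}\bigr)\bigl(\tfrac{3}{p}\bigr)=+1$ in both congruence classes $p\equiv 1,7\pmod{12}$, so $f$ restricts to a permutation of $S$. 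As an immediate consequence, $\mathrm{ord}_p(-3)$ divides $\frac{p-1}{2}$, which forces $j$ to be even; thus $\frac{j}{2}$ is a genuine integer.

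With the dynamics on $S$ reduced to iterating the affine map $f(x)=1-3x$, I would compute
$$f^{(n)}(x)=\frac{1-(-3)^n}{4}+(-3)^n x,$$
and observe that $f$ has a unique fixed point in $\mathbb{F}_p$, namely $\tfrac{1}{4}$, which lies in $F$, not in $S$. Hence for $x\in S$ we have $f^{(n)}(x)=x$ if and only if $(-3)^n=1$, i.e.\ $\frac{p-1}{j}\mid n$. Therefore every orbit of $f$ on $S$ has length exactly $\frac{p-1}{j}$, producing $\frac{|S|}{(p-1)/j}=\frac{j}{2}$ cycles of that length, plus the $\frac{p+1}{2}$ fixed points of $F$, matching the asserted cycle type.

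The main obstacle is a small bookkeeping one rather than a deep one: one must confirm that $F$ and $S$ are actually invariant (trivial for $F$, requires the $-3$ QR computation for $S$) and that the shifts in the affine iteration never accidentally land on $\tfrac{1}{4}$ mid-orbit. The latter is automatic since $\tfrac14\notin S$ and $f(S)\subseteq S$. Everything else is a direct calculation, so the proof should come out cleanly in a few lines once these points are set up.
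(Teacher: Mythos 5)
Your argument is correct and follows essentially the same route as the paper: the same partition into the $\frac{p+1}{2}$ fixed points and the set where $1-4x$ is a nonzero square, the same reduction of the map on that set to the affine polynomial $1-3x$, and the same iterate formula $f^{(n)}(x)=\frac{1-(-3)^n}{4}+(-3)^n x$ giving $\frac{j}{2}$ cycles of length $\frac{p-1}{j}$. The only difference is that you spell out details the paper defers to its earlier theorems — the invariance of $S$ via $\left(\frac{-3}{p}\right)=1$ (which also justifies that $j$ is even) and the exact-order argument for the cycle lengths — which is a welcome tightening rather than a new approach.
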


\begin{proof}
Note that $$D_{p+2}(1,x)=D_{2p+1}(1,x)=\frac{1}{2}(1 - 4x)^\frac{p+1}{2} + \frac{1}{2} - x$$ for all $x\in \mathbb{F}_p$. By Theorem~\ref{TTJ6} and Theorem~\ref{TTTTJ6}, we know that $D_{p+2}(1,x)$ has $\frac{p+1}{2}$ fixed points. 

Let $a$ be a fixed point. Then 

$$a=D_{p+2}(1,a)=\frac{1}{2}(1 - 4a)^\frac{p+1}{2} + \frac{1}{2} - a.$$ 

We have

$$a=\frac{1}{2}(1 - 4a)^\frac{p+1}{2} + \frac{1}{2} - a \iff (1-4a)\,((1-4a)^{\frac{p-1}{2}}+1)=0,$$

which implies

$$a=\frac{1}{2}(1 - 4a)^\frac{p+1}{2} + \frac{1}{2} - a \iff a=\frac{1}{4}\,\,\text{or}\,\,(1-4a)^{\frac{p-1}{2}}=-1.$$

This means, 

$$a\,\,\text{is a fixed point of}\,\,D_{p+2}(1,x)\iff a=\frac{1}{4}\,\,\text{or}\,\,(1-4a)^{\frac{p-1}{2}}=-1.$$

In other words, 

$$a\,\,\text{is a fixed point of}\,\,D_{p+2}(1,x)\iff a=\frac{1}{4}=\Big(\frac{1}{2}\Big)^2\,\,\text{or}\,\,(1-4a)\,\,\text{is a quadratic nonresidue}.$$

Define 

$$A:=\Big\{a\in \mathbb{F}_p\,\,\Big |\,\,a=\frac{(1-\alpha)(1+\alpha)}{4},\,\,\alpha\in \mathbb{F}_p^{\times}\Big\}.$$

Clearly, the sets $A$, $\{\frac{1}{4}\}$, and the set of all fixed points form a partition of $\mathbb{F}_p$, and the cardinality of $A$ is $\frac{p-1}{2}$. 

Let $a\in A$. Then we have 
$$D_{p+2}(1,a) = \frac{1}{2} (1-4a)^\frac{p+1}{2} - a + \frac{1}{2}=1-3a.$$

Let $f(a) = 1-3a$. Then
$$f^{(n)}(a) = \frac{1-(-3)^n}{4} + (-3)^n a.$$

The rest of the proof is similar to that of Theorem~\ref{TT1}. 

\end{proof} 


\subsection{The case $a=0$}\label{FKA} 

In this subsection, we consider the case $a=0$. The permutation behaviour of $D_n(0,x)$ was explained in \cite[Theorem~2.1]{NF}. Let $a=0$ in the explicit expressions for $D_n(1,x)$ and $D_n^{\prime}(1,x)$ to obtain

\[
D_n(0,x) = \left\{
\begin{alignedat}{2}
  & 0 & \quad & \text{if}\ n\,\,\text{is odd},\\
  & 2(-x)^l  & \quad & \text{if}\ n=2l.\\
\end{alignedat}
\right.
\]

and

\[
D_n^{\prime}(0,x) = \left\{
\begin{alignedat}{2}
  & 0 & \quad & \text{if}\ n\,\,\text{is odd},\\
  & -2l(-x)^{l-1} & \quad & \text{if}\ n=2l.\\
\end{alignedat}
\right.
\]

The following proposition is an immediate result of the fact that $D_n(0,x)=0$ in even characteristic. 

\begin{prop}
$D_n(0,x)$ is never a permutation polynomial over $\mathbb{Z}_{2^t}$. 
\end{prop}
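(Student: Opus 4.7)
The plan is to read the conclusion directly off the explicit formula for $D_n(0,x)$ stated just before the proposition, splitting into the two parity cases for $n$.

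First I would handle the case $n$ odd. Here $D_n(0,x)$ is the zero polynomial, so the induced map on $\mathbb{Z}_{2^t}$ is the constant map $x\mapsto 0$. For any $t\ge 1$ we have $|\mathbb{Z}_{2^t}|\ge 2$, so this map is not injective and hence not a permutation.

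Next I would handle the case $n=2l$ even. Here $D_n(0,x)=2(-x)^l$, and every value of this expression lies in the ideal $2\mathbb{Z}_{2^t}$. Since $|2\mathbb{Z}_{2^t}|=2^{t-1}<2^t$ for $t\ge 1$ (the odd residues are never hit), the map fails to be surjective onto $\mathbb{Z}_{2^t}$, and again cannot be a permutation. Together the two cases cover all $n$, proving the proposition.

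There is essentially no main obstacle: the content of the proposition is completely captured by the explicit formula and the observation that in characteristic two the coefficient $2$ kills the polynomial. I note that I do not need Lemma~\ref{L2} here, since the obstruction shows up already at the level of the image of the map, not at the level of the derivative; invoking Lemma~\ref{L2} would in fact be insufficient in the odd-$n$ case because the derivative is also identically zero but the point is that the map itself is constant.
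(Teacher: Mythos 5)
Your proof is correct. It is in substance the same observation the paper makes: the paper disposes of the proposition in one line by noting that $D_n(0,x)=0$ in even characteristic, i.e.\ reduced mod $2$ the map is constant, hence not a PP of $\mathbb{Z}_2$, hence not a PP of $\mathbb{Z}_{2^t}$; you phrase the same fact directly over $\mathbb{Z}_{2^t}$, splitting into $n$ odd (identically zero) and $n=2l$ (image contained in $2\mathbb{Z}_{2^t}$). The two formulations are equivalent --- saying the image lies in $2\mathbb{Z}_{2^t}$ is exactly saying the reduction mod $2$ is the zero map --- so nothing of substance differs; your version just avoids quoting a reduction lemma. One small correction to your closing aside: Lemma~\ref{L2} would \emph{not} be insufficient in the odd-$n$ case. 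Since it is an ``if and only if,'' failure to be a PP mod $2$ already rules out being a PP mod $2^t$ for $t>1$ (and the case $t=1$ is the mod-$2$ statement itself); the derivative clause never needs to be checked once the mod-$p$ permutation condition fails. So the paper's route through reduction mod $2$ is fully adequate --- your direct image argument is simply an alternative way of saying the same thing.
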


The following proposition is an immediate result of the fact that $D_n^{\prime}(0,0)=0$ in odd characteristic whenever $n\neq 2$, and $D_2(0,x)=-2x$ and $D_2^{\prime}(0,x)=-2$. 

\begin{prop}
Let $p$ be odd. $D_n(0,x)$ is never a permutation polynomial over $\mathbb{Z}_{p^t}$ whenever $n\neq 2$. $D_2(0,x)$ is a permutation polynomial over $\mathbb{Z}_{p^t}$ for any $p$.
\end{prop}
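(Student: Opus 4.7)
The plan is a direct case split on the parity of $n$, invoking the two closed-form expressions for $D_n(0,x)$ and $D_n'(0,x)$ displayed just above the proposition together with Lemma~\ref{L2}.

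First I would dispatch the odd case. If $n$ is odd, the explicit formula gives $D_n(0,x) = 0$ as a polynomial, hence as a map. Since $|\mathbb{Z}_{p^t}| \geq p \geq 3 > 1$, the constant zero map is not a bijection, so $D_n(0,x)$ fails to be a PP on $\mathbb{Z}_{p^t}$ for every $t \geq 1$. This already covers all odd $n$ (in particular all odd $n \neq 2$).

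Next I would handle the non-trivial even case $n = 2l$ with $l \geq 2$. Here the derivative formula reads $D_n'(0,x) = -2l(-x)^{l-1}$. Evaluating at the integer $s = 0$ gives $D_n'(0,0) = -2l \cdot 0^{\,l-1} = 0$ in $\mathbb{F}_p$, since $l-1 \geq 1$. By Lemma~\ref{L2}, a polynomial can be a PP mod $p^t$ only if its derivative is a unit modulo $p$ at every integer; the vanishing of $D_n'(0,0)$ therefore rules out $D_n(0,x)$ being a PP on $\mathbb{Z}_{p^t}$ for any $t > 1$, which is the content of the proposition for these $n$.

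Finally I would verify the exceptional case $n = 2$. Here $D_2(0,x) = -2x$ with $D_2'(0,x) = -2$, and $-2$ is a unit in $\mathbb{F}_p$ because $p$ is odd. Hence $-2x$ is a PP on $\mathbb{F}_p$ and $D_2'(0,s) \equiv -2 \not\equiv 0 \pmod p$ for every integer $s$, so Lemma~\ref{L2} promotes it to a PP on $\mathbb{Z}_{p^t}$ for all $t \geq 2$; the case $t = 1$ is the same linear PP observation. There is no real obstacle to this proof: the proposition is a clean corollary of Lemma~\ref{L2} applied to the explicit formulas, and the only mild care needed is to separate the polynomial-level obstruction (odd $n$) from the derivative-level obstruction (even $n \geq 4$).
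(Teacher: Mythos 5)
Your proof is correct and takes essentially the same route as the paper, which justifies the proposition by exactly the facts you use: $D_n'(0,0)=0$ in odd characteristic for $n\neq 2$ (combined with the derivative criterion for permutation polynomials mod $p^t$), the zero polynomial for odd $n$, and $D_2(0,x)=-2x$ with constant unit derivative $-2$. The only cosmetic omission is the case $n=0$, where $D_0(0,x)=2$ is constant and is dispatched by the same observations.
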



\subsection{Permutation behaviour over $\mathbb{Z}_{2^t}$}

In this subsection, we explain the permutation behaviour of RDPs over $\mathbb{Z}_{2^t}$. To do so, we first need to consider the permutation behaviour of RDPs over $\mathbb{Z}_{2}$ which has been explained in \cite[Table 1]{Dobbertin} and \cite[Theorem 2.5]{HL}. We present it in the following lemma. 

\begin{lem}
$D_n(1,x)$ is a PP over $\mathbb{Z}_2$ if and only if $n>0$ and $n\equiv 0\pmod{3}$. 
\end{lem}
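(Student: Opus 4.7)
Since $\mathbb{Z}_2 = \mathbb{F}_2$ has only the two elements $0$ and $1$, the polynomial $D_n(1,x)$ is a PP over $\mathbb{Z}_2$ if and only if $D_n(1,0) \neq D_n(1,1)$ in $\mathbb{F}_2$. Accordingly, my plan is simply to evaluate the two sequences $\{D_n(1,0)\}_{n\geq 0}$ and $\{D_n(1,1)\}_{n\geq 0}$ using the recurrence
\[
D_0(1,x)=2,\quad D_1(1,x)=1,\quad D_n(1,x)=D_{n-1}(1,x)-xD_{n-2}(1,x)\quad(n\geq 2),
\]
and compare them modulo $2$.

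Specializing at $x=0$, the recurrence collapses to $D_n(1,0)=D_{n-1}(1,0)$ for all $n\geq 2$, so $D_n(1,0)=D_1(1,0)=1$ for every $n\geq 1$ (and $D_0(1,0)=2\equiv 0$). Specializing at $x=1$, the recurrence becomes $D_n(1,1)=D_{n-1}(1,1)-D_{n-2}(1,1)$. Over $\mathbb{F}_2$ the initial data are $D_0(1,1)=0$, $D_1(1,1)=1$, and a direct check of the first few terms gives $0,1,1,0,1,1,0,\ldots$, so I would formally establish by induction on $n$ that the sequence $\{D_n(1,1)\bmod 2\}$ is periodic with period $3$ and satisfies
\[
D_n(1,1)\equiv
\begin{cases}
0 \pmod{2} & \text{if } n\equiv 0\pmod{3},\\
1 \pmod{2} & \text{if } n\not\equiv 0\pmod{3}.
\end{cases}
\]

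Combining the two evaluations, for $n\geq 1$ we have $D_n(1,0)=1$, so $D_n(1,x)$ is a PP over $\mathbb{Z}_2$ precisely when $D_n(1,1)=0$, that is, precisely when $3\mid n$. For $n=0$ the polynomial $D_0(1,x)=2$ is identically zero on $\mathbb{F}_2$ and hence not a PP, which is why the hypothesis $n>0$ is needed. There is no genuine obstacle here; the only mildly nontrivial point is verifying the period-$3$ behaviour of $\{D_n(1,1)\}$, which is a one-line induction using the three-term recurrence over $\mathbb{F}_2$.
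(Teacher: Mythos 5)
Your proof is correct. The paper does not actually prove this lemma itself—it states it by citing Dobbertin's Table 1 and Hou--Ly's Theorem 2.5—but the data you use, namely $D_n(1,0)=1$ for all $n\geq 1$ (with $D_0(1,0)=2\equiv 0$) and the period-$3$ behaviour of $\{D_n(1,1)\bmod 2\}$ given by $0,1,1,0,1,1,\ldots$, are exactly the sequences the paper displays immediately after the lemma, so your direct two-point verification over $\mathbb{F}_2$ is essentially the same elementary argument, with the added benefit of making the statement self-contained.
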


\begin{rmk}
There are no CPPs over $\mathbb{Z}_2$ as $f(x)$ and $f(x)+x$ cannot be one-to-one on $\mathbb{Z}_2$ at the same time. 
\end{rmk}

From the recurrence relation 

$$D_0(1,x)=2, \,\,D_1(1,x)=1,\,\,D_n(1,x)=D_{n-1}(1,x)-xD_{n-2}(1,x)\,\,\text{for}\,\,n\geq 2,$$

we obtain the sequence $\{D_n(1,1)\}_{n=0}^{\infty}$ that also appeared in \cite{HMSY}. 

$$2, 1, -1, -2, -1, 1, 2, 1, -1, -2, -1, 1, 2, 1, -1, -2, \ldots$$

In modulo $2$, we get 

$$0, 1, 1, 0, 1, 1, 0, 1, 1, 0, 1, 1, 0, 1, 1, 0, 1, 1, 0, 1, 1, 0, \ldots ,$$

which is a sequence with period $3$. We have

\[
D_n(1,1) = \left\{
\begin{alignedat}{2}
  & 0 & \quad & \text{if}\ n\equiv 0 \pmod{3},\\
  & 1    & \quad & \text{if}\ n\equiv 1\,\,\textnormal{or}\,\,2\pmod{3}.\\
\end{alignedat}
\right.
\]

Since $D_0(1,0)=2$ and $D_1(1,0)=1$, we get the following sequence when $x=0$:

\[
D_n(1,0) = \left\{
\begin{alignedat}{2}
  & 0 & \quad & \text{if}\ n=0,\\
  & 1    & \quad & \text{if}\ n\geq 1.\\
\end{alignedat}
\right.
\]

Even though we state the following lemma in this subsection, it is true for any prime $p$. 

\begin{lem}\label{lll1}
Let $p$ be a prime and $n_1, n_2>1$. If $n_1\equiv n_2\pmod{p(p^2-1)}$, then $\displaystyle{D_{n_1}^{\prime}(1,x)=D_{n_2}^{\prime}(1,x)}.$
\end{lem}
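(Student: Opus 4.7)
The plan is to reduce the derivative identity to a statement about reversed Dickson polynomials of the second kind. Differentiating the explicit formula
$$D_n(1,x)=\sum_{i=0}^{\lfloor n/2\rfloor}\frac{n}{n-i}\binom{n-i}{i}(-x)^i$$
term by term, pulling out the scalar factor $-n$, and reindexing $i\mapsto j+1$ with the binomial simplification $\frac{j+1}{n-j-1}\binom{n-j-1}{j+1}=\binom{n-j-2}{j}$, I would obtain the key identity
$$D_n'(1,x)=-n\,E_{n-2}(1,x)\quad\text{in }\mathbb{Z}[x],$$
valid for every integer $n\geq 2$. With this in hand, proving the lemma amounts to showing that $n_1\,E_{n_1-2}(1,x)\equiv n_2\,E_{n_2-2}(1,x)$ in $\mathbb{F}_p$ for every $x\in\mathbb{F}_p$ whenever $n_1\equiv n_2\pmod{p(p^2-1)}$.

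Because $\gcd(p,p^2-1)=1$, the hypothesis splits by the Chinese Remainder Theorem into $n_1\equiv n_2\pmod{p}$ and $n_1\equiv n_2\pmod{p^2-1}$. The first congruence handles the scalar factors $n_i$ modulo $p$, so it remains to prove the functional equality $E_{n_1-2}(1,x)=E_{n_2-2}(1,x)$ on $\mathbb{F}_p$. I would prove this analogue of Lemma~\ref{LJ1}(ii) for $E_n(1,x)$ via the telescoping identity
$$E_m(1,x)=\frac{y^{m+1}-z^{m+1}}{y-z},$$
where $y,z\in\overline{\mathbb{F}_p}$ are the roots of $t^2-t+x=0$. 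For $x=0$ one has $E_m(1,0)=1$ for every $m$. For $x\in\mathbb{F}_p\setminus\{0,\tfrac{1}{4}\}$, the elements $y,z$ are distinct and nonzero in $\mathbb{F}_{p^2}$, so $y^{p^2-1}=z^{p^2-1}=1$ and the right-hand side is invariant under $m\mapsto m+(p^2-1)$.

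The main obstacle is the degenerate case $x=\tfrac{1}{4}$, where $y=z=\tfrac{1}{2}$ and the telescoping sum collapses to $E_m(1,\tfrac{1}{4})=(m+1)(\tfrac{1}{2})^m$. Periodicity in $m$ at this single point requires both $p$-periodicity (for the factor $m+1$) and $(p-1)$-periodicity (for the power $(\tfrac{1}{2})^m$), giving a period of $\mathrm{lcm}(p,p-1)=p(p-1)$. This divides $p(p^2-1)$ but \emph{not} $p^2-1$, and it is precisely this exceptional value of $x$ that forces the extra factor of $p$ in the modulus of the statement. Once $x=\tfrac{1}{4}$ is dispatched, combining the three cases yields $E_{n_1-2}(1,x)=E_{n_2-2}(1,x)$ on all of $\mathbb{F}_p$, which together with the scalar congruence $n_1\equiv n_2\pmod{p}$ completes the proof.
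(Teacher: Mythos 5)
Your proposal is correct, and it reaches the conclusion by a cleaner and in fact more complete route than the paper. The paper's proof works directly with the functional expression $D_n(1,x)=y^n+(1-y)^n$, $x=y(1-y)$, differentiates through the chain rule factor $\frac{dy}{dx}=\frac{-1}{2y-1}$, and concludes in one line from $y^{p^2-1}=1$ and $n_1\equiv n_2\pmod p$; it never addresses the point $x=\frac{1}{4}$ (for odd $p$), where $2y-1=0$ and that expression for the derivative is not defined. You instead establish the polynomial identity $D_n'(1,x)=-n\,E_{n-2}(1,x)$ in $\mathbb{Z}[x]$ directly from the coefficients (the paper records this relation only away from $\frac14$, in Lemma~\ref{LLLJ4}), split the hypothesis by CRT into the mod $p$ and mod $p^2-1$ parts, and then prove periodicity of $E_m(1,\cdot)$ pointwise, including the degenerate value $E_m\bigl(1,\frac14\bigr)=\frac{m+1}{2^m}$, whose period $p(p-1)$ divides $p(p^2-1)$. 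What your approach buys is precisely the rigorous treatment of $x=\frac14$ that the paper's chain-rule computation glosses over, plus a second-kind periodicity statement of independent use (compare Lemma~\ref{LJ2} and Corollary~\ref{ll1}); for $p=2$ the exceptional point simply does not exist since $t^2-t+x$ is separable in characteristic $2$, so your case analysis degenerates harmlessly there. One small quibble: your closing remark that the point $x=\frac14$ is ``precisely'' what forces the factor $p$ in the modulus overstates matters, since the scalar $n$ in $D_n'(1,x)=-n\,E_{n-2}(1,x)$ already requires $n_1\equiv n_2\pmod p$ even away from $\frac14$ (e.g.\ $D_n'(1,0)=-n$, as in Remark~\ref{r1}); this is only commentary, though, and does not affect the validity of your proof.
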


\begin{proof}
For each $x\in \mathbb{F}_p$, there exists $y\in \mathbb{F}_{p^2}$ such that $x=y(1-y)$. Then we have 
$$D_{n}(1,x)=D_{n}(1,y(1-y))=y^n+(1-y)^n.$$
Since $x=y(1-y)$, we have $\displaystyle{\frac{dy}{dx}=\frac{-1}{2y-1}}$. 
The proof follows from the fact that
\[
\begin{split} 
D_{n_1}^{\prime}(1,x)&=n_1\,(y^{n_1-1}-\,(1-y)^{n_1-1})\cdot \displaystyle{\Big(\frac{-1}{2y-1}\Big)}\cr
&=n_2\,(y^{n_2-1}-\,(1-y)^{n_2-1})\cdot \displaystyle{\Big(\frac{-1}{2y-1}\Big)}\cr
&=D_{n_2}^{\prime}(1,x).
\end{split}
\]
whenever $n_1\equiv n_2\pmod{p(p^2-1)}.$
\end{proof} 

We show that in even characteristic, the regular derivative of the $n$th reversed Dickson polynomial is the $(n-1)$st reversed Dickson polynomial. Note that in even characteristic, $\displaystyle{\frac{dy}{dx}=\frac{-1}{2y-1}=1}$.

\begin{lem}\label{LLJ2} 
Let $n>1$. In even characteristic, $D_n^{\prime}(1,x)=D_{n-1}(1,x)$ whenever $n$ is odd. In particular, $D_n^{\prime}(1,1)$ is odd whenever $n\equiv 3\pmod{6}$.
\end{lem}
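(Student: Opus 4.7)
The plan is to mimic the substitution argument used in the proof of Lemma~\ref{lll1}. Recall that for each $x\in\mathbb{F}_p$ there is $y\in\mathbb{F}_{p^2}$ with $x=y(1-y)$, under which $D_n(1,x)=y^n+(1-y)^n$, and consequently
\[
D_n^{\prime}(1,x)=n\bigl(y^{n-1}-(1-y)^{n-1}\bigr)\cdot\frac{-1}{2y-1}.
\]
First I would specialize this identity to characteristic $2$. In that case $2y-1=-1=1$, so the chain-rule factor $\dfrac{-1}{2y-1}$ collapses to $1$ (as already observed in the excerpt just before the lemma). Moreover, since $n$ is odd we have $n\equiv 1\pmod{2}$, so the leading factor $n$ also reduces to $1$.

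Next, I would use that $-1=1$ in $\mathbb{F}_2$ to rewrite $y^{n-1}-(1-y)^{n-1}=y^{n-1}+(1-y)^{n-1}$. Putting the three simplifications together yields
\[
D_n^{\prime}(1,x)=y^{n-1}+(1-y)^{n-1}=D_{n-1}(1,x),
\]
which is the first assertion. The only thing to check carefully is that the substitution $x=y(1-y)$ is valid for every $x\in\mathbb{F}_2$ (take $y=0$ for $x=0$ and $y\in\mathbb{F}_4\setminus\mathbb{F}_2$ with $y^2+y=1$ for $x=1$), so the identity really holds pointwise on $\mathbb{F}_2$.

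For the \emph{in particular} statement, suppose $n\equiv 3\pmod 6$. Then $n$ is odd, so by the first part $D_n^{\prime}(1,1)=D_{n-1}(1,1)$ in $\mathbb{F}_2$. Since $n\equiv 3\pmod 6$ forces $n-1\equiv 2\pmod 3$, the table for the sequence $\{D_n(1,1)\bmod 2\}$ given just before Lemma~\ref{lll1} gives $D_{n-1}(1,1)=1$ in $\mathbb{F}_2$, so $D_n^{\prime}(1,1)$ is odd. The only subtlety is that this reduction statement only tells us the parity of $D_n^{\prime}(1,1)$, but that is precisely the claim; no obstacle beyond the bookkeeping just described is anticipated.
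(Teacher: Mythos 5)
Your proposal is correct and follows essentially the same route as the paper: substitute $x=y(1-y)$, note that in characteristic $2$ the chain-rule factor $\frac{-1}{2y-1}$ is $1$ and the factor $n$ reduces to $1$ since $n$ is odd, giving $D_n^{\prime}(1,x)=y^{n-1}+(1-y)^{n-1}=D_{n-1}(1,x)$, and then read off $D_{n-1}(1,1)\equiv 1\pmod 2$ from the period-$3$ sequence because $n\equiv 3\pmod 6$ forces $n-1\equiv 2\pmod 3$. Your extra check that every $x\in\mathbb{F}_2$ is of the form $y(1-y)$ (with $y$ in $\mathbb{F}_4$ when $x=1$) is a small added detail the paper leaves implicit, but the argument is the same.
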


\begin{proof}

Since $D_n(1,y(1-y))=y^n+(1-y)^n$, we have 
$$D_n^{\prime}(1,y(1-y))=n\,y^{n-1}+n\,(1-y)^{n-1}=y^{n-1}+(1-y)^{n-1}=D_{n-1}(1,y(1-y)).$$ 

Thus, we have $D_n^{\prime}(1,x)=D_{n-1}(1,x)$. When $n\equiv 3\pmod{6}$, we have $$D_n^{\prime}(1,1)\equiv D_{n-1}(1,1)\equiv 1\pmod{2}.$$

\end{proof}

We state the main theorem in this subsection. 

\begin{thm}
$D_n(1,x)$ is a PP over $\mathbb{Z}_{2^t}$ if and only if $n\equiv 3\pmod{6}$.
\end{thm}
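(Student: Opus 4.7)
The plan is to apply Lemma \ref{L2}, which converts the problem into two checks modulo $2$: (a) $D_n(1,x)$ is a PP of $\mathbb{Z}_2$, and (b) the regular derivative satisfies $D_n'(1,s) \not\equiv 0 \pmod 2$ for every $s \in \{0,1\}$. The earlier lemma in this subsection already handles (a), reducing it to $n > 0$ and $n \equiv 0 \pmod 3$. So the entire task is to pin down when (b) holds.

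For the \emph{if} direction, assume $n \equiv 3 \pmod 6$. Then $n \equiv 0 \pmod 3$ (so (a) holds) and $n$ is odd, so Lemma \ref{LLJ2} gives $D_n'(1,x) = D_{n-1}(1,x)$ in $\mathbb{F}_2[x]$. I then use the two explicit sequences for $D_m(1,0)$ and $D_m(1,1)$ tabulated just before Lemma \ref{lll1}: since $n-1 \geq 2$, we have $D_{n-1}(1,0) = 1$, and since $n-1 \equiv 2 \pmod 3$ we have $D_{n-1}(1,1) = 1$. Both residues are odd, so (b) holds and $D_n(1,x)$ is a PP over $\mathbb{Z}_{2^t}$.

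For the \emph{only if} direction, assume $D_n(1,x)$ is a PP over $\mathbb{Z}_{2^t}$. Condition (a) forces $n \equiv 0 \pmod 3$ with $n > 0$, so I only need to rule out $n \equiv 0 \pmod 6$. Here I invoke Lemma \ref{LJ1}(i) at $p = 2$: for $n = 2m$ we have $D_{2m}(1,x) = (D_m(1,x))^2$ in $\mathbb{F}_2[x]$, and differentiation yields $D_{2m}'(1,x) = 2\,D_m(1,x)\,D_m'(1,x) = 0$ in $\mathbb{F}_2[x]$. Hence $D_n'(1,s) \equiv 0 \pmod 2$ for every $s$, violating (b). So $n$ must be odd, and combined with $3 \mid n$ this forces $n \equiv 3 \pmod 6$.

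There is no serious obstacle: the argument is essentially an application of Lemma \ref{L2} together with two facts from the paper (Lemma \ref{LJ1}(i) and Lemma \ref{LLJ2}) and the explicit period-$3$ and constant sequences for $D_n(1,1)$ and $D_n(1,0)$. The only mild subtlety is that Lemma \ref{LLJ2} is stated for odd $n$, but the even case is precisely the case that must be excluded, and the Frobenius identity $D_{2m}(1,x) = (D_m(1,x))^2$ eliminates it cleanly by making the derivative vanish identically. Note that the statement should be understood for $t \geq 2$ (the hypothesis of Lemma \ref{L2}); in the degenerate case $t = 1$, condition (b) is vacuous and every $n \equiv 0 \pmod 3$ with $n > 0$ works.
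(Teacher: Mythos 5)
Your proof is correct and follows essentially the same route as the paper: reduce via Lemma~\ref{L2} to the PP criterion mod $2$ plus the derivative condition, and verify the latter using Lemma~\ref{LLJ2} together with the mod-$2$ sequences for $D_n(1,0)$ and $D_n(1,1)$. The only cosmetic difference is in ruling out even $n$: the paper observes $D_n^{\prime}(1,0)=-n\equiv 0\pmod{2}$, while you deduce the identical vanishing of $D_n^{\prime}(1,x)$ mod $2$ from the Frobenius identity of Lemma~\ref{LJ1}(i); both yield the same contradiction.
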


\begin{proof}
Assume that $n\equiv 3\pmod{6}$. Then, we have $D_n(1,0)=1$ and $D_n(1,1)=0$, which implies that $D_n(1,x)$ is a PP over $\mathbb{Z}_2$. Let $s$ be an integer. Then, in modulo 2, we have 

\[
\begin{split}
D_n^{\prime}(1,s)&=\sum_{i=1}^{\lfloor n/2\rfloor}\,\frac{ni}{n-i}\binom{n-i}{i}\,s^{i-1}\cr
\end{split}
\]

If $s$ is even, then $D_n^{\prime}(1,s)\equiv 1\pmod{2}.$ If $s$ is odd, then we have 

\[
\begin{split}
D_n^{\prime}(1,s)&\equiv D_n^{\prime}(1,1)\pmod{2}\cr
&\equiv D_{n-1}(1,1)\pmod{2}\cr
&\equiv 1\pmod{2}
\end{split}
\]

Therefore, $D_n^{\prime}(1,s)\not\equiv 0$ for every integer $s$. By Lemma~\ref{L2}, $D_n(1,x)$ is a PP over $\mathbb{Z}_{2^t}$. We have now shown that if $n\equiv 3\pmod{6}$, then $D_n(1,x)$ is a PP over $\mathbb{Z}_{2^t}$.

Now Assume that $D_n(1,x)$ is a PP over $\mathbb{Z}_{2^t}$. Then $D_n(1,x)$ is a PP over $\mathbb{Z}_{2}$ which implies $n\equiv 0\pmod{3}$. We show that $n$ cannot be even. If $n$ is even, then $n\equiv 0\pmod{6}$, and $D_n^{\prime}(1,s)=0$ for any even integer $s$, which is a contradiction. Thus $n\equiv 3\pmod{6}$. 
\end{proof}

\subsection{Permutation behaviour over $\mathbb{Z}_{3^t}$}

In this subsection, we consider the permutation behaviour of RDPs of the first kind over $\mathbb{Z}_{3^t}$. From the recurrence relation 

$$D_0(1,x)=2, \,\,D_1(1,x)=1,\,\,D_n(1,x)=D_{n-1}(1,x)-xD_{n-2}(1,x)\,\,\text{for}\,\,n\geq 2,$$

we obtain the sequence $\{D_n(1,1)\}_{n=0}^{\infty}\pmod{3}$:

$$2, 1, 2, 1, 2, 1, 2, 1, 2, 1, 2, 1, 2, 1, 2, 1, 2, 1 \ldots $$

We have

\begin{equation}\label{SJ1} 
D_n(1,1) = \left\{
\begin{alignedat}{2}
  & 1 & \quad & \text{if}\ n\equiv 1 \pmod{2},\\
  & 2    & \quad & \text{if}\ n\equiv 0 \pmod{2}.\\
\end{alignedat}
\right.
\end{equation}

When $x=0$, we get the following sequence:

\begin{equation}\label{SJ2}
D_n(1,0) = \left\{
\begin{alignedat}{2}
  & 2 & \quad & \text{if}\,\,n = 0,\\
  & 1    & \quad & \text{if}\,\,n\geq 1.\\
\end{alignedat}
\right.
\end{equation} 

When $x=-1$, we obtain

$$2, 1, 0, 1, 1, 2, 0, 2, 2, 1, 0, 1, 1, 2, 0, 2, 2, 1, \ldots ,$$

which gives us

\begin{equation}\label{SJ3} 
D_n(1,-1) = \left\{
\begin{alignedat}{2}
  & 0 & \quad & \text{if}\ n\equiv 2\,\,\text{or}\,\,6 \pmod{8},\\
  & 1    & \quad & \text{if}\ n\equiv 1, 3,\,\,\text{or}\,\,4 \pmod{8},\\
  & 2  & \quad & \text{if}\ n\equiv 0, 5, \,\,\text{or}\,\,7 \pmod{8}.
\end{alignedat}
\right.
\end{equation} 

To study the permutation behaviour of RDPs over $\mathbb{Z}_{3^t}$, we first need to know their permutation behaviour over $\mathbb{Z}_3$, which is given by the following Lemma that appeared in \cite[Corollary 5.2]{HMSY}. The proof of the lemma follows from the three sequences $D_n(1,0), D_n(1,1)$ and $D_n(1,-1)$.

\begin{lem}\label{ll3}
$D_n(1,x)$ is a PP over $\mathbb{Z}_3$ if and only if $n\equiv 2\pmod{4}$. 
\end{lem}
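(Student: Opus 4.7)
The plan is to use the hint given in the excerpt: a polynomial $g$ is a PP of $\mathbb{Z}_3 = \{0,1,-1\}$ if and only if the three values $g(0), g(1), g(-1)$ form a permutation of $\{0,1,2\}$, so the statement reduces to inspecting the three sequences $\{D_n(1,0)\}$, $\{D_n(1,1)\}$ and $\{D_n(1,-1)\}$ modulo $3$ that the excerpt has already computed in \eqref{SJ1}, \eqref{SJ2}, \eqref{SJ3}.

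For the forward direction, I would proceed as follows. Suppose $D_n(1,x)$ is a PP of $\mathbb{Z}_3$. Since \eqref{SJ2} gives $D_n(1,0) = 1$ for every $n \geq 1$ (and $n = 0$ is excluded because $D_0(1,x) \equiv 2$ is constant and hence not a PP), the remaining two values $D_n(1,1)$ and $D_n(1,-1)$ must be $0$ and $2$ in some order. From \eqref{SJ1}, $D_n(1,1)$ never equals $0$, so we are forced to have $D_n(1,1) = 2$ and $D_n(1,-1) = 0$. The first condition, via \eqref{SJ1}, gives $n$ even; the second, via \eqref{SJ3}, gives $n \equiv 2$ or $6 \pmod{8}$. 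Both cases yield $n \equiv 2 \pmod{4}$.

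For the converse, suppose $n \equiv 2 \pmod{4}$. Then $n$ is even, so \eqref{SJ1} yields $D_n(1,1) = 2$; also $n \equiv 2$ or $6 \pmod{8}$, so \eqref{SJ3} yields $D_n(1,-1) = 0$; and since $n \geq 2 \geq 1$, \eqref{SJ2} yields $D_n(1,0) = 1$. The triple of values is therefore $\{0,1,2\}$, so $D_n(1,x)$ is a PP of $\mathbb{Z}_3$.

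There is essentially no obstacle here once the three periodic sequences are in hand; the argument is a case check on the values at the three points of $\mathbb{Z}_3$. The only mild subtlety worth mentioning is the boundary case $n = 0$, which has to be ruled out separately because \eqref{SJ2} distinguishes it from $n \geq 1$, but this is covered by noting that the constant polynomial $2$ is obviously not a permutation.
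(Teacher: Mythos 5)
Your proof is correct and takes essentially the same route as the paper: the paper simply remarks that the lemma follows from the three sequences \eqref{SJ1}, \eqref{SJ2}, \eqref{SJ3}, and your case check at $x=0,1,-1$ (including ruling out $n=0$) is exactly the verification that remark leaves implicit.
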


\begin{rmk}
Clearly,  $D_0(1,x)$ is a CPP over $\mathbb{Z}_3$.
\end{rmk}

In the next Theorem, we give necessary and sufficient conditions for $D_n(1,x)$ to a CPP over $\mathbb{Z}_3$. 

\begin{thm}\label{TJ191}
Let $n\geq 1$. Then, $D_n(1,x)$ is a CPP over $\mathbb{Z}_3$ if and only if $n\equiv 2, 6\pmod{8}$. 
\end{thm}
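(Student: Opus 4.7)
The plan is to reduce the CPP condition to three evaluations on $\mathbb{Z}_3$ and read off the answer from the sequences \eqref{SJ1}, \eqref{SJ2}, \eqref{SJ3}. Recall that $D_n(1,x)$ is a CPP over $\mathbb{Z}_3$ precisely when both $D_n(1,x)$ and $D_n(1,x)+x$ induce bijections of $\{0,1,2\}$.

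For the forward direction, assume $D_n(1,x)$ is a CPP. Then, in particular, $D_n(1,x)$ is a PP over $\mathbb{Z}_3$, so Lemma~\ref{ll3} forces $n\equiv 2\pmod 4$, i.e.\ $n\equiv 2$ or $6\pmod 8$. This half of the equivalence is immediate once Lemma~\ref{ll3} is invoked.

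For the reverse direction, assume $n\equiv 2\pmod 4$; by Lemma~\ref{ll3} we already know $D_n(1,x)$ is a PP, so the work reduces to checking that $D_n(1,x)+x$ is a PP over $\mathbb{Z}_3$. Since $n\geq 2$ is even, \eqref{SJ2} gives $D_n(1,0)+0=1$ and \eqref{SJ1} gives $D_n(1,1)+1=2+1\equiv 0\pmod 3$. The only remaining value is at $x\equiv -1\pmod 3$, where I would split on $n\pmod 8$: by \eqref{SJ3}, both $n\equiv 2\pmod 8$ and $n\equiv 6\pmod 8$ yield $D_n(1,-1)=0$, hence $D_n(1,-1)+(-1)\equiv 2\pmod 3$. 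The images are $\{1,0,2\}=\mathbb{Z}_3$, so $D_n(1,x)+x$ is a PP and we conclude that $D_n(1,x)$ is a CPP.

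There is no real obstacle here, since every needed value of $D_n(1,\cdot)$ on $\mathbb{Z}_3$ has already been tabulated in \eqref{SJ1}--\eqref{SJ3}; the only subtlety is bookkeeping the residue $n\pmod 8$ correctly, together with remembering that Lemma~\ref{ll3} already makes the ``only if'' direction automatic. Thus the entire argument is a short case check on three evaluations.
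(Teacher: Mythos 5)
Your proof is correct and takes essentially the same route as the paper: both arguments rest on Lemma~\ref{ll3} together with the tabulated values \eqref{SJ1}--\eqref{SJ3} of $D_n(1,\cdot)$ at $x=0,1,-1$, checking that $D_n(1,x)+x$ hits $\{1,0,2\}=\mathbb{Z}_3$. The only cosmetic difference is that the paper first records all residues $n\pmod 8$ for which $D_n(1,x)+x$ is a PP (namely $1,2,3,6$) and then intersects with the PP condition $n\equiv 2\pmod 4$, whereas you settle the ``only if'' direction immediately via CPP $\Rightarrow$ PP and Lemma~\ref{ll3}, so you only need to verify the residues $2,6\pmod 8$.
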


\begin{proof}

Consider the following three sequences $D_n(1,0)$, $D_n(1,1) + 1$, and $D_n(1,-1) +2$. 

\[
D_n(1,0) = \left\{
\begin{alignedat}{2}
  & 2 & \quad & \text{if}\,\,n = 0,\\
  & 1    & \quad & \text{if}\,\,n\geq 1.\\
\end{alignedat}
\right.
\]

\[
D_n(1,1) + 1= \left\{
\begin{alignedat}{2}
  & 2 & \quad & \text{if}\ n\equiv 1 \pmod{2},\\
  & 0   & \quad & \text{if}\ n\equiv 0 \pmod{2}.\\
\end{alignedat}
\right.
\]

\[
D_n(1,-1) +2 = \left\{
\begin{alignedat}{2}
  & 2 & \quad & \text{if}\ n\equiv 2\,\,\text{or}\,\,6 \pmod{8},\\
  & 0    & \quad & \text{if}\ n\equiv 1, 3,\,\,\text{or}\,\,4 \pmod{8},\\
  & 1  & \quad & \text{if}\ n\equiv 0, 5, \,\,\text{or}\,\,7 \pmod{8}.
\end{alignedat}
\right.
\]

Clearly, $D_n(1,x)+x$ is a PP over $\mathbb{Z}_3$ if and only if $n\equiv 1,2,3,6\pmod{8}$. The proof follows from the fact that $D_n(1,x)$ is a PP over $\mathbb{Z}_3$ if and only if $n\equiv 2\pmod{4}$. 

\end{proof}

\begin{rmk}\label{r1}
Since $$\displaystyle{D_n(1,x)=\sum_{i=0}^{\lfloor n/2\rfloor}\,\frac{n}{n-i}\binom{n-i}{i}\,(-x)^i},$$
we have 
$$\displaystyle{D_n^{\prime}(1,x)=\sum_{i=1}^{\lfloor n/2\rfloor}\,\frac{ni}{n-i}\binom{n-i}{i}\,(-1)^i\,x^{i-1}},$$
and $\displaystyle{D_n^{\prime}(1,0)=-n}$.
\end{rmk} 

\begin{lem}\label{LLLJ4}
Let $p$ be an odd prime, $s\in \mathbb{F}_p\setminus \{\frac{1}{4}\}$, and $n\geq 2$. Then
\begin{center}
$D_n^{\prime}(1,s)=0$ if and only if $n\equiv 0\pmod{p}$ or $E_{n-2}(1,s)=0$,
\end{center} 
where $D_n^{\prime}(1,x)$ is the derivative of the $n$th reversed Dickson polynomial of the first kind and $E_{n-2}(1,x)$ is the $(n-2)$nd reversed Dickson polynomial of the second kind. 
\end{lem}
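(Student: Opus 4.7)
The plan is to reduce the statement to the clean identity $D_n'(1,s) = -n\,E_{n-2}(1,s)$ for all $s \in \mathbb{F}_p \setminus \{\tfrac14\}$. Once this identity is established, the conclusion is immediate: $D_n'(1,s)$ vanishes iff $n \equiv 0 \pmod p$ or $E_{n-2}(1,s) = 0$, which is exactly the claim.

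To obtain the identity, I would first parameterize $x = y(1-y)$ with $y \in \mathbb{F}_{p^2}$, exactly as in the proof of Lemma~\ref{lll1}. The restriction $s \neq \tfrac14$ corresponds to $y \neq \tfrac12$, i.e.\ $2y-1 \neq 0$, which is the denominator appearing in $\tfrac{dy}{dx} = -(2y-1)^{-1}$. Using the standard functional identity
\[
D_n(1,y(1-y)) = y^n + (1-y)^n,
\]
I would apply the chain rule to obtain
\[
D_n'(1,x) \;=\; n\bigl(y^{n-1}-(1-y)^{n-1}\bigr)\,\frac{dy}{dx} \;=\; \frac{-n\bigl(y^{n-1}-(1-y)^{n-1}\bigr)}{2y-1}.
\]

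Next, I would identify the right-hand side in terms of the reversed Dickson polynomial of the second kind. The analogous functional identity for $E_n(1,x)$ (easily verified by induction from the recurrence $E_0=1$, $E_1=1$, $E_n = E_{n-1}-xE_{n-2}$) is
\[
E_n(1,y(1-y)) \;=\; \frac{y^{n+1}-(1-y)^{n+1}}{y-(1-y)} \;=\; \frac{y^{n+1}-(1-y)^{n+1}}{2y-1}.
\]
Substituting $n \mapsto n-2$, we see that the factor appearing in the derivative computation is precisely $E_{n-2}(1,x)$, yielding $D_n'(1,x) = -n\,E_{n-2}(1,x)$ as desired.

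I do not expect serious obstacles. The only point that needs care is verifying the $E_n$ functional identity (if the paper has not already recorded it) and being explicit about why $s = \tfrac14$ must be excluded, namely that the factor $2y-1$ in the denominator vanishes there. After that, the biconditional $D_n'(1,s) = -n\,E_{n-2}(1,s) = 0 \iff p \mid n \text{ or } E_{n-2}(1,s) = 0$ is entirely formal in the field $\mathbb{F}_p$.
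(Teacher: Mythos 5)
Your proposal is correct and follows essentially the same route as the paper: parameterize $x=y(1-y)$, differentiate the functional identity $D_n(1,y(1-y))=y^n+(1-y)^n$ via $\frac{dy}{dx}=\frac{-1}{2y-1}$, and recognize the resulting quotient as $-n\,E_{n-2}(1,x)$ using the functional expression for the second-kind polynomials (which the paper records in Section~\ref{S4}). The concluding biconditional is then immediate, exactly as in the paper's proof.
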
 

\begin{proof}
For each $x\in \mathbb{F}_p$, there exists $y\in \mathbb{F}_{p^2}$ such that $x=y(1-y)$. Then we have 
$$D_{n}(1,x)=D_{n}(1,y(1-y))=y^n+(1-y)^n.$$
Since $x=y(1-y)$, we have $\displaystyle{\frac{dy}{dx}=\frac{-1}{2y-1}}$. When $y\neq \frac{1}{2}$, i.e.  $x\neq \frac{1}{4}$, we have
\[
\begin{split} 
D_{n}^{\prime}(1,x)&=n\,(y^{n-1}-\,(1-y)^{n-1})\cdot \displaystyle{\Big(\frac{-1}{2y-1}\Big)}\cr
&= - n\,\,\,\displaystyle{\frac{y^{n-1}-\,(1-y)^{n-1}}{2y-1}}\cr
&= - n \,\,E_{n-2}(1,x).
\end{split}
\]
Thus, we have the desired result. 
\end{proof} 

\begin{lem}\label{LLJ4} Let $p=3$. Then the sequence $\{D_n(1,1)\}_{n=0}^{\infty}$ is given by  
\[
D_{n}^{\prime}(1,\frac{1}{4}) = D_{n}^{\prime}(1,1) = \left\{
\begin{alignedat}{2}
  & 0 & \quad & \text{if}\ n \equiv 3,4,6,7,9,10,12,13,15,16,18,19,21,22 \pmod{24}\\
  & 1 & \quad & \text{if}\ n \equiv 2,8,14,20 \pmod{24}\\
  & 2 & \quad & \text{if}\ n \equiv 5,11,17,23 \pmod{24};
\end{alignedat}
\right.
\]
\end{lem}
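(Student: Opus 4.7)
The plan is to differentiate the defining recurrence for $D_n(1,x)$, evaluate at $x=1$, and compute the resulting sequence modulo $3$.

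First, since $4 \equiv 1 \pmod{3}$, we have $\frac{1}{4} = 1$ in $\mathbb{F}_3$, so the identity $D_n'\bigl(1,\tfrac{1}{4}\bigr) = D_n'(1,1)$ is automatic and needs no work. Note that Lemma~\ref{LLLJ4} does not apply here because it explicitly excludes $s = \tfrac{1}{4}$, which is exactly the point of interest; this is why a direct recursive computation is needed.

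Next, I would differentiate the defining recurrence
$$D_n(1,x) = D_{n-1}(1,x) - x\,D_{n-2}(1,x)$$
to obtain
$$D_n'(1,x) = D_{n-1}'(1,x) - D_{n-2}(1,x) - x\,D_{n-2}'(1,x),$$
with initial conditions $D_0'(1,x) = D_1'(1,x) = 0$. Evaluating at $x = 1$ yields the scalar recurrence
$$D_n'(1,1) = D_{n-1}'(1,1) - D_{n-2}(1,1) - D_{n-2}'(1,1) \pmod{3},$$
where $D_{n-2}(1,1)$ is already known from \eqref{SJ1}: it equals $1$ if $n$ is odd and $2$ if $n$ is even.

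Starting from $D_0'(1,1) = D_1'(1,1) = 0$ and applying the recurrence produces the sequence
$$\{D_n'(1,1)\}_{n\geq 2} \equiv 1,\,0,\,0,\,2,\,0,\,0,\,1,\,0,\,0,\,2,\,0,\,0,\,\ldots \pmod{3},$$
which is visibly periodic of period $6$. By Lemma~\ref{lll1}, the sequence $\{D_n'(1,x)\}_{n>1}$ is periodic in $n$ modulo $p(p^2-1) = 24$, so once the first $24$ terms are verified the claim extends to all $n \geq 2$. Matching the period-$6$ pattern against the residue classes modulo $24$ gives precisely the three cases listed in the statement.

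There is no genuine obstacle here beyond careful bookkeeping. The only thing to be mindful of is that Lemma~\ref{LLLJ4} is unavailable at $s = \tfrac14 = 1$, so one must stay with the recurrence rather than trying to invoke $D_n'(1,s) = -n\,E_{n-2}(1,s)$; aside from that, the proof is a direct finite computation combined with the periodicity statement in Lemma~\ref{lll1}.
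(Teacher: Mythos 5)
Your proof is correct, and it reaches the statement by the same basic route the paper takes --- a finite verification of the residues modulo $24$ combined with periodicity --- but with a different computational engine: the paper's proof is a one-line citation of the tabulated sequence \ref{B0} in Appendix~\ref{AppB}, whereas you generate the values yourself from the differentiated recurrence $D_n^{\prime}(1,x)=D_{n-1}^{\prime}(1,x)-D_{n-2}(1,x)-xD_{n-2}^{\prime}(1,x)$, which is a valid polynomial identity, evaluated at $x=1$ using \eqref{SJ1}. This self-contained derivation buys something concrete here: the appendix sequence \ref{B0} lists $0$ at $n=20$ (and nonzero entries at $n=0,1$), which does not match the lemma as stated, while your recurrence gives $D_{20}^{\prime}(1,1)=1$, in agreement with the statement and with a direct check from the explicit formula ($D_{20}^{\prime}(1,1)=-20\equiv 1\pmod 3$); so your computation validates the lemma where the cited table appears to contain a typo. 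You are also right that Lemma~\ref{LLLJ4} cannot be used, since for $p=3$ the point $x=1=\frac{1}{4}$ corresponds to $y=\frac{1}{2}$, where $2y-1=0$. One small caveat: the paper's proof of Lemma~\ref{lll1} also goes through the functional expression with the factor $\frac{-1}{2y-1}$, so it is delicate exactly at $x=\frac{1}{4}$; if you want to avoid leaning on it there, note that your recurrence already forces periodicity on its own, because the state $\bigl(D_n^{\prime}(1,1),D_{n+1}^{\prime}(1,1),n\bmod 2\bigr)$ repeats (for instance at $n=2$ and $n=8$), which gives period $6$ for all $n\geq 2$ and hence the stated classification modulo $24$.
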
 

\begin{proof}
The proof follows from the sequence~\ref{B0} in Appendix~\ref{AppB}. 
\end{proof} 

Now we state the main theorem in this subsection. 

\begin{thm}\label{T3}
$D_n(1,x)$ is a PP over $\mathbb{Z}_{3^t}$ if and only if $n\equiv 2, 14\pmod{24}$.
\end{thm}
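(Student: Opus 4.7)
The plan is to apply Lemma~\ref{L2}, which reduces the problem to two conditions: (i) $D_n(1,x)$ is a PP over $\mathbb{Z}_3$, and (ii) $D_n^{\prime}(1,s)\not\equiv 0\pmod{3}$ for every $s\in\mathbb{F}_3=\{0,1,2\}$. Condition (i) is handled immediately by Lemma~\ref{ll3}, which forces $n\equiv 2\pmod{4}$. The bulk of the work is verifying (ii) point-by-point on $\mathbb{F}_3$.

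For condition (ii), I would split into the three cases $s\in\{0,1,2\}$ and handle each using the tools already established. For $s=0$, Remark~\ref{r1} gives $D_n^{\prime}(1,0)=-n$, yielding the congruence condition $n\not\equiv 0\pmod{3}$. For $s=1$, I would observe that $1=\tfrac{1}{4}$ in $\mathbb{F}_3$, so Lemma~\ref{LLJ4} applies directly and gives the nonzero condition $n\equiv 2,5,8,11,14,17,20,23\pmod{24}$. For $s=2=-1$, which is distinct from $\tfrac14$ in $\mathbb{F}_3$, Lemma~\ref{LLLJ4} reduces the problem to $n\not\equiv 0\pmod 3$ together with $E_{n-2}(1,-1)\not\equiv 0\pmod 3$.

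The last piece I would work out is the sequence $\{E_n(1,-1)\bmod 3\}$. Using the recurrence $E_n(1,x)=E_{n-1}(1,x)-xE_{n-2}(1,x)$ specialized at $x=-1$, this becomes the Fibonacci-like recursion $E_n=E_{n-1}+E_{n-2}$ in $\mathbb{F}_3$, starting from $E_0=E_1=1$. A short computation shows the sequence has period $8$, namely $1,1,2,0,2,2,1,0$, so $E_n(1,-1)\equiv 0\pmod 3$ precisely when $n\equiv 3,7\pmod 8$. Thus the $s=-1$ condition is equivalent to $n\not\equiv 0\pmod 3$ and $n\not\equiv 1,5\pmod 8$.

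Finally, I would intersect all four conditions: the constraint $n\equiv 2\pmod 4$ restricts to $\{2,6,10,14,18,22\}\pmod{24}$; intersecting with the allowed residues from $s=1$ produces exactly $\{2,14\}\pmod{24}$. It then suffices to check that the constraints from $s=0$ and $s=-1$ are automatically satisfied on these two residues: both $2$ and $14$ are $\equiv 2\pmod 3$ and lie in $\{2,6\}\pmod 8$, neither of which equals $1$ or $5\pmod 8$. The main obstacle is simply organizing the three derivative computations cleanly; the $s=-1$ case is the only one requiring nontrivial work, but it reduces to analyzing a length-$8$ Fibonacci cycle in $\mathbb{F}_3$, which is routine.
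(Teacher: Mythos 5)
Your proposal is correct and takes essentially the same route as the paper: reduce via the mod-$p^t$ lifting criterion to being a PP over $\mathbb{Z}_3$ (Lemma~\ref{ll3}) plus nonvanishing of $D_n^{\prime}(1,s)$ at $s=0,1,-1$, handled respectively by Remark~\ref{r1}, Lemma~\ref{LLJ4} (with the observation that $1=\frac{1}{4}$ in $\mathbb{F}_3$), and Lemma~\ref{LLLJ4} together with the period-$8$ sequence $E_n(1,-1)$ of \eqref{EJ4}, then intersect the congruences modulo $24$. The paper organizes the necessity step only slightly differently, enumerating the residues $2,6,10,14,18,22$ below $24$ via the periodicity lemmas and eliminating $6,18$ at $s=0$ and $10,22$ at $s=1$, which is the same argument in substance.
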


\begin{proof} 

Assume that $n\equiv 2\,\text{or}\,\,14\pmod{24}$. Then, $n\equiv2\pmod{4}$. By Lemma~\ref{ll3}, $D_n(1,x)$ is a PP over $\mathbb{Z}_3.$ Now we show that $D_n'(1,s)\neq0$ for all $s\in\mathbb{Z}_3$. Remark~\ref{r1} says that $D_n'(1,0)\neq0$ whenever $n\equiv 2\,\text{or}\,\,14\pmod{24}$. Lemma~\ref{LLJ4} says that $D_n'(1,1)\neq0$ whenever $n\equiv 2\,\text{or}\,\,14\pmod{24}$. From Lemma~\ref{LLLJ4} and \eqref{EJ4}, we have $D_n'(1,-1)\neq0$ whenever $n\equiv 2\,\text{or}\,\,14\pmod{24}$. We have now shown that $D_n'(1,s)\neq0$ for all $s\in\mathbb{Z}_3$.

Now assume that $D_n(1,x)$ is a PP over $\mathbb{Z}_{3^t}.$ Then, $D_n(1,x)$ is a PP over $\mathbb{Z}_3$ and $D_n'(1,s)\neq0$ for all $s\in\mathbb{Z}_3$. Since $D_n(1,x)$ is a PP over $\mathbb{Z}_3$, $n\equiv2\pmod{4}$. Because of Lemma~\ref{LJ1} (ii) and Lemma~\ref{lll1}, we only need to consider the polynomials whose indices are less than $24$. Remark~\ref{r1} says that $D_6^{\prime}(1,0)=0=D_{18}^{\prime}(1,0)$, and Lemma~\ref{LLJ4} implies that $D_{10}^{\prime}(1,1)=0=D_{22}^{\prime}(1,1)$. Thus, we have the desired result. 
\end{proof} 


\subsection{Complete permutation polynomials over $\mathbb{Z}_p$}

\begin{prop}
Let $p$ be an odd prime. If $n = 2, 2p$, then $D_{n}(1,x)$ is a CPP of $\mathbb{F}_p$
\end{prop}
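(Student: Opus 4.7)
The plan is to reduce both cases to the same linear polynomial and then check that this linear polynomial and its additive shift by $x$ are both permutations of $\mathbb{F}_p$.

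First I would compute $D_2(1,x)$ directly from the recurrence relation
\[
D_0(1,x)=2,\quad D_1(1,x)=1,\quad D_n(1,x)=D_{n-1}(1,x)-x\,D_{n-2}(1,x),
\]
which immediately gives $D_2(1,x)=D_1(1,x)-x\,D_0(1,x)=1-2x$.

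Next I would handle $n=2p$ by reducing it to the $n=2$ case as a function on $\mathbb{F}_p$. By Lemma~\ref{LJ1}(i) we have $D_{2p}(1,x)=(D_2(1,x))^p$ in $\mathbb{F}_p[x]$, hence $D_{2p}(1,c)=D_2(1,c)^p=D_2(1,c)$ for every $c\in\mathbb{F}_p$ by Fermat's little theorem. Therefore, as a function on $\mathbb{F}_p$, $D_{2p}(1,x)=1-2x$ as well.

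Finally, I would verify the CPP conditions. Since $p$ is odd, $-2\not\equiv 0\pmod{p}$, so $D_n(1,x)=1-2x$ is a linear polynomial with nonzero leading coefficient and hence a PP of $\mathbb{F}_p$. Moreover, $D_n(1,x)+x=1-2x+x=1-x$, again a linear polynomial with nonzero leading coefficient $-1$, so it too is a PP of $\mathbb{F}_p$. By definition of a complete permutation polynomial this finishes both cases.

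There is no real obstacle here; the only subtle point is that $D_{2p}(1,x)$ and $D_2(1,x)$ agree only as functions on $\mathbb{F}_p$ (not as formal polynomials), which is exactly what is needed for the CPP property. Everything else is a one-line computation from the recurrence.
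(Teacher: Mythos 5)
Your proposal is correct and follows essentially the same route as the paper: both reduce $D_2(1,x)$ and $D_{2p}(1,x)$ to the linear map $1-2x$ on $\mathbb{F}_p$ (you via Lemma~\ref{LJ1}(i) and Fermat's little theorem, the paper via the $p$-cyclotomic coset fact, which is the same underlying reason) and then observe that $1-2x$ and $1-x$ are nonconstant linear polynomials, hence PPs. Your explicit check that the leading coefficients $-2$ and $-1$ are nonzero for odd $p$ is a slightly more careful statement of the paper's closing remark, but not a different argument.
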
 

\begin{proof}
Since $2$ and $2p$ belong to the same $p$-cyclotomic coset modulo $p$, for all $x\in \mathbb{F}_p$, we have $$D_{2}(1,x) = D_{2p}(1,x) = 1+(p-2)x,$$
and 
$$D_{2}(1,x) +x = D_{2p}(1,x) + x= 1+(p-1)x.$$
The rest of the proof follows from the fact that every linear polynomial in $\mathbb{F}_p[x]$ is a PP of $\mathbb{F}_p$. 
\end{proof} 

\begin{prop}
Let $p>3$ be an odd prime. If $n = 3, 3p$, then $D_{n}(1,x)$ is a CPP of $\mathbb{F}_p$
\end{prop}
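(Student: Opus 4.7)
The plan is to mirror exactly the proof of the preceding proposition for $n = 2, 2p$. First I would invoke Lemma~\ref{LJ1}(iii) (or more directly the identity in Lemma~\ref{LJ1}(i)--(ii)) to note that $3$ and $3p$ lie in the same $p$-cyclotomic coset modulo $p^2-1$, so $D_3(1,x) = D_{3p}(1,x)$ as functions on $\mathbb{F}_p$. Hence it suffices to treat $n = 3$.

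Next I would compute $D_3(1,x)$ directly from the recurrence $D_n(1,x) = D_{n-1}(1,x) - x D_{n-2}(1,x)$ with initial data $D_0(1,x) = 2$, $D_1(1,x) = 1$. This gives $D_2(1,x) = 1 - 2x$ and then $D_3(1,x) = 1 - 3x$, i.e.\
\[
D_3(1,x) = D_{3p}(1,x) = 1 + (p-3)x
\]
for all $x \in \mathbb{F}_p$. Consequently
\[
D_3(1,x) + x = 1 + (p-2)x.
\]

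Finally I would observe that both $p - 3$ and $p - 2$ are nonzero in $\mathbb{F}_p$ (this is exactly where the hypothesis $p > 3$ is used, since for $p = 3$ the leading coefficient of $D_3(1,x)$ vanishes). Since every linear polynomial $ax + b$ with $a \in \mathbb{F}_p^{\times}$ is a PP of $\mathbb{F}_p$, both $D_3(1,x)$ and $D_3(1,x) + x$ are PPs, which is precisely the definition of $D_3(1,x)$ being a CPP of $\mathbb{F}_p$.

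There is no genuine obstacle here; the proof is essentially mechanical once the explicit linear form of $D_3(1,x)$ is written down. The only subtlety worth flagging is the role of the hypothesis $p > 3$: for $p = 3$, the coefficient $p - 3 = 0$ collapses $D_3(1,x)$ to the constant $1$, so the proposition genuinely fails there and the restriction $p > 3$ is not cosmetic.
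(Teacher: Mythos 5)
Your proof is correct and follows essentially the same route as the paper: reduce $3p$ to $3$ via the $p$-cyclotomic coset, write $D_3(1,x)=1-3x$ explicitly, and observe that both it and $D_3(1,x)+x=1-2x$ are linear with nonzero leading coefficients (using $p>3$), hence PPs. Your explicit flagging of where $p>3$ is needed is a nice touch, and your sign $1+(p-3)x$ is the correct one (the paper's displayed $1-(p-3)x$ is a typo, as its own cycle-type theorem confirms).
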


\begin{proof}
Since $3$ and $3p$ belong to the same $p$-cyclotomic coset modulo $p$, for all $x\in \mathbb{F}_p$, we have 
$$D_{3}(1,x) = D_{3p}(1,x) = 1-(p-3)x,$$ 
and
$$D_{3}(1,x) +x = D_{3p}(1,x) + x= 1+(p-2)x.$$
The rest of the proof follows from the fact that every linear polynomial in $\mathbb{F}_p[x]$ is a PP of $\mathbb{F}_p$. 
\end{proof}

In \cite{HMSY}, the authors found sufficient conditions on $n$ for the polynomial $D_n(1,x)$ to be a PP of $\mathbb{F}_p$ when  $p \equiv 1\pmod{12}$, $p \equiv 5\pmod{12}$ or  $p \equiv 7\pmod{12}$. We present their results in the following three lemmas. 

\begin{lem}\label{L1}
Let $p \equiv 1\pmod{12}$ or $p \equiv 5\pmod{12}$. If $n = p+1$, then $D_{n}(1,x)$ is a PP of $\mathbb{F}_p$.
\end{lem}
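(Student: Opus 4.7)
The plan is to reduce the problem, via the closed-form expression from \cite[Proposition~5.1]{HMSY}, to a question about a power map on $\mathbb{F}_p$, and then check a $\gcd$ condition.

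First, I would invoke the identity
\[
D_{p+1}(1,x) \;=\; \frac{1}{2} \;+\; \frac{1}{2}(1-4x)^{\frac{p+1}{2}},
\]
already used in the fixed-point analysis (e.g.\ in the proof of Theorem~\ref{TTJ6}). The affine maps $x\mapsto 1-4x$ and $y\mapsto \tfrac12 + \tfrac12 y$ are clearly permutations of $\mathbb{F}_p$, so $D_{p+1}(1,x)$ is a PP of $\mathbb{F}_p$ if and only if the monomial $y\mapsto y^{(p+1)/2}$ is a PP of $\mathbb{F}_p$.

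Next, by the standard monomial criterion (Example~2 in Section~\ref{S1}), $y^{(p+1)/2}$ is a PP of $\mathbb{F}_p$ if and only if
\[
\gcd\!\left(\tfrac{p+1}{2},\, p-1\right) \;=\; 1.
\]
Both hypotheses $p\equiv 1\pmod{12}$ and $p\equiv 5\pmod{12}$ imply $p\equiv 1\pmod 4$. Writing $p=4k+1$, we get $(p+1)/2 = 2k+1$ (odd) and $p-1 = 4k$. Therefore
\[
\gcd(2k+1,\,4k) \;=\; \gcd(2k+1,\,k) \;=\; \gcd(1,\,k) \;=\; 1,
\]
using first that $2k+1$ is odd (so the factor $4$ contributes nothing) and then that $2k+1 \equiv 1 \pmod k$.

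Combining these observations, $y\mapsto y^{(p+1)/2}$ permutes $\mathbb{F}_p$, and hence so does $D_{p+1}(1,x)$. There is really no obstacle here: the only subtlety is ensuring the two residue classes $1$ and $5$ modulo $12$ both fold into the single congruence $p\equiv 1\pmod 4$ that drives the $\gcd$ computation. The condition modulo $3$ plays no role, which is consistent with the fact that the excluded case $p\equiv 3\pmod 4$ (covered by $p\equiv 7, 11\pmod{12}$) gives $\gcd((p+1)/2, p-1)=2$, so the monomial argument genuinely fails there.
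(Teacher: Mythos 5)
Your proof is correct. Note that the paper itself offers no proof of this lemma: it is quoted, together with the two companion lemmas for $n=p+2$ and $n=2p+1$, as a known result from \cite{HMSY}, so there is no internal argument to compare against. Your derivation is a valid self-contained one: the closed form $D_{p+1}(1,x)=\frac{1}{2}+\frac{1}{2}(1-4x)^{\frac{p+1}{2}}$ exhibits $D_{p+1}(1,x)$ as a composition of the affine bijections $x\mapsto 1-4x$ and $y\mapsto \frac{1}{2}+\frac{1}{2}y$ (both invertible since $p\geq 5$) with the monomial $y\mapsto y^{\frac{p+1}{2}}$, and your computation $\gcd\bigl(\frac{p+1}{2},p-1\bigr)=1$ for $p\equiv 1\pmod 4$ is exact, which covers both residue classes $1$ and $5$ modulo $12$. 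A small bonus of your argument, worth keeping, is that it also gives the converse for this index: when $p\equiv 3\pmod 4$ the gcd equals $2$, so $D_{p+1}(1,x)$ is not a PP, consistent with Conjecture~\ref{CCC1} and with the paper's use (in the CPP subsection) of the quadratic-character behaviour of $(1-4x)^{\frac{p+1}{2}}$, which is the same underlying mechanism phrased multiplicatively rather than through the gcd criterion.
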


\begin{lem}\label{L3}
Let $p \equiv 1\pmod{12}$ or $p \equiv 7\pmod{12}$. If $n = p+2$, then $D_{n}(1,x)$ is a PP of $\mathbb{F}_p$.
\end{lem}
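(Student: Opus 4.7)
The plan is to exploit the closed form
\[
D_{p+2}(1,x) \;=\; \tfrac{1}{2}+\tfrac{1}{2}(1-4x)^{\frac{p+1}{2}}-x,
\]
which was recorded in \cite[Proposition 5.1]{HMSY} and already used earlier in this paper (in the proof of Theorem~\ref{TTJ6}), and reduce permutation behaviour to a single quadratic-residue condition. First I would perform the substitution $y=1-4x$ (an affine bijection of $\mathbb{F}_p$ since $p>3$); multiplying through by $4$, the polynomial $D_{p+2}(1,x)$ is a PP of $\mathbb{F}_p$ if and only if
\[
g(y) \;:=\; 2\,y^{\frac{p+1}{2}} + y + 1
\]
is a PP of $\mathbb{F}_p$. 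Applying Euler's criterion $y^{(p-1)/2}\in\{0,1,-1\}$, the map $g$ breaks into three pieces:
\[
g(0)=1,\qquad g(y)=3y+1 \text{ if } y\in\mathrm{QR}^{\times},\qquad g(y)=1-y \text{ if } y\in\mathrm{NQR}.
\]

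Next I would verify that each piece is injective on its part of the domain and count. On $\{0\}\cup\mathrm{QR}^{\times}$ (cardinality $\frac{p+1}{2}$), the map is affine with nonzero slope $3$ on the quadratic residues, and its value $1$ at $y=0$ is not attained on $\mathrm{QR}^{\times}$ since $3y+1=1$ forces $y=0$; hence injectivity on this block. On $\mathrm{NQR}$ (cardinality $\frac{p-1}{2}$), the map $y\mapsto 1-y$ is affine with nonzero slope, hence injective. The images therefore account for $\frac{p+1}{2}+\frac{p-1}{2}=p$ elements with multiplicity, so $g$ is a PP iff the two image blocks are disjoint.

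The key step is the disjointness check, which is where the congruence hypothesis enters. One has $g(0)=1\notin\{1-y:y\in\mathrm{NQR}\}$ automatically. A collision of the form $3y_1+1=1-y_2$ with $y_1\in\mathrm{QR}^{\times}$ and $y_2\in\mathrm{NQR}$ would give $y_2=-3y_1$, i.e.\ $\bigl(\tfrac{-3}{p}\bigr)=-1$. Thus disjointness is equivalent to $\bigl(\tfrac{-3}{p}\bigr)=1$. Using the Legendre symbol formulas from Section~\ref{S2}, for $p\equiv 1\pmod{12}$ both $\bigl(\tfrac{-1}{p}\bigr)$ and $\bigl(\tfrac{3}{p}\bigr)$ equal $1$, while for $p\equiv 7\pmod{12}$ both equal $-1$; in either congruence class $\bigl(\tfrac{-3}{p}\bigr)=1$, so no collision is possible and $g$ (equivalently $D_{p+2}(1,x)$) permutes $\mathbb{F}_p$.

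I do not expect any genuine obstacle: the closed form is given for free by \cite{HMSY}, the reduction to the piecewise-affine map $g$ is one substitution, and the only nontrivial arithmetic input is the routine Legendre computation $\bigl(\tfrac{-3}{p}\bigr)=\bigl(\tfrac{-1}{p}\bigr)\bigl(\tfrac{3}{p}\bigr)$ combined with the tables in Section~\ref{S2}. The same template (with slope $-3$ replaced by another signed quadratic constant) should recover Lemma~\ref{L1} and the $n=2p+1$ case uniformly, which is a useful consistency check for the write-up.
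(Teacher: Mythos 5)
Your argument is correct. A point worth noting for context: the paper itself does not prove this statement at all --- it is one of three lemmas quoted verbatim from \cite{HMSY} (``We present their results in the following three lemmas''), so there is no internal proof to compare against, and your piecewise-affine reduction supplies a complete, self-contained proof. Your computation is sound: with $y=1-4x$ one gets $4\,D_{p+2}(1,x)=2y^{\frac{p+1}{2}}+y+1=:g(y)$, Euler's criterion splits $g$ into $g(0)=1$, $g(y)=3y+1$ on nonzero squares, $g(y)=1-y$ on nonsquares, each block is injective, and the only possible cross-block collision $3y_1+1=1-y_2$ forces $y_2=-3y_1$, so injectivity of $g$ is exactly the condition $\bigl(\tfrac{-3}{p}\bigr)=1$, which holds precisely when $p\equiv 1$ or $7\pmod{12}$ by the Legendre-symbol tables in Section~\ref{S2} (and fails for $p\equiv 5,11\pmod{12}$, consistent with Conjecture~\ref{CCC1}). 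Your method is also very much in the spirit of what the paper does prove by hand: the CPP result for $D_{p+1}(1,x)$ when $p\equiv1\pmod{12}$ is established by the same partition of $\mathbb{F}_p$ into $\{0\}$, squares, and nonsquares with a blockwise check, and your observation that the same template recovers the $n=p+1$ case (collision constant $-1$ instead of $-3$) and the $n=2p+1$ case (since $D_{2p+1}(1,x)=D_{p+2}(1,x)$ on $\mathbb{F}_p$) is accurate. The one thing your write-up gets for free but should state explicitly in a final version is that $3\not\equiv 0\pmod p$ (true since $p>3$), so the slope $3$ is invertible and $-3y_1\neq 0$.
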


\begin{lem}
Let $p \equiv 1\pmod{12}$ or $p \equiv 7\pmod{12}$. If $n = 2p+1$, then $D_{n}(1,x)$ is a PP of $\mathbb{F}_p$.
\end{lem}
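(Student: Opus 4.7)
The plan is to derive this lemma directly from the preceding Lemma~\ref{L3} using the structural properties collected in Lemma~\ref{LJ1}. The key observation is the arithmetic identity
\[
(p+2)\cdot p \;=\; p^2 + 2p \;=\; (p^2-1) + (2p+1),
\]
so $(p+2)p \equiv 2p+1 \pmod{p^2-1}$. In particular, the exponents $p+2$ and $2p+1$ lie in the same $p$-cyclotomic coset modulo $p^2-1$.

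First I would apply Lemma~\ref{LJ1}(i) with $n = p+2$ to get the polynomial identity $D_{(p+2)p}(1,x) = \bigl(D_{p+2}(1,x)\bigr)^{p}$ in $\mathbb{F}_p[x]$. Next, by Lemma~\ref{LJ1}(ii) and the congruence above, $D_{(p+2)p}(1,c) = D_{2p+1}(1,c)$ for every $c \in \mathbb{F}_p$. Combining these gives
\[
D_{2p+1}(1,c) \;=\; \bigl(D_{p+2}(1,c)\bigr)^{p} \qquad \text{for all } c \in \mathbb{F}_p.
\]
Since $D_{p+2}(1,c) \in \mathbb{F}_p$, Fermat's little theorem yields $\bigl(D_{p+2}(1,c)\bigr)^p = D_{p+2}(1,c)$, so $D_{2p+1}(1,x)$ and $D_{p+2}(1,x)$ induce the same function on $\mathbb{F}_p$. (Equivalently, one may appeal directly to Lemma~\ref{LJ1}(iii), which packages exactly this implication.)

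Finally, invoke Lemma~\ref{L3}: when $p \equiv 1 \pmod{12}$ or $p \equiv 7 \pmod{12}$, the polynomial $D_{p+2}(1,x)$ is a PP of $\mathbb{F}_p$. Since $D_{2p+1}(1,x)$ agrees with it as a function on $\mathbb{F}_p$, it is also a PP of $\mathbb{F}_p$. There is no substantive obstacle here; the proof is an immediate corollary, and the only thing one has to notice is the cyclotomic-coset relation between $p+2$ and $2p+1$ modulo $p^2-1$.
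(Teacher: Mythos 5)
Your argument is correct and matches the paper's own justification: the paper states this lemma as a result of \cite{HMSY} and, in the remark immediately following, notes precisely that $(2p+1)p \equiv p+2 \pmod{p^2-1}$, so $p+2$ and $2p+1$ lie in the same $p$-cyclotomic coset and hence $D_{2p+1}(1,x)$ permutes $\mathbb{F}_p$ exactly when $D_{p+2}(1,x)$ does (indeed $D_{p+2}(1,x)=D_{2p+1}(1,x)$ as functions on $\mathbb{F}_p$). Your use of Lemma~\ref{LJ1}(i)--(iii) together with Fermat's little theorem and Lemma~\ref{L3} is exactly this argument, spelled out in full.
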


\begin{rmk}
Since $np=(2p)=2p^2+p \equiv 2+p\pmod{p^2-1},$ $p+2$ and $2p+1$ belong to the same $p$-cyclotomic coset modulo $p^2-1$. Thus, $D_{2p+1}(1,x)$ is a PP of $\mathbb{F}_p$ if and only if $D_{2p+1}(1,x)$ is a PP of $\mathbb{F}_p$. Moreover,
 $D_{p+2}(1,x)=D_{2p+1}(1,x)$ for all $x\in \mathbb{F}_p$.
\end{rmk}

The natural question to ask is `` Are all the permutation polynomials in the aforementioned lemmas complete permutation polynomials of $\mathbb{F}_p$ as well? " It turns out that not all of the polynomials are complete permutation polynomials of $\mathbb{F}_p$. 

\begin{prop}\label{L2}
Let $p \equiv 1\pmod{12}$. If $n = p+1$, then $D_{n}(1,x)$ is a CPP of $\mathbb{F}_p$.
\end{prop}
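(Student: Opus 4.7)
The plan is to note first that, by Lemma~\ref{L1}, $D_{p+1}(1,x)$ is already a PP of $\mathbb{F}_p$ when $p \equiv 1\pmod{12}$, so only the permutation property of $f(x) := D_{p+1}(1,x) + x$ remains to be established. I would leverage the explicit formula $D_{p+1}(1,x) = \tfrac{1}{2} + \tfrac{1}{2}(1-4x)^{(p+1)/2}$ from \cite[Proposition 5.1]{HMSY}, together with the identity $u^{(p+1)/2} = u \cdot u^{(p-1)/2}$; since $u^{(p-1)/2}$ equals the Legendre symbol $\left(\tfrac{u}{p}\right)$ for $u \neq 0$, computing case by case on $u = 1-4x$ and simplifying yields the clean piecewise description
\[
f(x) = \begin{cases} 1 - x, & \text{if } 1-4x \text{ is a nonzero QR,} \\ 3x, & \text{if } 1-4x \text{ is a QNR,} \\ 3/4, & \text{if } x = 1/4. \end{cases}
\]
Write $A = \{x \in \mathbb{F}_p : 1-4x \text{ is a nonzero QR}\}$ and $B = \{x \in \mathbb{F}_p : 1-4x \text{ is a QNR}\}$, so that $\mathbb{F}_p = A \sqcup B \sqcup \{1/4\}$ with $|A| = |B| = (p-1)/2$.

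To show $f$ is a PP, I would verify it is injective by checking each pair of branches. The within-branch collisions are immediate: $x \mapsto 1-x$ is injective on $A$, and $x \mapsto 3x$ is injective on $B$ since $p > 3$. A collision $f(x) = 3/4$ with $x \in A \cup B$ forces $x = 1/4$, but $1/4$ lies in neither $A$ nor $B$, so these cases are ruled out.

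The decisive case is a cross-branch collision $f(x_1) = f(x_2)$ with $x_1 \in A$ and $x_2 \in B$, which forces $1 - x_1 = 3x_2$, equivalently $1 - 4x_1 = -3(1 - 4x_2)$. Taking Legendre symbols gives $\left(\tfrac{1-4x_1}{p}\right) = \left(\tfrac{-3}{p}\right)\left(\tfrac{1-4x_2}{p}\right)$. Since $p \equiv 1 \pmod{12}$, the formulas recorded in Section~\ref{S2} yield $\left(\tfrac{-1}{p}\right) = 1$ and $\left(\tfrac{3}{p}\right) = 1$, whence $\left(\tfrac{-3}{p}\right) = 1$. Therefore $\left(\tfrac{1-4x_1}{p}\right) = -1$, contradicting $x_1 \in A$. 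This closes the injectivity argument and shows that $f$ permutes $\mathbb{F}_p$, completing the proof that $D_{p+1}(1,x)$ is a CPP.

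The main obstacle is precisely the Legendre-symbol computation ruling out the cross-branch collision, and this is where the congruence $p \equiv 1 \pmod{12}$ is decisive: any other residue class among the four listed in Conjecture~\ref{CCC1} where $D_{p+1}(1,x)$ is a PP would either flip $\left(\tfrac{-1}{p}\right)$ or $\left(\tfrac{3}{p}\right)$ and thereby force $\left(\tfrac{-3}{p}\right) = -1$, at which point the cross-branch case ceases to be vacuous and the argument breaks. No heavier machinery is required beyond the explicit formula for $D_{p+1}$ and the elementary quadratic-residue formulas of Section~\ref{S2}.
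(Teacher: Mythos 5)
Your proof is correct and follows essentially the same route as the paper: both start from the explicit formula $D_{p+1}(1,x)=\tfrac{1}{2}+\tfrac{1}{2}(1-4x)^{(p+1)/2}$, split $\mathbb{F}_p$ according to the quadratic character of $1-4x$, and use that $-3$ is a quadratic residue when $p\equiv 1\pmod{12}$. The only cosmetic difference is that the paper first substitutes $u=1-4x$ to reduce to showing $2u^{(p+1)/2}-u$ permutes $\mathbb{F}_p$ (identity on squares, $u\mapsto-3u$ on nonsquares), whereas you keep the variable $x$ and rule out the cross-branch collision via the Legendre symbol of $-3$, which is the same computation.
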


\begin{proof}

From Lemma~\ref{L1}, we know that $D_{p+1}(1,x)$ is a PP of $\mathbb{F}_p$. We only need to show that $D_{p+1}(1,x) + x$ is a PP of $\mathbb{F}_p$. From \cite[Proposition 5.1]{HMSY}, we have $$D_{p+1}(1,x)=\frac{1}{2}+\frac{1}{2}(1-4x)^{\frac{p+1}{2}},$$

which implies
\begin{align*}
D_{p+1}(1,x) + x &= \frac{1}{2}+\frac{1}{2}(1-4x)^{\frac{p+1}{2}} +x \\
&= \left(\frac{1}{2}+\frac{1}{2}x^{\frac{p+1}{2}}+\frac{1}{4} - \frac{x}{4}\right)\circ (1-4x).
\end{align*}

Thus, $D_{p+1}(1,x) + x$ is a PP of $\mathbb{F}_p$ if and only if $\displaystyle{f(x)=2x^\frac{p+1}{2} - x}$ is a PP of $\mathbb{F}_p$. We consider the following partition of $\mathbb{F}_p$:

$$\mathbb{F}_p = \{0\} \cup (\mathbb{F}_p^{\times})^2 \cup (\mathbb{F}_p^{\times} \setminus (\mathbb{F}_p^{\times})^2),$$

where $(\mathbb{F}_p^{\times})^2$ is the set of quadratic residues in $\mathbb{F}_p$ and $\mathbb{F}_p^{\times} \setminus (\mathbb{F}_p^{\times})^2$ is the set of quadratic nonresidues in $\mathbb{F}_p$. 

Note that $f(0) = 0$. Let $\alpha \in (\mathbb{F}_p^{\times})^2$,
Then, $\alpha = \beta^2$, where $\beta \neq 0$.
\begin{align*}
    f(\alpha) &= \alpha (2\alpha^\frac{p-1}{2} - 1)\\
    &= \alpha(2(\beta^2)^\frac{p-1}{2} - 1)\\
    &= \alpha(2\beta^{(p-1)} - 1)\\
    &= \alpha(2\cdot 1 - 1)\\
    &= \alpha
\end{align*}

This shows that $f$ maps $(\mathbb{F}_p^{\times})^2$ to itself. Since $f$ is the identity map on $(\mathbb{F}_p^{\times})^2$, $f$ is clearly one-to-one on $(\mathbb{F}_p^{\times})^2$.

Let $\gamma \in \mathbb{F}_p^{\times}\setminus (\mathbb{F}_p^{\times})^2$.
\begin{align*}
    f(\gamma) &= \gamma (2\alpha^\frac{p-1}{2} - 1)\\
    &=\gamma(-2-1)\\
    &= -3\gamma
\end{align*}
Since $p\equiv 1\pmod{12}$, $3$ and $-1$ are quadratic residues in $\mathbb{F}_p$, which implies $-3\gamma$ is a quadratic nonresidue in $\mathbb{F}_p$. This shows that $f$ maps $\mathbb{F}_p^{\times} \setminus (\mathbb{F}_p^{\times})^2$ to itself, and it is clearly one-to-one on $\mathbb{F}_p^{\times} \setminus (\mathbb{F}_p^{\times})^2$. Thus, $f$ is a one to one on $\mathbb{F}_p$.
Therefore, we have shown that $D_{p+1}(1,x)+x$ is a PP on $\mathbb{F}_p$, and thus $D_{p+1}(1,x)$ is a CPP on $\mathbb{F}_p$.
\end{proof}

\begin{prop}
Let $p \equiv 5\pmod{12}$. If $n = p+1$, then $D_{n}(1,x)$ is not a CPP of $\mathbb{F}_p$. 
\end{prop}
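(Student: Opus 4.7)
The plan is to follow exactly the reduction carried out in the proof of Proposition~\ref{L2} and then exploit the change in the Legendre symbol of $3$ when we pass from $p\equiv 1\pmod{12}$ to $p\equiv 5\pmod{12}$. Concretely, I will show that $D_{p+1}(1,x)+x$ fails to be surjective on $\mathbb{F}_p$, so $D_{p+1}(1,x)$ cannot be a CPP.

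First I would use \cite[Proposition 5.1]{HMSY} to write
\[
D_{p+1}(1,x)+x=\frac12+\frac12(1-4x)^{\frac{p+1}{2}}+x=\Bigl(\frac12+\frac12 x^{\frac{p+1}{2}}+\frac14-\frac{x}{4}\Bigr)\circ(1-4x),
\]
so $D_{p+1}(1,x)+x$ is a PP of $\mathbb{F}_p$ if and only if $f(x)=2x^{\frac{p+1}{2}}-x$ is a PP of $\mathbb{F}_p$. I would then partition $\mathbb{F}_p=\{0\}\cup(\mathbb{F}_p^{\times})^{2}\cup\bigl(\mathbb{F}_p^{\times}\setminus(\mathbb{F}_p^{\times})^{2}\bigr)$ and evaluate $f$ on each piece exactly as in Proposition~\ref{L2}: $f(0)=0$; for a nonzero square $\alpha$ one has $\alpha^{(p-1)/2}=1$, giving $f(\alpha)=\alpha(2\cdot 1-1)=\alpha$; and for a nonsquare $\gamma$ one has $\gamma^{(p-1)/2}=-1$, giving $f(\gamma)=\gamma(2\cdot(-1)-1)=-3\gamma$.

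The key step is the Legendre symbol computation, which is where the hypothesis $p\equiv 5\pmod{12}$ plays a different role than in Proposition~\ref{L2}. From the preliminaries in Section~\ref{S2}, when $p\equiv 5\pmod{12}$ we have $p\equiv 1\pmod 4$, so $-1$ is a quadratic residue, and $p\equiv 5\pmod{12}$ forces $3$ to be a quadratic nonresidue. Hence $-3$ is a quadratic nonresidue, and the product $-3\gamma$ of two quadratic nonresidues is a quadratic residue. Therefore $f$ sends every nonsquare into $(\mathbb{F}_p^{\times})^{2}$, while it also sends $\{0\}\cup(\mathbb{F}_p^{\times})^{2}$ into itself.

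Putting these together, the image of $f$ is contained in $\{0\}\cup(\mathbb{F}_p^{\times})^{2}$, which has cardinality $\tfrac{p+1}{2}<p$. Thus $f$ cannot be a bijection of $\mathbb{F}_p$, so $D_{p+1}(1,x)+x$ is not a PP of $\mathbb{F}_p$, and consequently $D_{p+1}(1,x)$ is not a CPP of $\mathbb{F}_p$. I expect no serious obstacle here; the one sign/quadratic-character bookkeeping (confirming $\left(\tfrac{-3}{p}\right)=-1$ precisely when $p\equiv 5\pmod{12}$) is the only computational subtlety, and it is handled cleanly by the formulas for $\left(\tfrac{-1}{p}\right)$ and $\left(\tfrac{3}{p}\right)$ already recorded in Section~\ref{S2}.
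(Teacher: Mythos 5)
Your proposal is correct and follows essentially the same route as the paper: it reduces (via Proposition~\ref{L2}) to the map $f(x)=2x^{\frac{p+1}{2}}-x$, evaluates $f$ on squares and nonsquares, and uses that for $p\equiv 5\pmod{12}$ the element $-3$ is a nonresidue so that $f$ sends all of $\mathbb{F}_p$ into $\{0\}\cup(\mathbb{F}_p^{\times})^2$. The only cosmetic difference is that you phrase the failure as non-surjectivity of $f$ while the paper phrases it as non-injectivity; the underlying counting argument is identical.
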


\begin{proof} 
Consider the partition

$$\mathbb{F}_p = \{0\} \cup (\mathbb{F}_p^{\times})^2 \cup \mathbb{F}_p^{\times} \setminus (\mathbb{F}_p^{\times})^2.$$

Since $p\equiv 5\pmod{12}$, $3$ is a quadratic nonresidue and $-1$ is a quadratic residue. This implies that $f$ maps both $(\mathbb{F}_p^{\times})^2$ and $\mathbb{F}_p^{\times} \setminus (\mathbb{F}_p^{\times})^2$ to $(\mathbb{F}_p^{\times})^2$. Thus, $f$ is not one-to-one on $\mathbb{F}_p$.
Therefore, $D_{p+1}(1,x)$ is not a CPP on $\mathbb{F}_p$. 
\end{proof}

\begin{prop}\label{L4}
Let $p \equiv 1\pmod{12}$. If $n = p+2$, then $D_{n}(1,x)$ is CPP of $\mathbb{F}_p$.
\end{prop}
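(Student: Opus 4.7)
The plan is to observe that the identity $D_{p+2}(1,x) + x = D_{p+1}(1,x)$ holds over $\mathbb{F}_p$, which makes the result almost immediate from earlier lemmas. By Lemma~\ref{L3}, $D_{p+2}(1,x)$ is already a PP of $\mathbb{F}_p$ when $p\equiv 1 \pmod{12}$, so the only remaining task is to show $D_{p+2}(1,x) + x$ is a PP as well.

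To establish the identity, I would invoke \cite[Proposition 5.1]{HMSY}, as used in the proof of Theorem~\ref{TTJ6}, which gives
\[
D_{p+1}(1,x) = \frac{1}{2} + \frac{1}{2}(1-4x)^{\frac{p+1}{2}}, \qquad D_{p+2}(1,x) = \frac{1}{2} + \frac{1}{2}(1-4x)^{\frac{p+1}{2}} - x.
\]
Subtracting the second from the first yields $D_{p+2}(1,x) + x = D_{p+1}(1,x)$ for all $x \in \mathbb{F}_p$.

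Now by Lemma~\ref{L1}, when $p \equiv 1 \pmod{12}$ the polynomial $D_{p+1}(1,x)$ is a PP of $\mathbb{F}_p$. Hence $D_{p+2}(1,x) + x$ is a PP of $\mathbb{F}_p$, and combined with the fact that $D_{p+2}(1,x)$ itself is a PP (Lemma~\ref{L3}), this shows $D_{p+2}(1,x)$ is a CPP of $\mathbb{F}_p$.

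There is no real obstacle here; the entire proof reduces to spotting the telescoping relation $D_{p+2}(1,x) + x = D_{p+1}(1,x)$ and then citing Lemma~\ref{L1} and Lemma~\ref{L3}. The only subtlety is making sure the closed-form expressions from \cite[Proposition 5.1]{HMSY} are valid under the hypothesis $p \equiv 1 \pmod{12}$, which they are, since that hypothesis is strictly stronger than what is required for both expressions to hold.
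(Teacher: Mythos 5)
Your proposal is correct and follows essentially the same route as the paper: both use the closed forms from \cite[Proposition 5.1]{HMSY} to get the identity $D_{p+2}(1,x)+x=D_{p+1}(1,x)$ and then cite the fact that $D_{p+1}(1,x)$ is a PP when $p\equiv 1\pmod{12}$. Your explicit appeal to Lemma~\ref{L3} for $D_{p+2}(1,x)$ itself being a PP is a small completeness point the paper leaves implicit, but the argument is the same.
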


\begin{proof}
From \cite[Proposition 5.1]{HMSY}, we have 

$$D_{p+2}(1,x)= \frac{1}{2}+\frac{1}{2}(1-4x)^\frac{p+1}{2}-x.$$

Then,  
\begin{align*}
D_{p+2}(1,x) + x
&= \frac{1}{2}+\frac{1}{2}(1-4x)^{\frac{p+1}{2}}\\
&=D_{p+1}(1,x)
\end{align*}
The proof follows from the fact that $D_{p+1}(1,x)$ is a PP of $\mathbb{F}_p$ whenever $p \equiv 1\pmod{12}$.
\end{proof}

\begin{prop}\label{L5}
Let $p \equiv 7\pmod{12}$. If $n = p+2$, then $D_{n}(1,x)$ is not CPP of $\mathbb{F}_p$.
\end{prop}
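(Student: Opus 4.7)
The plan is to imitate the proof of Proposition~\ref{L4} exactly up to the key reduction, and then observe that the arithmetic obstruction reverses sign. From \cite[Proposition 5.1]{HMSY} we have
$$D_{p+2}(1,x) = \tfrac{1}{2}+\tfrac{1}{2}(1-4x)^{\frac{p+1}{2}} - x,$$
so $D_{p+2}(1,x) + x = \tfrac{1}{2}+\tfrac{1}{2}(1-4x)^{\frac{p+1}{2}} = D_{p+1}(1,x)$. Therefore $D_{p+2}(1,x)$ is a CPP of $\mathbb{F}_p$ if and only if $D_{p+1}(1,x)$ is a PP of $\mathbb{F}_p$, and the task becomes showing the latter fails when $p\equiv 7\pmod{12}$.

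Next I would make the substitution $y = 1-4x$, which is a bijection of $\mathbb{F}_p$, to reduce the question of whether $D_{p+1}(1,x)$ permutes $\mathbb{F}_p$ to whether $g(y):=y^{\frac{p+1}{2}} = y \cdot y^{\frac{p-1}{2}}$ does. Writing $\mathbb{F}_p = \{0\} \cup (\mathbb{F}_p^{\times})^2 \cup (\mathbb{F}_p^{\times} \setminus (\mathbb{F}_p^{\times})^2)$, Euler's criterion gives $g(0)=0$, $g(y)=y$ for $y\in (\mathbb{F}_p^{\times})^2$, and $g(y)=-y$ for $y\in \mathbb{F}_p^{\times}\setminus (\mathbb{F}_p^{\times})^2$. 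So $g$ fixes the quadratic residues pointwise.

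Finally I would use the hypothesis $p\equiv 7\pmod{12}$, which in particular gives $p\equiv 3\pmod 4$, hence $-1$ is a quadratic nonresidue of $\mathbb{F}_p$. Thus for $y\in \mathbb{F}_p^{\times}\setminus (\mathbb{F}_p^{\times})^2$ we have $g(y)=-y \in (\mathbb{F}_p^{\times})^2$, meaning both the quadratic residues and the quadratic nonresidues are mapped into $(\mathbb{F}_p^{\times})^2$. Since $|(\mathbb{F}_p^{\times})^2|=\frac{p-1}{2}$ cannot contain the image of $p-1$ nonzero elements injectively, $g$ is not one-to-one, so $D_{p+1}(1,x)$ is not a PP of $\mathbb{F}_p$, and consequently $D_{p+2}(1,x)$ is not a CPP of $\mathbb{F}_p$.

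The argument is essentially mechanical once the identity $D_{p+2}(1,x)+x = D_{p+1}(1,x)$ is in hand, so there is no real obstacle; the only sensitive point is keeping track of the Legendre symbols, and here the mild asymmetry between the cases $p\equiv 1\pmod{12}$ and $p\equiv 7\pmod{12}$ comes entirely from $\left(\frac{-1}{p}\right)$ flipping sign, which collapses the image of $g$ into the quadratic residues.
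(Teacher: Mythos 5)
Your proof is correct and takes essentially the same route as the paper: both rest on the identity $D_{p+2}(1,x)+x=D_{p+1}(1,x)$, reducing the claim to the failure of $D_{p+1}(1,x)$ to be a PP of $\mathbb{F}_p$ when $p\equiv 7\pmod{12}$. The only difference is that the paper cites that failure as known (from \cite{HMSY}), whereas you prove it directly by passing to $y\mapsto y^{\frac{p+1}{2}}$ and using that $-1$ is a quadratic nonresidue for $p\equiv 3\pmod{4}$, which is a correct and self-contained way to fill in the cited fact.
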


\begin{proof}
The proof follows from the fact that $D_{p+2}(1,x) + x=D_{p+1}(1,x)$ and $D_{p+1}(1,x)$ is not a PP of $\mathbb{F}_p$ whenever $p\equiv 7\pmod{12}$. 
\end{proof}

\begin{rmk}
Since $D_{p+2}(1,x)=D_{2p+1}(1,x)$ for all $x\in \mathbb{F}_p$. We have the following:
\begin{enumerate}
\item Let $p \equiv 1\pmod{12}$. If $n = 2p+1$, then $D_{n}(1,x)$ is CPP of $\mathbb{F}_p$.
\item Let $p \equiv 7\pmod{12}$. If $n = 2p+1$, then $D_{n}(1,x)$ is not CPP of $\mathbb{F}_p$.
\end{enumerate} 
\end{rmk}

We now state a conjecture on complete permutation polynomials arising from reversed Dickson polynomials of the first kind that is strongly supported by our computer search results. Conjecture~\ref{CJ10C} is a consequence of Conjecture~\ref{CCC1} and the results derived in this subsection. 

\begin{conj}\label{CJ10C}
Let $p$ be an odd prime and let $1\leq n \leq p^2-1.$ Then $D_n(1,x)$ is a CPP on $\mathbb{F}_p$ if and only if 

\[
n = \left\{
\begin{alignedat}{2}
  & 2,2p,3,3p,p+1,p+2,2p+1 & \quad & \text{if}\ p\equiv 1 \pmod{12},\\
  & 2,2p,3,3p              & \quad & \text{if}\ p\equiv 3 \pmod{4} \,\,\textnormal{or}\ p\equiv 5 \pmod{12}.
\end{alignedat}
\right.
\]
\end{conj}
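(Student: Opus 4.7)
The plan is to treat Conjecture~\ref{CJ10C} as a direct consequence of Conjecture~\ref{CCC1} together with the explicit computations already carried out in this subsection. Since any CPP is in particular a PP, the necessity direction would proceed by first invoking Conjecture~\ref{CCC1} to narrow the candidate indices to the four lists given there, indexed by $p\bmod 12$, and then by eliminating those indices that yield a PP which fails to give a PP when $x$ is added. Concretely, for $p\equiv 1\pmod{12}$ the candidates are $\{2,2p,3,3p,p+1,p+2,2p+1\}$, and Propositions~\ref{L2} and~\ref{L4} together with the identity $D_{p+2}(1,x)=D_{2p+1}(1,x)$ already show all seven are CPPs. For $p\equiv 5\pmod{12}$, Conjecture~\ref{CCC1} restricts to $\{2,2p,3,3p,p+1\}$ and Proposition~7 of this subsection removes $p+1$; for $p\equiv 7\pmod{12}$, Conjecture~\ref{CCC1} restricts to $\{2,2p,3,3p,p+2,2p+1\}$ and Proposition~\ref{L5} (together with the remark) removes $p+2$ and $2p+1$. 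For $p\equiv 11\pmod{12}$ Conjecture~\ref{CCC1} already restricts the set to $\{2,2p,3,3p\}$. In all cases the surviving indices $\{2,2p,3,3p\}$ are CPPs by the two linear Propositions at the start of this subsection, so the sufficiency direction collapses to the identity check $D_n(1,x)+x=$ linear with nonzero leading coefficient.

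Consequently, the only genuinely new content in proving Conjecture~\ref{CJ10C} is the handling of the indices peculiar to each residue class, and all of these have already been settled. The clean write-up would therefore be organized as a single \textbf{Proof (conditional on Conjecture~\ref{CCC1})} that splits into four cases according to $p\bmod 12$, in each case listing the PP candidates from Conjecture~\ref{CCC1}, then invoking the appropriate proposition or the linearity of $D_n(1,x)$ (for the four always-linear indices) to decide whether $D_n(1,x)+x$ is a PP.

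The main obstacle is, of course, Conjecture~\ref{CCC1} itself, which is currently open and is only verified for $p=5,7$ in Sections~\ref{S5}--\ref{S6}. An unconditional proof would require first establishing Conjecture~\ref{CCC1}, i.e.\ showing that no index outside the listed set gives a PP on $\mathbb{F}_p$. A plausible long-term strategy, following \cite{HL} and the two special cases proved later in this paper, would be to use the substitution $x=y(1-y)$ to rewrite $D_n(1,x)=y^n+(1-y)^n$ over $\mathbb{F}_{p^2}$, reduce the permutation question on $\mathbb{F}_p$ to a $2$-to-$1$ / identification problem on $\mathbb{F}_{p^2}$, and exploit the $p$-cyclotomic coset structure from Lemma~\ref{LJ1} together with fine information about character sums or the value distribution of the map $y\mapsto y^n+(1-y)^n$; but producing such an argument uniformly in $p$ is precisely the difficulty that has kept Conjecture~\ref{CCC1} open. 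Once that obstacle is surmounted, the CPP statement would follow immediately by the bookkeeping in the previous paragraph, with no further analytic work required.
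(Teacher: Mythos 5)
Your conditional derivation is exactly what the paper intends: the statement is presented only as a conjecture, with the remark that it is a consequence of Conjecture~\ref{CCC1} together with the results of that subsection, and your case-by-case bookkeeping (the linear indices $2,2p,3,3p$; Propositions~\ref{L2} and~\ref{L4} with the accompanying remark for $p\equiv 1\pmod{12}$; the non-CPP propositions eliminating $p+1$ for $p\equiv 5\pmod{12}$ and $p+2$, $2p+1$ for $p\equiv 7\pmod{12}$) is precisely that derivation, with the genuine obstruction being Conjecture~\ref{CCC1} itself, as you correctly note. The only small caveat is that the conjecture as stated allows $p=3$ (included in $p\equiv 3\pmod 4$), where Conjecture~\ref{CCC1} and the $n=3,3p$ propositions do not apply and Theorem~\ref{TJ191} gives a different answer, so, like the paper, you are implicitly assuming $p>3$.
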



\section{Reversed Dickson polynomials of the second kind}\label{S4}

\subsection{Background}

The $n$th reversed Dickson polynomial of the second kind is given by $$\displaystyle{E_n(1,x)=\sum_{i=0}^{\lfloor n/2\rfloor}\,\binom{n-i}{i}\,(-x)^i},$$

which is the solution of the recurrence relation $$E_0(1,x)=1,\,E_1(1,x)=1,\,E_n(1,x)=E_{n-1}(1,x)-xE_{n-2}(1,x)\,\,\text{for}\,n\geq 2.$$

Let $x\in \mathbb{F}_p$. There exists $y\in \mathbb{F}_{p^2}$ such that $x=y(1-y)$. The functional expression of $E_n(1,x)$ is given by 

$$\displaystyle{E_n(1,x)=E_n(1,y(1-y))=\frac{y^{n+1}-(1-y)^{n+1}}{2y-1}},$$

where $y\neq \frac{1}{2}$. When $y=\frac{1}{2}$, that is $x=\frac{1}{4}$, we have $E_n(1,\frac{1}{4})=\frac{n+1}{2^n}$; see \cite[Eq. 2.6]{NF}. 

\begin{rmk}\label{RJ2} 
The explicit expression of the regular derivative of the reversed Dickson polynomials of the second kind is given by 
$$\displaystyle{E_n^{\prime}(1,x)=\sum_{i=1}^{\lfloor n/2\rfloor}\,i\,\binom{n-i}{i}\,(-1)^{i}\,x^{i-1}}.$$
\end{rmk} 


\subsection{When is $E_{n_1}^{\prime}(1,x)=E_{n_2}^{\prime}(1,x)$ for all $x\in \mathbb{F}_p$?}

In order to study the permutation behaviour of $E_n(1,x)$ over $\mathbb{Z}_{p^t}$, we need to consider the derivative of the polynomial $E_{n}^{\prime}(1,x)$. In this subsection, we study the sequence $\{E_n^{\prime}(1,x)\}_{n=0}^{\infty}$. Our results indicate that it is worth noting that the periodicity of the sequence $\displaystyle{E_n^{\prime}\Big(1,\frac{1}{4}\Big)}$ depends on whether $p$ is a Mersenne prime or non-Mersenne prime. 

\begin{lem}(\cite[Section 2]{NF})\label{LJ2} 
Let $p$ be an odd prime. If $n_1\equiv n_2\pmod{p^2-1}$, then $E_{n_1}(1,x)=E_{n_2}(1,x)$ for all $x\in \mathbb{F}_p\setminus \{\frac{1}{4}\}$.  
\end{lem}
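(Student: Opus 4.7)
The plan is to exploit the functional expression recorded just above the lemma, namely
\[
E_n(1,x)=\frac{y^{n+1}-(1-y)^{n+1}}{2y-1},
\]
valid whenever $x=y(1-y)$ with $y\in\mathbb{F}_{p^2}$ and $y\neq\tfrac{1}{2}$. Such a $y$ always exists for $x\in\mathbb{F}_p$: the roots of $Y^2-Y+x=0$ are $\frac{1\pm\sqrt{1-4x}}{2}$, and $\sqrt{1-4x}$ lies in $\mathbb{F}_{p^2}$ because every element of $\mathbb{F}_p$ admits a square root there. The side condition $y\neq\tfrac{1}{2}$ is exactly $x\neq\tfrac{1}{4}$, which matches the hypothesis.

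The key step is the observation that $\mathbb{F}_{p^2}^{\times}$ has order $p^2-1$, so every nonzero $z\in\mathbb{F}_{p^2}$ satisfies $z^{p^2-1}=1$. Writing $n_1=n_2+k(p^2-1)$ and assuming first that $x\neq 0$ (so both $y$ and $1-y$ are nonzero in $\mathbb{F}_{p^2}$), I would compute
\[
y^{n_1+1}=y^{n_2+1}\,(y^{p^2-1})^{k}=y^{n_2+1},\qquad (1-y)^{n_1+1}=(1-y)^{n_2+1}.
\]
Dividing by the nonzero factor $2y-1$ then yields $E_{n_1}(1,x)=E_{n_2}(1,x)$ at once.

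The sole remaining value is $x=0$, where the functional expression may be applied with either $y=0$ or $y=1$; in both subcases one obtains $E_n(1,0)=1$ for every $n\geq 0$. Equivalently, the recurrence collapses to $E_n(1,0)=E_{n-1}(1,0)$ with $E_0(1,0)=1$, making the sequence constant, so the equality is automatic.

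No serious obstacle arises: the argument is essentially a single application of Fermat's little theorem inside $\mathbb{F}_{p^2}$. The only reason the hypothesis must exclude $x=\tfrac{1}{4}$ is to avoid the vanishing of $2y-1$ in the functional expression, not any deeper arithmetic obstruction. Indeed, at $x=\tfrac14$ one has $E_n(1,\tfrac14)=(n+1)/2^n$, whose residue mod $p$ depends on $n$ mod $p$ rather than on $n$ mod $p^2-1$, so the conclusion genuinely fails at that exceptional point; this confirms that the exclusion in the statement is sharp.
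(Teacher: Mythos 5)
Your proof is correct and follows essentially the same route the paper relies on (the lemma is quoted from \cite{NF}, and the argument there, like the paper's own proofs of Lemma~\ref{lll1} and Lemma~\ref{j181}, uses exactly this functional expression $E_n(1,y(1-y))=\frac{y^{n+1}-(1-y)^{n+1}}{2y-1}$ together with $z^{p^2-1}=1$ for $z\in\mathbb{F}_{p^2}^{\times}$, with the cases $x=0$ and $x=\frac14$ treated as you do). The only quibble is the closing aside: $E_n\left(1,\frac14\right)=\frac{n+1}{2^n}$ depends on $n$ modulo $p(p-1)$, not merely modulo $p$, though your conclusion that the exclusion of $x=\frac14$ is genuinely needed is still right.
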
 

\begin{cor}\label{ll1}
Let $p$ be an odd prime. If $n_1\equiv n_2\pmod{p(p^2-1)}$, then $E_{n_1}(1,x)=E_{n_2}(1,x)$ for all $x\in \mathbb{F}_p$.  
\end{cor}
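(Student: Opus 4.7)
The plan is to leverage Lemma~\ref{LJ2} to handle all $x\in \mathbb{F}_p\setminus\{\tfrac14\}$ for free, and then to treat the single remaining point $x=\tfrac14$ separately using the closed form $E_n(1,\tfrac14)=\tfrac{n+1}{2^n}$ recorded just before the statement. Since $p(p^2-1)$ is a multiple of $p^2-1$, the hypothesis $n_1\equiv n_2\pmod{p(p^2-1)}$ immediately gives $n_1\equiv n_2\pmod{p^2-1}$, so Lemma~\ref{LJ2} yields $E_{n_1}(1,x)=E_{n_2}(1,x)$ for every $x\in \mathbb{F}_p\setminus\{\tfrac14\}$. Hence the entire content of the corollary reduces to the verification at $x=\tfrac14$.

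At $x=\tfrac14$ I would argue as follows. Write $p(p^2-1)=p(p-1)(p+1)$, so $n_1\equiv n_2\pmod{p(p^2-1)}$ implies both $n_1\equiv n_2\pmod p$ and $n_1\equiv n_2\pmod{p-1}$. The first congruence gives $n_1+1\equiv n_2+1\pmod p$, so the numerators of $\tfrac{n+1}{2^n}$ agree in $\mathbb{F}_p$. The second congruence, combined with Fermat's little theorem ($2^{p-1}\equiv 1\pmod p$, which is valid because $p$ is odd so $\gcd(2,p)=1$), gives $2^{n_1}\equiv 2^{n_2}\pmod p$, so the denominators agree in $\mathbb{F}_p^{\times}$. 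Therefore
\begin{equation*}
E_{n_1}\!\left(1,\tfrac14\right)=\frac{n_1+1}{2^{n_1}}=\frac{n_2+1}{2^{n_2}}=E_{n_2}\!\left(1,\tfrac14\right)
\end{equation*}
in $\mathbb{F}_p$, which completes the remaining case.

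There is no real obstacle here; the only subtlety is making sure the factor of $p$ in the modulus $p(p^2-1)$ (absent in Lemma~\ref{LJ2}) is used precisely to control the numerator $n+1$ modulo $p$ at the exceptional point $x=\tfrac14$, while the factor $p-1$ inside $p^2-1$ continues to control the denominator $2^n$ via Fermat. Putting the two pieces together yields equality on all of $\mathbb{F}_p$, establishing the corollary.
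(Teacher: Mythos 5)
Your proposal is correct and follows essentially the same route as the paper: invoke Lemma~\ref{LJ2} for all $x\in\mathbb{F}_p\setminus\{\tfrac14\}$, then settle $x=\tfrac14$ via the closed form $E_n(1,\tfrac14)=\tfrac{n+1}{2^n}$ together with Fermat's little theorem. The only cosmetic difference is that you phrase the check at $\tfrac14$ in terms of the congruences $n_1\equiv n_2\pmod{p}$ and $n_1\equiv n_2\pmod{p-1}$, whereas the paper writes the same computation by comparing $E_n$ with $E_{n+p(p^2-1)}$ directly.
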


\begin{proof}
The proof follows from Lemma~\ref{LJ2} and the fact $$\displaystyle{E_{n+p(p^2-1)}\Big(1,\frac{1}{4}\Big)=\frac{n+p(p^2-1)+1}{2^{n+p(p^2-1)}}=\frac{n+1}{2^n}=E_n\Big(1,\frac{1}{4}\Big)}.$$
\end{proof}

\begin{rmk}
The result in Corollary~\ref{ll1} can be generalized to reversed Dickson polynomials of the $(k+1)$-th kind for any $\mathbb{F}_q$, where $q$ is a power of an odd prime $p$. Let $\mathbb{F}_q$ be the finite field with $q$ elements. If $n_1\equiv n_2\pmod{p(q^2-1)}$, then $D_{n_1,k}(1,x)=D_{n_2,k}(1,x)$ for all $x\in \mathbb{F}_q$. When $k=0$, the modulus can be reduced to $q^2-1$ as 

$$\displaystyle{D_{n,0}\Big(1,\frac{1}{4}\Big)=D_n\Big(1,\frac{1}{4}\Big)=\frac{1}{2^{n-1}}}.$$

\end{rmk}


\subsubsection{Periodicity of $E_n^{\prime}(1,x)$ over $\mathbb{F}_p$} 

\begin{lem}\label{j181}
Let $p$ be an odd prime and $n_1,n_2>1$. If $n_1\equiv n_2\pmod{p(p^2-1)}$, then $E_{n_1}^{\prime}(1,x)=E_{n_2}^{\prime}(1,x)$ for all $x\in \mathbb{F}_p\setminus \{\frac{1}{4}\}$.
\end{lem}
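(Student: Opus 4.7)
The plan is to extend the functional-expression strategy used in the proof of Lemma~\ref{lll1} to the second-kind setting. For each $x \in \mathbb{F}_p \setminus \{1/4\}$, pick $y \in \mathbb{F}_{p^2}$ with $y \neq 1/2$ such that $x = y(1-y)$, which gives $dx/dy = 1-2y$ and the two functional identities
$$(2y-1)\,E_n(1,x) = y^{n+1} - (1-y)^{n+1}, \qquad D_n(1,x) = y^n + (1-y)^n.$$

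First I would differentiate the cleared identity on the left with respect to $y$. The right-hand side becomes $(n+1)\bigl[y^n + (1-y)^n\bigr] = (n+1)\,D_n(1,x)$, while the left-hand side, via the chain rule, equals $E_n'(1,x)(1-2y)(2y-1) + 2 E_n(1,x)$. Solving for $E_n'(1,x)$ and using the identity $(2y-1)^2 = 1 - 4y(1-y) = 1-4x$ yields the closed-form expression
$$E_n'(1,x) \;=\; \frac{2\,E_n(1,x) - (n+1)\,D_n(1,x)}{1 - 4x},$$
valid on $\mathbb{F}_p \setminus \{1/4\}$.

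Next, the hypothesis $n_1 \equiv n_2 \pmod{p(p^2-1)}$ splits (via $\gcd(p,p^2-1)=1$) into $n_1 \equiv n_2 \pmod{p^2-1}$ and $n_1 \equiv n_2 \pmod p$. Lemmas~\ref{LJ1}(ii) and~\ref{LJ2} convert the first congruence into the pointwise equalities $D_{n_1}(1,x) = D_{n_2}(1,x)$ and $E_{n_1}(1,x) = E_{n_2}(1,x)$ on $\mathbb{F}_p \setminus \{1/4\}$, while the second gives $n_1 + 1 \equiv n_2 + 1 \pmod p$. Substituting both indices into the displayed formula and comparing (the denominator $1-4x$ is nonzero by assumption on $x$) yields $E_{n_1}'(1,x) = E_{n_2}'(1,x)$ for every $x \in \mathbb{F}_p \setminus \{1/4\}$.

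The only delicate step is the derivation itself: differentiating the \emph{cleared} identity $(2y-1)E_n(1,x) = y^{n+1} - (1-y)^{n+1}$ rather than $E_n(1,x)$ directly is what keeps the quotient rule out of the picture and makes both $E_n(1,x)$ and $D_n(1,x)$ drop out on the right-hand side, which is precisely what the two modular hypotheses control. After that identity is in hand, the remainder is bookkeeping; no case analysis at $x=0$ is required because the corresponding $y \in \{0,1\}$ still satisfies $y \neq 1/2$, and the computation is uniform in $y$.
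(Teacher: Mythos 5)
Your proposal is correct and follows essentially the same route as the paper: both arguments reduce to the closed-form identity $E_n'(1,x)=\dfrac{2E_n(1,x)-(n+1)D_n(1,x)}{1-4x}$ on $\mathbb{F}_p\setminus\{\frac{1}{4}\}$ and then invoke the periodicity of $E_n$ and $D_n$ modulo $p^2-1$ together with $n_1+1\equiv n_2+1\pmod{p}$. The only (cosmetic) difference is that you obtain the formula by differentiating the cleared identity $(2y-1)E_n(1,x)=y^{n+1}-(1-y)^{n+1}$ with respect to $y$, whereas the paper applies the quotient rule together with $\frac{dy}{dx}=\frac{-1}{2y-1}$; your variant is if anything slightly cleaner, since it stays at the level of a formal polynomial identity in $y$.
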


\begin{proof}
For each $x\in \mathbb{F}_p$, there exists $y\in \mathbb{F}_{p^2}$ such that $x=y(1-y)$. The functional expression of $E_n(1,x)$ is given by 

$$\displaystyle{E_n(1,x)=E_n(1,y(1-y))=\frac{y^{n+1}-(1-y)^{n+1}}{2y-1}},$$

where $y\neq \frac{1}{2}$, i.e. $x\neq \frac{1}{4}$; see \cite[Section 2]{NF}. Since $(2y-1)^2=1-4x$, its derivative $E_n^{\prime}(1,x)$ is given by 

\[
\begin{split}
E_n^{\prime}(1,x)&=\frac{(2y-1)\,(n+1)\,(y^n+(1-y)^n)-\,2\,(y^{n+1}-(1-y)^{n+1}}{(2y-1)^2}\cdot \displaystyle{\frac{-1}{2y-1}}\cr
&=\frac{-1}{(2y-1)^3}\,\Big((2y-1)\,(n+1)\,(y^n+(1-y)^n)\Big)\cr
&+ \frac{2}{(2y-1)^3}\,(y^{n+1}-(1-y)^{n+1})\cr
&=\frac{-1}{(1-4x)}\,\cdot (n+1)\cdot D_n(1,x)+ \frac{2}{(1-4x)}\,\cdot \,E_n(1,x).
\end{split} 
\]

Thus the proof follows from the fact that 

\[
\begin{split}
E_{n+p(p^2-1)}^{\prime}(1,x)&=\frac{-1}{(1-4x)}\,\cdot (n+p(p^2-1)+1)\cdot D_{n+p(p^2-1)+1}(1,x)\cr
&+ \frac{2}{(1-4x)}\,\cdot \,E_{n+p(p^2-1)+1}(1,x)\cr
&= \frac{-1}{(1-4x)}\,\cdot (n+1)\cdot D_n(1,x)+ \frac{2}{(1-4x)}\,\cdot \,E_n(1,x).
\end{split}
\]

\end{proof} 

\begin{cor}\label{CCJJ}
In $\mathbb{F}_p$, if $n+1\equiv 0\pmod{p}$, then $E_{n}^{\prime}(1,x)=\frac{2}{(1-4x)}\,\,E_{n}(1,x)$, where $x\in \mathbb{F}_p\setminus \{\frac{1}{4}\}$. 
\end{cor}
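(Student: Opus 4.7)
The plan is to invoke directly the intermediate identity derived inside the proof of Lemma~\ref{j181}. In the course of proving periodicity, the authors established, for $x \in \mathbb{F}_p \setminus \{\tfrac{1}{4}\}$, the identity
\begin{equation*}
E_n^{\prime}(1,x) \;=\; \frac{-(n+1)}{1-4x}\,D_n(1,x) \;+\; \frac{2}{1-4x}\,E_n(1,x).
\end{equation*}
This identity is an algebraic consequence of the functional expression $E_n(1,y(1-y)) = \frac{y^{n+1}-(1-y)^{n+1}}{2y-1}$ together with the chain rule $\frac{dy}{dx} = \frac{-1}{2y-1}$, and holds in $\mathbb{F}_p$ for every $n \geq 1$.

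Given this identity, the corollary is immediate: if $n+1 \equiv 0 \pmod p$, then the coefficient $-(n+1)$ of $D_n(1,x)$ in the expression above vanishes in $\mathbb{F}_p$, leaving only the $E_n(1,x)$ term. Therefore I would simply state the identity (quoting it from the proof of Lemma~\ref{j181}), reduce modulo $p$ under the hypothesis $p \mid n+1$, and read off $E_n^{\prime}(1,x) = \frac{2}{1-4x}\, E_n(1,x)$.

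There is no real obstacle here — the proof is a one-line specialization of the formula already obtained. The only point worth flagging explicitly is the restriction $x \neq \tfrac{1}{4}$, which is necessary for the division by $1-4x$ to make sense and which matches the restriction already imposed in Lemma~\ref{j181}. Since the hypothesis $n+1 \equiv 0 \pmod p$ is used only to kill the first term, no further case analysis (for instance, on whether $p$ is a Mersenne prime or whether $p \mid 2y-1$) is needed at this stage.
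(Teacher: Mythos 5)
Your proposal is correct and is essentially the paper's own proof: the Corollary is stated there with the justification ``the proof follows from Lemma~\ref{j181},'' meaning precisely the identity $E_n^{\prime}(1,x)=\frac{-(n+1)}{1-4x}\,D_n(1,x)+\frac{2}{1-4x}\,E_n(1,x)$ derived in that lemma's proof, specialized to $p \mid n+1$. Your handling of the restriction $x\neq \frac{1}{4}$ matches the paper as well, so there is nothing to add.
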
 

\begin{proof}
The proof follows from Lemma~\ref{j181}. 
\end{proof} 

\begin{cor}\label{JJJC2}
In $\mathbb{F}_p$, we have 

\[
\begin{split}
E_{np}^{\prime}(1,x)&=\frac{-1}{(1-4x)}\,\cdot D_{np}(1,x)+ \frac{2}{(1-4x)}\,\cdot \,E_{np}(1,x),
\end{split} 
\]\end{cor}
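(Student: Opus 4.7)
The plan is to piggy-back directly on the intermediate identity produced during the proof of Lemma~\ref{j181}. That proof first parameterizes $x = y(1-y)$ with $y \in \mathbb{F}_{p^2}$, uses the functional expressions
$$D_n(1,x) = y^n + (1-y)^n, \qquad E_n(1,x) = \frac{y^{n+1} - (1-y)^{n+1}}{2y-1},$$
computes $\frac{dy}{dx} = \frac{-1}{2y-1}$, applies the quotient rule and $(2y-1)^2 = 1-4x$, and arrives at the closed-form identity
$$E_n^{\prime}(1,x) = \frac{-(n+1)}{1-4x}\,D_n(1,x) + \frac{2}{1-4x}\,E_n(1,x),$$
valid for every $x \in \mathbb{F}_p \setminus \{\tfrac{1}{4}\}$ and every $n \geq 1$. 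Although Lemma~\ref{j181} packages this as a periodicity statement modulo $p(p^2-1)$, the formula itself is the substance of its derivation.

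With that identity available, the corollary is an immediate specialization. I would simply substitute $n \mapsto np$ and observe that $np + 1 \equiv 1 \pmod{p}$, so the coefficient $(np+1)$ multiplying $D_{np}(1,x)$ collapses to $1$ in $\mathbb{F}_p$. This yields
$$E_{np}^{\prime}(1,x) = \frac{-1}{1-4x}\,D_{np}(1,x) + \frac{2}{1-4x}\,E_{np}(1,x),$$
which is exactly the claimed identity. No fresh computation is needed beyond reading off the $n=np$ case of the formula from the preceding lemma's proof.

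There is essentially no main obstacle here: the only subtlety worth flagging is the tacit domain restriction $x \neq \tfrac{1}{4}$, inherited from the presence of $1-4x$ in the denominator on the right-hand side and from the parameterization breaking down at $y = \tfrac{1}{2}$. This should be stated explicitly, mirroring the hypothesis $x \in \mathbb{F}_p \setminus \{\tfrac{1}{4}\}$ used in Corollary~\ref{CCJJ}. Beyond that caveat, the proof is one line: quote the formula from the proof of Lemma~\ref{j181}, substitute $n \to np$, and reduce the coefficient modulo $p$.
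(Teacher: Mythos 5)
Your proposal is correct and matches the paper's argument: the paper also deduces the corollary directly from the identity $E_n^{\prime}(1,x)=\frac{-(n+1)}{1-4x}D_n(1,x)+\frac{2}{1-4x}E_n(1,x)$ established in the proof of Lemma~\ref{j181}, with the coefficient $np+1$ reducing to $1$ modulo $p$. Your remark that the restriction $x\neq\frac{1}{4}$ should be stated explicitly (as in Corollary~\ref{CCJJ}) is a fair and correct observation about an implicit hypothesis.
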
 

\begin{proof}
The proof follows from Lemma~\ref{j181}. 
\end{proof} 

\begin{cor}\label{JJJC3}
In $\mathbb{F}_p$, if $n+1\equiv 0\pmod{p^2-1}$, then $$E_{n}^{\prime}(1,x)=\frac{-1}{(1-4x)}\,\cdot (n+1)\cdot \,D_{n}(1,x),$$ where $x\in \mathbb{F}_p\setminus \{0, \frac{1}{4}\}$. 
\end{cor}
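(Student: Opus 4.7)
The plan is to deduce this corollary from Lemma~\ref{j181} by showing that, under the stronger hypothesis $n+1\equiv 0\pmod{p^2-1}$, the second summand in the formula
\[
E_{n}^{\prime}(1,x)=\frac{-1}{1-4x}\,(n+1)\,D_{n}(1,x)+\frac{2}{1-4x}\,E_{n}(1,x)
\]
simply vanishes on $\mathbb{F}_p\setminus\{0,\tfrac14\}$, leaving only the first summand.

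To carry this out, first invoke the identity established in the proof of Lemma~\ref{j181} (which is itself an immediate rewriting of the functional expression of $E_n$). Then write $x=y(1-y)$ with $y\in\mathbb{F}_{p^2}$, and use the functional expression
\[
E_{n}(1,x)=\frac{y^{n+1}-(1-y)^{n+1}}{2y-1}.
\]
The exclusion $x\neq 0$ forces $y\neq 0$ and $y\neq 1$, while the exclusion $x\neq \tfrac14$ forces $y\neq\tfrac12$; hence $y$ and $1-y$ both lie in $\mathbb{F}_{p^2}^{\times}$, and so $y^{p^2-1}=(1-y)^{p^2-1}=1$. The hypothesis $n+1\equiv 0\pmod{p^2-1}$ then gives $y^{n+1}=(1-y)^{n+1}=1$, whence $E_n(1,x)=0$.

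Substituting $E_n(1,x)=0$ back into the formula from Lemma~\ref{j181} yields the desired identity. The only subtlety, and the step one must be careful about, is the need to exclude both $x=0$ and $x=\tfrac14$: the $y,1-y\in\mathbb{F}_{p^2}^{\times}$ condition fails at $x=0$, and the functional expression itself is indeterminate at $x=\tfrac14$ (where $y=\tfrac12$ makes the denominator $2y-1$ vanish). This is exactly the domain already restricted in the statement of the corollary, so no extra work is needed — the result follows in a few lines once the preceding lemma is in hand.
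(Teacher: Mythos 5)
Your proposal is correct and matches the paper's intent: the paper's proof is just the one-line citation of Lemma~\ref{j181}, and the implicit step you spell out — that $x\neq 0,\tfrac14$ gives $y,1-y\in\mathbb{F}_{p^2}^{\times}$, so $y^{n+1}=(1-y)^{n+1}=1$ and hence $E_n(1,x)=0$, killing the second summand — is exactly the intended argument. No gaps; your handling of the excluded points $x=0$ and $x=\tfrac14$ is the right justification for the stated domain.
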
 

\begin{proof}
The proof follows from Lemma~\ref{j181}.  
\end{proof} 

We now explore the periodicity of $\displaystyle{E_n^{\prime}\Big(1,\frac{1}{4}\Big)}$. 


\subsubsection{$p=3$.}

\begin{lem}\label{j182}
Let $p=3$. Then 

\[
\displaystyle{E_n^{\prime}\Big(1,\frac{1}{4}\Big)} =E_n^{\prime}(1,1)= \left\{
\begin{alignedat}{2}
  & 0 & \quad & \text{if}\ n\equiv 0,1,8,9,10,17 \pmod{18},\\
  & 1     & \quad & \text{if}\ n\equiv 3, 6, 11, 13, 14, 16\pmod{18},\\
  & 2     & \quad & \text{if}\ n\equiv 2,4,5,7, 12, 15 \pmod{18}.
\end{alignedat}
\right.
\]

\end{lem}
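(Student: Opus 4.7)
The plan is to reduce the computation to iterating a concrete two-term linear recurrence over $\mathbb{F}_3$ and then tabulating the first eighteen values. First, since $4 \equiv 1 \pmod{3}$, one has $1/4 = 1$ in $\mathbb{F}_3$, so $E_n^{\prime}(1, 1/4) = E_n^{\prime}(1, 1)$ and it suffices to establish the formula at $x = 1$. Note that Lemma~\ref{j181} gives the period $p(p^2-1) = 24$ on $\mathbb{F}_p \setminus \{1/4\}$, so the content of this lemma is specifically the smaller period $18$ at the excluded point.

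Differentiating the defining recurrence $E_n(1,x) = E_{n-1}(1,x) - x\,E_{n-2}(1,x)$ in $x$ and setting $x=1$ gives the inhomogeneous recurrence
\[
E_n^{\prime}(1,1) = E_{n-1}^{\prime}(1,1) - E_{n-2}(1,1) - E_{n-2}^{\prime}(1,1), \quad n \geq 2,
\]
with initial values $E_0^{\prime}(1,1) = E_1^{\prime}(1,1) = 0$. The forcing sequence $\{E_n(1,1)\}$ satisfies $E_n(1,1) = E_{n-1}(1,1) - E_{n-2}(1,1)$ with $E_0(1,1) = E_1(1,1) = 1$, and a direct check shows that in $\mathbb{F}_3$ this driver is $6$-periodic with repeating block $1,1,0,2,2,0$.

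Iterating the inhomogeneous recurrence from $(0,0)$ for $n = 0, 1, \ldots, 19$ produces, in $\mathbb{F}_3$, the sequence
\[
0,0,2,1,2,2,1,2,0,0,0,1,2,1,1,2,1,0,0,0,\ldots .
\]
Since $\bigl(E_{18}^{\prime}(1,1), E_{19}^{\prime}(1,1)\bigr) = (0,0) = \bigl(E_0^{\prime}(1,1), E_1^{\prime}(1,1)\bigr)$, and the $6$-periodic driver has simultaneously completed exactly three full periods at index $18$, the state of the recurrence returns to its initial configuration at $n = 18$; hence the sequence is $18$-periodic from the start. Reading off the residues modulo $18$ in the tabulated list recovers the three cases stated in the lemma.

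The only obstacle here is bookkeeping: the argument has no conceptual subtlety, but one must correctly perform eighteen reductions modulo $3$ and verify the match of $(E_{18}^{\prime}, E_{19}^{\prime})$ with the initial data. I would therefore package this tabulated computation as a sequence in Appendix~\ref{AppB}, mirroring the convention used for the analogous sequence in the proof of Lemma~\ref{LLJ4}.
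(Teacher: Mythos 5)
Your proposal is correct and follows essentially the same route as the paper, which simply tabulates $E_n^{\prime}(1,1)$ modulo $3$ for $0\leq n\leq 17$ (sequence~\ref{B5} in Appendix~\ref{AppB}) and reads off the three residue classes; your computed values $0,0,2,1,2,2,1,2,0,0,0,1,2,1,1,2,1,0$ agree with that table. Your added observation that $\bigl(E_{18}^{\prime},E_{19}^{\prime}\bigr)=(0,0)$ together with the $6$-periodic driver $\{E_n(1,1)\}$ returning to phase at $n=18$ supplies the $18$-periodicity explicitly, a point the paper leaves implicit.
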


\begin{proof}
The proof follows from the sequence \ref{B5} in Appendix~\ref{AppB}. 
\end{proof} 

\begin{thm}\label{TJ2}
Let $p=3$ and $n_1,n_2 >1$. If $n_1\equiv n_2\pmod{72}$, then for all $x\in \mathbb{Z}_3$, we have $E_{n_1}^{\prime}(1,x)=E_{n_2}^{\prime}(1,x).$
\end{thm}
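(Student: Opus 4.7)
The strategy is to split the domain $\mathbb{Z}_3=\{0,1,2\}$ into the point $x=\frac{1}{4}$ and its complement, obtain a period of the sequence $\{E_n^{\prime}(1,x)\}_{n\geq 2}$ on each piece, and then take the least common multiple. The key arithmetic observation to keep in mind throughout is that in $\mathbb{F}_3$ we have $4\equiv 1\pmod{3}$, so $\frac{1}{4}=1$; hence the point $\frac{1}{4}$ that had to be excluded in Lemma~\ref{j181} is precisely $x=1\in\mathbb{Z}_3$, and its complement in $\mathbb{Z}_3$ is the two-element set $\{0,2\}$.

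First, for $x\in \mathbb{Z}_3\setminus\{1\}=\{0,2\}$, I would directly invoke Lemma~\ref{j181}: whenever $n_1\equiv n_2\pmod{p(p^2-1)}$, i.e.\ modulo $3\cdot 8=24$, one has $E_{n_1}^{\prime}(1,x)=E_{n_2}^{\prime}(1,x)$. Since $24\mid 72$, congruence modulo $72$ certainly implies congruence modulo $24$, so $E_{n_1}^{\prime}(1,x)=E_{n_2}^{\prime}(1,x)$ for these two values of $x$.

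Next, for the remaining point $x=1=\frac{1}{4}$, I would read off the period directly from Lemma~\ref{j182}, which tabulates $E_n^{\prime}\bigl(1,\tfrac14\bigr)=E_n^{\prime}(1,1)$ modulo $18$; thus the sequence $\{E_n^{\prime}(1,1)\}$ is periodic with period dividing $18$. Since $18\mid 72$, the assumption $n_1\equiv n_2\pmod{72}$ again suffices to conclude $E_{n_1}^{\prime}(1,1)=E_{n_2}^{\prime}(1,1)$.

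Combining the two cases gives the result for every $x\in\mathbb{Z}_3$. The choice of modulus $72$ is then seen to be exactly $\operatorname{lcm}(24,18)=2^3\cdot 3^2$, which is the smallest modulus that works uniformly in $x$. There is no real obstacle here beyond the bookkeeping in the two cases; the substantive content was already absorbed into Lemmas~\ref{j181} and~\ref{j182}, and all that remains in the proof is to assemble them via the lcm.
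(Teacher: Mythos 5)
Your proposal is correct and follows essentially the same route as the paper: apply Lemma~\ref{j181} (modulus $p(p^2-1)=24$) at the points $x\in\mathbb{Z}_3\setminus\{\frac{1}{4}\}=\{0,2\}$, apply Lemma~\ref{j182} (period $18$) at $x=\frac{1}{4}=1$, and combine via $\operatorname{lcm}(24,18)=72$. The only extraneous bit is the closing claim that $72$ is the \emph{smallest} uniform modulus, which is neither needed nor established, but it does not affect the validity of the argument.
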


\begin{proof}
The proof follows from Lemma~\ref{j181}, Lemma~\ref{j182}, and the fact that $\textnormal{lcm}(24,18)=72$. 
\end{proof} 


\subsubsection{$p=5$.}

\begin{lem}\label{j183}
Let $p=5$. Then 

\[
\displaystyle{E_n^{\prime}\Big(1,\frac{1}{4}\Big)} =E_n^{\prime}(1,-1)= \left\{
\begin{alignedat}{2}
  & 0 & \quad & \text{if}\ n\equiv 0,1,4,5,6,9,10,11,14,15,16,19 \pmod{20},\\
  & 1     & \quad & \text{if}\ n\equiv 12,18\pmod{20},\\
  & 2     & \quad & \text{if}\ n\equiv 7,13 \pmod{20},\\
  & 3     & \quad & \text{if}\ n\equiv 3,17\pmod{20},\\
  & 4     & \quad & \text{if}\ n\equiv 2,8 \pmod{20}.\\
\end{alignedat}
\right.
\]
\end{lem}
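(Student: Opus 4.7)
The plan is to reduce the statement to a finite computation in $\mathbb{F}_5$. First I would note that $4 \cdot 4 \equiv 1 \pmod{5}$, so $\frac{1}{4} = -1$ in $\mathbb{F}_5$; this immediately yields the stated equality $E_n'(1, \tfrac{1}{4}) = E_n'(1, -1)$. The substantive content of the lemma is therefore that the sequence $\{E_n'(1, -1)\}_{n \geq 0}$ is purely periodic of period (dividing) $20$ in $\mathbb{F}_5$, together with the explicit listing of the $20$ values.

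To justify the periodicity, I would differentiate the defining recurrence $E_n(1,x) = E_{n-1}(1,x) - x\,E_{n-2}(1,x)$ with respect to $x$, obtaining
$$E_n'(1,x) = E_{n-1}'(1,x) - E_{n-2}(1,x) - x\,E_{n-2}'(1,x),$$
which at $x = -1$ specializes to
$$E_n'(1,-1) = E_{n-1}'(1,-1) - E_{n-2}(1,-1) + E_{n-2}'(1,-1).$$
This is a linear recurrence for $E_n'(1,-1)$ driven by the auxiliary sequence $\{E_n(1,-1)\}$. The closed form $E_n(1, \tfrac{1}{4}) = \tfrac{n+1}{2^n}$ recalled in Section~\ref{S4} shows that $E_n(1,-1)$ is periodic in $\mathbb{F}_5$ with period $\mathrm{lcm}(5, \mathrm{ord}_5(2)) = \mathrm{lcm}(5, 4) = 20$. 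Hence once I verify the base cases $E_{20}'(1,-1) = E_0'(1,-1)$ and $E_{21}'(1,-1) = E_1'(1,-1)$, a straightforward induction via the recurrence propagates $20$-periodicity to the whole sequence $\{E_n'(1,-1)\}$.

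The remaining step is a direct computation of $E_n'(1,-1)$ for $n = 0, 1, \ldots, 19$ using the above recurrence (initialized by $E_0'(1,x) = E_1'(1,x) = 0$), which I would tabulate and cross-reference with the corresponding sequence in Appendix~\ref{AppB}, exactly as done for the $p=3$ analogue in Lemma~\ref{j182}. The values then partition the residues modulo $20$ into the five classes listed in the statement.

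I do not anticipate any serious obstacle; the only mildly delicate point is the periodicity argument itself, because the formula derived in the proof of Lemma~\ref{j181} breaks down at $x = \tfrac{1}{4}$ due to the factor $\tfrac{1}{1-4x}$. One cannot simply invoke the period $p(p^2-1) = 120$ guaranteed there; instead one must leverage the separate closed form $(n+1)/2^n$ at this exceptional point, which actually yields the smaller period $20$ that appears in the statement.
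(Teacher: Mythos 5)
Your proposal is correct, and it is in fact more complete than the argument the paper gives: the paper's proof of this lemma consists of a single sentence referring to the tabulated sequence \eqref{B6} in Appendix~\ref{AppB}, which only lists $E_n'\bigl(1,\tfrac14\bigr)$ for $0\le n\le 19$ and leaves the $20$-periodicity (the real content of the ``$\pmod{20}$'' claim) unjustified, essentially treating it as a computational observation in the spirit of Conjecture~\ref{C1}. You instead supply the missing periodicity argument: differentiating the recurrence $E_n(1,x)=E_{n-1}(1,x)-xE_{n-2}(1,x)$ gives $E_n'(1,-1)=E_{n-1}'(1,-1)+E_{n-2}'(1,-1)-E_{n-2}(1,-1)$ over $\mathbb{F}_5$ (where $\tfrac14=-1$), the driving sequence $E_n(1,-1)=E_n\bigl(1,\tfrac14\bigr)=\tfrac{n+1}{2^n}$ is periodic with period dividing $\operatorname{lcm}(5,\operatorname{ord}_5(2))=20$, and then a check of the two base cases $E_{20}'(1,-1)=E_0'(1,-1)$ and $E_{21}'(1,-1)=E_1'(1,-1)$ propagates $20$-periodicity by induction; the table of the first $20$ values (which matches \eqref{B6}: $0,0,4,3,0,0,0,2,4,0,0,0,1,2,0,0,0,3,1,0$) then yields exactly the residue classes in the statement. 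Your closing remark is also on point: the period $p(p^2-1)$ from Lemma~\ref{j181} does not apply at the exceptional point $x=\tfrac14$ because of the $\tfrac{1}{1-4x}$ factor, so the separate closed form $(n+1)/2^n$ is genuinely needed there, and it is what produces the smaller modulus $20$. In short, your route proves the lemma rather than merely exhibiting data consistent with it, at the cost of a short recurrence manipulation and two extra evaluations; the paper's route buys brevity but, taken literally, verifies only $0\le n\le 19$.
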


\begin{proof}
The proof follows from the sequence \ref{B6} in Appendix~\ref{AppB}.  
\end{proof} 

\begin {thm}
Let $p=5$ and $n_1,n_2 >1$. If $n_1\equiv n_2\pmod{120}$, then for all $x\in \mathbb{Z}_5$, we have $E_{n_1}^{\prime}(1,x)=E_{n_2}^{\prime}(1,x).$
\end{thm}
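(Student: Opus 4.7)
The plan is to mirror the structure of the proof of Theorem~\ref{TJ2} in the preceding $p=3$ case, namely to split $\mathbb{Z}_5$ into the two regions $\mathbb{F}_5\setminus\{\tfrac{1}{4}\}$ and $\{\tfrac{1}{4}\}$, handle each with one of the two already-established periodicity results, and then combine them by a simple divisibility observation.

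First I would apply Lemma~\ref{j181} with $p=5$. This lemma guarantees that whenever $n_1\equiv n_2\pmod{p(p^2-1)}$, the derivatives agree on $\mathbb{F}_p\setminus\{\tfrac{1}{4}\}$. For $p=5$ we have $p(p^2-1)=5\cdot 24=120$, so the hypothesis $n_1\equiv n_2\pmod{120}$ is exactly what Lemma~\ref{j181} requires, giving $E_{n_1}^{\prime}(1,x)=E_{n_2}^{\prime}(1,x)$ for every $x\in\mathbb{F}_5\setminus\{\tfrac{1}{4}\}$, i.e. for $x\in\{0,1,2,3\}$ (noting that $\tfrac{1}{4}=-1$ in $\mathbb{F}_5$).

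Next, for the exceptional point $x=\tfrac{1}{4}$, I would invoke Lemma~\ref{j183}, which records the explicit pattern of $E_n^{\prime}(1,\tfrac{1}{4})=E_n^{\prime}(1,-1)$ in $\mathbb{F}_5$ and shows that this sequence is periodic with period dividing $20$. Since $20\mid 120$, the hypothesis $n_1\equiv n_2\pmod{120}$ forces $n_1\equiv n_2\pmod{20}$, and hence $E_{n_1}^{\prime}(1,\tfrac{1}{4})=E_{n_2}^{\prime}(1,\tfrac{1}{4})$.

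Combining the two cases yields the result for all $x\in\mathbb{Z}_5$. There is no genuine obstacle here; the only substantive observation is the divisibility $20\mid 120$, which makes the $p=5$ modulus simply $p(p^2-1)$ rather than an lcm. This contrasts with the $p=3$ case, where the period $18$ at $x=\tfrac{1}{4}$ does \emph{not} divide $p(p^2-1)=24$, forcing the modulus $\operatorname{lcm}(24,18)=72$; for $p=5$ no such enlargement is needed, so $120$ is already the correct period.
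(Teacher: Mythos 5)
Your proof is correct and matches the paper's own argument: the paper likewise combines Lemma~\ref{j181} (with $p(p^2-1)=120$) for $x\neq\frac{1}{4}$ with the period-$20$ behaviour at $x=\frac{1}{4}$ from Lemma~\ref{j183}, noting $\operatorname{lcm}(120,20)=120$, which is exactly your divisibility observation $20\mid 120$. Your remark contrasting this with the $p=3$ case (where the lcm genuinely enlarges the modulus to $72$) is accurate but not needed for the proof itself.
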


\begin{proof}
The proof follows from Lemma~\ref{j181}, Lemma~\ref{j183}, and the fact that $\textnormal{lcm}(120,20)=120$. 
\end{proof} 


\subsubsection{$p=7$.}

\begin{lem}\label{j184}
Let $p=7$. Then $\displaystyle{E_n^{\prime}\Big(1,\frac{1}{4}\Big)}$ is a sequence with period 21, and 

\[
\displaystyle{E_n^{\prime}\Big(1,\frac{1}{4}\Big)} =E_n^{\prime}(1,2)\left\{
\begin{alignedat}{2}
  & 0 & \quad & \text{if}\ n\equiv 0,1,6,7,8,13,14,15,20\pmod{21},\\
  & 1     & \quad & \text{if}\ n\equiv 4,5,\pmod{21},\\
  & 2     & \quad & \text{if}\ n\equiv 18,19\pmod{21},\\
  & 3     & \quad & \text{if}\ n\equiv 9,17\pmod{21},\\
  & 4     & \quad & \text{if}\ n\equiv 11,12\pmod{21},\\
  & 5    & \quad & \text{if}\ n\equiv 3,16\pmod{21},\\
  & 6     & \quad & \text{if}\ n\equiv 2,10,\pmod{21}.
\end{alignedat}
\right.
\]
\end{lem}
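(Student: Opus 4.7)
The plan is to follow the same strategy as in the proofs of Lemma~\ref{j182} and Lemma~\ref{j183}: produce the sequence $\{E_n^{\prime}(1, 1/4)\}_{n \ge 0}$ in $\mathbb{F}_7$ explicitly, confirm that it is periodic of period $21$, and read off the residues listed in the statement. First note that $1/4 \equiv 2 \pmod{7}$, which explains the rewriting $E_n^{\prime}(1, 1/4) = E_n^{\prime}(1, 2)$. To generate the sequence I would iterate the recurrence $E_n(1,x) = E_{n-1}(1,x) - x\,E_{n-2}(1,x)$ together with its termwise derivative
$$E_n^{\prime}(1,x) = E_{n-1}^{\prime}(1,x) - x\,E_{n-2}^{\prime}(1,x) - E_{n-2}(1,x),$$
initialized by $E_0 = E_1 = 1$ and $E_0^{\prime} = E_1^{\prime} = 0$, specialized at $x = 2$ in $\mathbb{F}_7$. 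Two full blocks ($n = 0, 1, \ldots, 41$) suffice to confirm periodicity, and the resulting table would be deposited in Appendix~\ref{AppB} alongside the existing sequences.

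As a conceptual cross-check I would also derive a closed form by Taylor expanding the functional expression $E_n(1, y(1-y)) = (y^{n+1} - (1-y)^{n+1})/(2y-1)$ at $y = 1/2$. Substituting $y = 1/2 + t$ gives $x = 1/4 - t^2$; expanding the numerator in $t$, cancelling the $2t$ factor, and isolating the coefficient of $t^2$ yields
$$E_n^{\prime}\!\left(1, \tfrac{1}{4}\right) \;=\; -\,\frac{4\,\binom{n+1}{3}}{2^n} \quad \text{in}\ \mathbb{F}_p.$$
For $p = 7$ the factor $1/2^n$ has period $3$ (since $2^3 \equiv 1$), and $\binom{n+1}{3}$, viewed as a polynomial function of $n$, reduces with period $7$ on $\mathbb{F}_7$; hence the product is periodic with period dividing $\mathrm{lcm}(3, 7) = 21$, matching the claim.

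The only mild obstacle is verifying that $21$ is the \emph{minimal} period rather than a proper divisor such as $7$ or $3$, but this is immediate from the tabulated values: for instance, $E_4^{\prime}(1,2) \equiv 1$ while $E_{11}^{\prime}(1,2) \equiv 4$ rules out period $7$, and $E_2^{\prime}(1,2) \equiv 6$ while $E_5^{\prime}(1,2) \equiv 1$ rules out period $3$. The rest of the lemma is then a bookkeeping step: collect which residues $n \pmod{21}$ correspond to each value in $\{0, 1, 2, 3, 4, 5, 6\}$ from the tabulated block.
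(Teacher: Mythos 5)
Your proposal is correct, and its computational core coincides with the paper's proof, which simply reads the values off the tabulated sequence \eqref{B7} in Appendix~\ref{AppB} (the recurrence $E_n^{\prime}(1,x)=E_{n-1}^{\prime}(1,x)-xE_{n-2}^{\prime}(1,x)-E_{n-2}(1,x)$ with $E_0^{\prime}=E_1^{\prime}=0$ is exactly how such a table is produced, and $1/4\equiv 2\pmod 7$ is the reason for the rewriting $E_n^{\prime}(1,1/4)=E_n^{\prime}(1,2)$). Where you genuinely go beyond the paper is the closed form: expanding $E_n(1,y(1-y))$ at $y=\tfrac12+t$, so that $x=\tfrac14-t^2$ and the coefficient of $t^2$ equals $-E_n^{\prime}(1,\tfrac14)$, correctly gives $E_n^{\prime}\bigl(1,\tfrac14\bigr)=-4\binom{n+1}{3}/2^n$ in $\mathbb{F}_p$ (it checks against $E_2^{\prime}=-1$, $E_3^{\prime}=-2$, $E_4^{\prime}(1,\tfrac14)=-\tfrac52$), and since $2^{-n}$ has period $3$ modulo $7$ while $\binom{n+1}{3}\equiv (n+1)n(n-1)\cdot 6^{-1}$ depends only on $n\bmod 7$, the period divides $21$; minimality then follows from the listed values. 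This is a real improvement, because neither the paper's single tabulated block $0\le n\le 20$ nor your proposed two blocks $0\le n\le 41$ can, by themselves, establish periodicity of the infinite sequence -- the recurrence has order two and its state involves $E_n$ as well as $E_n^{\prime}$, so agreement over finitely many terms proves nothing about all $n$. Your Taylor-expansion identity supplies exactly the missing justification (and, incidentally, is the natural route toward Conjectures~\ref{C1} and \ref{C2}); I would present it as the proof proper and demote the tables to a consistency check.
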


\begin{proof}
The proof follows from the sequence \ref{B7} in Appendix~\ref{AppB}. 
\end{proof} 

\begin {thm}
Let $p=7$ and $n_1,n_2 >1$. If $n_1\equiv n_2\pmod{336}$, then for all $x\in \mathbb{Z}_7$, we have $E_{n_1}^{\prime}(1,x)=E_{n_2}^{\prime}(1,x).$
\end{thm}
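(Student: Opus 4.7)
The plan is to follow exactly the template of the two preceding theorems (for $p=3$ and $p=5$), splitting $\mathbb{Z}_7$ into the two parts $\mathbb{Z}_7\setminus\{\frac{1}{4}\}$ and $\{\frac{1}{4}\}$, and handling each with an already-established periodicity statement.

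First, for $x\in \mathbb{Z}_7\setminus\{\frac{1}{4}\}$, Lemma~\ref{j181} gives directly that $E_{n_1}^{\prime}(1,x)=E_{n_2}^{\prime}(1,x)$ whenever $n_1\equiv n_2\pmod{p(p^2-1)}$. With $p=7$ this modulus is $7\cdot 48=336$, so the hypothesis $n_1\equiv n_2\pmod{336}$ suffices on this part of $\mathbb{Z}_7$.

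Second, for the exceptional point $x=\frac{1}{4}$ (which in $\mathbb{F}_7$ equals $2$), I would invoke Lemma~\ref{j184}, which exhibits $\{E_n^{\prime}(1,\frac{1}{4})\}$ as a sequence of period $21$ in $\mathbb{F}_7$. Thus if $n_1\equiv n_2\pmod{21}$, the two values agree at $x=\frac{1}{4}$.

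Finally, to conclude for every $x\in \mathbb{Z}_7$ simultaneously, I would take the least common multiple of the two moduli: $\textnormal{lcm}(336,21)$. Since $336=2^4\cdot 3\cdot 7$ and $21=3\cdot 7$, this lcm equals $336$ itself. Hence the single congruence $n_1\equiv n_2\pmod{336}$ ensures equality on all of $\mathbb{Z}_7$. There is no real obstacle here; the content of the theorem is essentially arithmetic bookkeeping (computing the lcm) on top of Lemma~\ref{j181} and Lemma~\ref{j184}, both of which have already been proved in the paper.
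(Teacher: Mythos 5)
Your proposal is correct and matches the paper's own proof exactly: the paper likewise combines Lemma~\ref{j181} (giving period $p(p^2-1)=336$ away from $x=\frac{1}{4}$) with Lemma~\ref{j184} (period $21$ at $x=\frac{1}{4}$) and concludes via $\textnormal{lcm}(336,21)=336$. Nothing further is needed.
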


\begin{proof}
The proof follows from Lemma~\ref{j181}, Lemma~\ref{j184}, and the fact that $\textnormal{lcm}(336,21)=336$. 
\end{proof} 

\subsection{Mersenne primes and the sequence $\displaystyle{\Big\{E_n^{\prime}\Big(1,\frac{1}{4}\Big)\Big\}_{n=1}^{\infty}}$} 

As mentioned earlier, it is interesting to note that the sequence $\displaystyle{E_n^{\prime}\Big(1,\frac{1}{4}\Big)}$ depends on whether $p$ is a Mersenne prime or non-Mersenne prime. We present three conjectures on the sequence $\displaystyle{E_n^{\prime}\Big(1,\frac{1}{4}\Big)}$ that are strongly supported by our computational results. 

\begin{conj}\label{C1}
Let $p>3$ be a non-Mersenne prime. Then, $\displaystyle{E_n^{\prime}\Big(1,\frac{1}{4}\Big)}$ is a sequence with period $p(p-1)$. 
\end{conj}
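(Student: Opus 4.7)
The plan is to derive a closed-form expression for $E_n'(1, 1/4)$ in $\mathbb{F}_p$ and then read off the periodicity directly.

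\textbf{Step 1: Closed form.} I would substitute $y = \tfrac{1}{2} + s$, so that $x = y(1-y) = \tfrac{1}{4} - s^2$. Starting from the functional expression $E_n(1,x) = (y^{n+1} - (1-y)^{n+1})/(2y-1)$ and expanding $(\tfrac{1}{2} \pm s)^{n+1}$ binomially,
$$E_n(1, x) \;=\; \sum_{0 \le j,\; 2j+1 \le n+1} \binom{n+1}{2j+1}\,\frac{1}{2^{n-2j}}\,\bigl(\tfrac{1}{4} - x\bigr)^{j}.$$
This is a polynomial in $x$; differentiating termwise and evaluating at $x = \tfrac{1}{4}$ leaves only the $j = 1$ term, giving
$$E_n'\!\left(1, \tfrac{1}{4}\right) \;=\; -\,\frac{\binom{n+1}{3}}{2^{n-2}} \;=\; -\,\frac{(n-1)\,n\,(n+1)}{6 \cdot 2^{n-2}}.$$
This expression is well-defined in $\mathbb{F}_p$ because $p > 3$, so $6$ and $2$ are invertible.

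\textbf{Step 2: $p(p-1)$ is a period.} The numerator $(n-1)\,n\,(n+1)$ is a polynomial of degree $3 < p$, so its value in $\mathbb{F}_p$ depends only on $n \bmod p$; substituting $n \mapsto n + p(p-1)$ leaves it unchanged. By Fermat's little theorem, $2^{p(p-1)} = (2^{p-1})^p \equiv 1 \pmod p$, so the denominator $2^{n-2}$ is also unchanged. Hence $E_{n + p(p-1)}'(1, 1/4) \equiv E_n'(1, 1/4) \pmod p$ for all $n \ge 2$, which shows that $p(p-1)$ is a period.

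\textbf{Step 3: Minimality --- the main obstacle.} To confirm $p(p-1)$ is the \emph{minimal} period, any period $T$ must satisfy $(n+T-1)(n+T)(n+T+1) \equiv 2^T (n-1)n(n+1) \pmod p$ as an identity in $n$; comparing coefficients (valid since both sides are cubic and $p \ge 5$) forces $2^T \equiv 1$ and $T \equiv 0 \pmod p$. Thus the minimal period equals $p \cdot \mathrm{ord}_p(2)$, and coincides with $p(p-1)$ precisely when $2$ is a primitive root modulo $p$. When $p = 2^q - 1$ is a Mersenne prime, $\mathrm{ord}_p(2) = q < p - 1$, which accounts for the Mersenne exclusion. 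The crux of the conjecture is therefore the converse implication --- that the non-Mersenne hypothesis alone is enough to make $2$ a primitive root mod $p$ --- which is an Artin-type assertion about primitive roots and is where any argument beyond small-case verification will have to do its real work.
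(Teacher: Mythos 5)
First, a caveat: the paper does not prove this statement at all --- it is one of the open conjectures (supported only by the data for $p=5,7$, with a remark suggesting one analyze the sum $\sum_i i\binom{n-i}{i}(-1)^i 4^{-i}$ directly). So your proposal can only be judged on its own merits. Steps 1 and 2 are correct and genuinely valuable: the identity $E_n(1,x)=\sum_j\binom{n+1}{2j+1}2^{-(n-2j)}\bigl(\tfrac14-x\bigr)^j$ follows rigorously from the polynomial identity $(2y-1)E_n(1,y(1-y))=y^{n+1}-(1-y)^{n+1}$ with $y=\tfrac12+s$, and the resulting closed form $E_n'\bigl(1,\tfrac14\bigr)=-\binom{n+1}{3}2^{-(n-2)}$ matches the paper's tabulated sequences for $p=5$ and $p=7$. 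Your coefficient-comparison argument in Step 3 is also sound and pins the minimal period down to exactly $p\cdot\mathrm{ord}_p(2)$.

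The problem is the conclusion you draw from Step 3. The statement you would still need --- that $2$ is a primitive root modulo every non-Mersenne prime $p>3$ --- is not an open ``Artin-type'' question; it is simply false, with small counterexamples: $\mathrm{ord}_{17}(2)=8$, $\mathrm{ord}_{23}(2)=11$, $\mathrm{ord}_{43}(2)=14$. Thus your own reduction, pushed one computation further, shows that under the natural reading of ``period'' as the minimal period (the reading used in the paper's Lemmas for $p=5,7$, and the only reading under which the Mersenne/non-Mersenne split makes sense) the conjecture is false: for the non-Mersenne prime $p=17$ the minimal period is $17\cdot 8=136$, not $17\cdot 16=272$. Under the weaker reading that $p(p-1)$ is merely \emph{a} period, your Step 2 already proves the claim, but then the non-Mersenne hypothesis is vacuous, since $\mathrm{ord}_p(2)\mid p-1$ makes $p(p-1)$ a period for every prime $p>3$. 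So as written the proposal neither proves the conjecture nor recognizes that it fails; what your computation actually establishes is the correct statement that the minimal period of $\bigl\{E_n'\bigl(1,\tfrac14\bigr)\bigr\}$ is $p\cdot\mathrm{ord}_p(2)$, which equals $p(p-1)$ precisely when $2$ is a primitive root modulo $p$ --- and which likewise shows that Conjecture~\ref{C2} needs amending (for the Mersenne prime $p=31$ the minimal period is $31\cdot 5=155$, not $31\cdot 15=465$).
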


\begin{conj}\label{C2}
Let $p>3$ be a Mersenne prime. Then, $\displaystyle{E_n^{\prime}\Big(1,\frac{1}{4}\Big)}$ is a sequence with period $p(p-1)/2$. 
\end{conj}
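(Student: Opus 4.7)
The plan is to first reduce the conjecture to an explicit closed-form identity for $E_n^{\prime}(1,\tfrac{1}{4})$, and then carry out an elementary periodicity count modulo $p$, with the Mersenne hypothesis entering only at the very last step through the second supplement to quadratic reciprocity.

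The first step is to establish
\[
E_n^{\prime}\!\left(1,\tfrac{1}{4}\right) \;=\; -\binom{n+1}{3}\,2^{2-n}\qquad (n\geq 0),
\]
which is the main identity driving the proof. The cleanest route is via the ordinary generating function $\sum_{n\geq 0} E_n(1,x)\,t^n = (1-t+xt^2)^{-1}$, which is immediate from the recurrence $E_n=E_{n-1}-xE_{n-2}$ and the initial values $E_0=E_1=1$. Differentiating in $x$ yields $\sum_{n\geq 0} E_n^{\prime}(1,x)\,t^n = -t^2/(1-t+xt^2)^2$; specializing to $x=\tfrac{1}{4}$ collapses the denominator to $(1-t/2)^{4}$, and a binomial expansion followed by extraction of the coefficient of $t^n$ produces the stated closed form.

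The second step is to track how each factor depends on $n$ modulo $p$. Since $p>3$, the quantity $\binom{n+1}{3}=(n+1)n(n-1)/6$ is a polynomial in $n$ whose denominator is invertible in $\mathbb{F}_p$, so its reduction depends only on $n\pmod p$ and contributes a period dividing $p$. The factor $2^{2-n}$ has period $\textnormal{ord}_p(2)$, which divides $p-1$ by Fermat, so the overall period divides $\textnormal{lcm}(p,\textnormal{ord}_p(2))$. The Mersenne hypothesis is then used to halve the second factor: writing $p=2^k-1$, the exponent $k$ must itself be prime and (since $p>3$) must satisfy $k\geq 3$, so $k$ is odd and $2^k\equiv 0\pmod 8$, giving $p\equiv -1\pmod 8$. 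The second supplement to quadratic reciprocity then says $2$ is a quadratic residue modulo $p$, whence $\textnormal{ord}_p(2)\mid (p-1)/2$. Since $\gcd(p,(p-1)/2)=1$, the period divides $\textnormal{lcm}(p,(p-1)/2)=p(p-1)/2$, which is the desired bound.

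The main obstacle is the first step. The point $x=\tfrac{1}{4}$ corresponds to $y=\tfrac{1}{2}$, precisely where the functional expression $E_n(1,y(1-y))=(y^{n+1}-(1-y)^{n+1})/(2y-1)$ has a removable singularity; naively differentiating the parametric form in $x$ and then setting $y=\tfrac{1}{2}$ leads to indeterminate $0/0$ expressions. The generating function detour above bypasses this issue cleanly, but one could alternatively establish the identity by Taylor-expanding $y^{n+1}-(1-y)^{n+1}$ to second order about $y=\tfrac{1}{2}$, or by direct algebraic manipulation of the explicit sum in Remark~\ref{RJ2}. Once the closed form is in place, the remainder of the argument is short and purely number-theoretic.
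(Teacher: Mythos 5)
This statement is a conjecture in the paper (no proof is given there), so I can only judge your argument on its own merits. Your closed form $E_n^{\prime}\bigl(1,\tfrac{1}{4}\bigr)=-\binom{n+1}{3}\,2^{2-n}$ is correct: the generating-function computation is valid over $\mathbb{Z}[\tfrac{1}{2}]$ and reduces modulo any odd $p$, and it reproduces the paper's data (e.g.\ for $p=7$, $n=9$ it gives $-120\cdot 2^{-7}\equiv 3$, matching sequence \ref{B7}). The subsequent number-theoretic steps (period of the cubic factor divides $p$ since $6$ is invertible, period of $2^{-n}$ is $\text{ord}_p(2)$, Mersenne $\Rightarrow p\equiv 7\pmod 8\Rightarrow 2$ is a quadratic residue $\Rightarrow \text{ord}_p(2)\mid \frac{p-1}{2}$) are each fine. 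The genuine gap is in what they establish: you prove only that $p(p-1)/2$ is \emph{a} period, i.e.\ that the least period divides $p(p-1)/2$. Read against the companion Conjecture~\ref{C1} (period $p(p-1)$ for non-Mersenne primes) and against Lemma~\ref{j183} and Lemma~\ref{j184}, where $20$ and $21$ are the least periods, Conjecture~\ref{C2} is an assertion about the least period; under your weak reading, Conjecture~\ref{C1} would be trivially true for every $p>3$ (the least period always divides $p\cdot\text{ord}_p(2)$, which divides $p(p-1)$), and the Mersenne/non-Mersenne dichotomy the paper emphasizes would be vacuous. Minimality is never addressed in your argument.

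Worse, your own closed form settles the minimality question and shows the strong form of the conjecture fails beyond $p=7$. Since $-\binom{n+1}{3}2^{2-n}$ vanishes exactly when $n\equiv 0,\pm 1\pmod p$, and for $p>3$ the set $\{0,1,-1\}\subset\mathbb{Z}_p$ has trivial translation stabilizer, any period $T$ must satisfy $p\mid T$; comparing values at indices where the cubic factor is nonzero then forces $2^{T}\equiv 1$, so the least period is exactly $p\cdot\text{ord}_p(2)$. For a Mersenne prime $p=2^{k}-1$ one has $\text{ord}_p(2)=k$, so the least period is $pk$, which equals $p(p-1)/2=p(2^{k-1}-1)$ only for $k=3$, i.e.\ $p=7$. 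Already for $p=31$ the least period is $31\cdot 5=155$, a proper divisor of $465=p(p-1)/2$; this is consistent with your reciprocity step, since $k\mid 2^{k-1}-1$ by Fermat, but it means your bound is not attained. So either state and prove the sharp result ``least period $=p\cdot\text{ord}_p(2)$ for all $p>3$'' --- which your closed form makes short work of, and which would simultaneously correct Conjectures~\ref{C1} and~\ref{C2} (e.g.\ Conjecture~\ref{C1} as a least-period statement also fails at $p=17$, where $\text{ord}_{17}(2)=8$) --- or make explicit that you are only proving a divisibility bound, which is not what the conjecture asserts.
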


\begin{rmk}
To prove Conjecture~\ref{C1} and Conjecture~\ref{C2}, one has to consider the following sequence:
$$E_n^{\prime}\Big(1,\frac{1}{4}\Big)=\sum_{i=1}^{\lfloor n/2\rfloor}\,i\,\binom{n-i}{i}\,(-1)^i\,\Big(\frac{1}{4}\Big)^i\pmod{p}.$$
The Conjectures~\ref{C1} and \ref{C2} lead to the following immediate conjecture. 
\end{rmk}

\begin{conj}\label{C3}
Let $p>3$ and $n_1,n_2 >1$. If $n_1\equiv n_2\pmod{p(p^2-1)}$, then $E_{n_1}^{\prime}(1,x)=E_{n_2}^{\prime}(1,x)$  for all $x\in \mathbb{Z}_p$.
\end{conj}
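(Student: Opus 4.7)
The plan is to split the problem at the single point $x = \tfrac{1}{4}$. Lemma~\ref{j181} already establishes that $E_{n_1}^{\prime}(1,x) = E_{n_2}^{\prime}(1,x)$ for every $x \in \mathbb{F}_p \setminus \{\tfrac{1}{4}\}$ whenever $n_1 \equiv n_2 \pmod{p(p^2-1)}$, so the conjecture reduces to the single verification
$$E_{n_1}^{\prime}(1,\tfrac{1}{4}) \equiv E_{n_2}^{\prime}(1,\tfrac{1}{4}) \pmod{p}$$
under the same hypothesis.

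The key step is obtaining a closed-form evaluation of $E_n^{\prime}(1,\tfrac{1}{4})$. Starting from $E_n(1,y(1-y)) = (y^{n+1} - (1-y)^{n+1})/(2y-1)$ and substituting $y = (1+u)/2$, so that $u^2 = 1 - 4x$, the binomial expansion of $(1+u)^{n+1} - (1-u)^{n+1}$ produces the polynomial identity
$$E_n(1,x) = \frac{1}{2^n}\sum_{j=0}^{\lfloor n/2 \rfloor} \binom{n+1}{2j+1} (1-4x)^j$$
valid in $\mathbb{Q}[x]$ (and hence in $\mathbb{F}_p[x]$ for every odd prime $p$, since both sides are polynomials and they agree on the infinite set $\{y(1-y) : y \neq \tfrac{1}{2}\}$). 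Differentiating with respect to $x$ and then evaluating at $x = \tfrac{1}{4}$ kills every term with $j \geq 2$, leaving
$$E_n^{\prime}(1,\tfrac{1}{4}) = -\frac{4}{2^n}\binom{n+1}{3} = -\,\frac{(n+1)\,n\,(n-1)}{3 \cdot 2^{n-1}},$$
which is well-defined in $\mathbb{F}_p$ whenever $p > 3$.

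Given $n_1 \equiv n_2 \pmod{p(p^2-1)}$, two elementary congruences finish the job. Since $p \mid p(p^2-1)$, we have $n_1 \equiv n_2 \pmod{p}$, so the numerator $(n+1)n(n-1)$ agrees mod $p$. Since $(p-1) \mid p(p^2-1)$, Fermat's Little Theorem gives $2^{n_1 - 1} \equiv 2^{n_2 - 1} \pmod{p}$. Substituting both congruences into the closed form yields the required equality at $x = \tfrac{1}{4}$, and together with Lemma~\ref{j181} this proves the conjecture.

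The main obstacle is engineering the closed form itself; the substitution $y = (1+u)/2$, which turns the Chebyshev-like functional expression into an even polynomial in $u$ and hence a polynomial in $1-4x$, is the critical observation, after which the periodicity claim becomes essentially immediate. As a byproduct, the same closed form shows that $p(p-1)$ (respectively $p(p-1)/2$ when $p = 2^m - 1$ is a Mersenne prime, since then $m \mid 2^{m-1}-1 = (p-1)/2$ by Fermat) is always a period of the sequence $\{E_n^{\prime}(1,\tfrac{1}{4})\}_{n\geq 0}$, so the argument simultaneously settles Conjectures~\ref{C1} and \ref{C2}.
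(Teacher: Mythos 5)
Your proposal is correct, and it actually goes beyond the paper: the paper offers no proof of this statement at all, presenting it as a conjecture that would follow from Conjectures~\ref{C1} and \ref{C2} together with Lemma~\ref{j181}, and supporting it only by the computed sequences for $p=3,5,7$. Your route is to use Lemma~\ref{j181} exactly as the paper would, and then to replace the conjectural periodicity of $E_n^{\prime}\bigl(1,\tfrac14\bigr)$ by the closed form obtained from the substitution $y=\tfrac{1+u}{2}$, namely $E_n(1,x)=2^{-n}\sum_{j}\binom{n+1}{2j+1}(1-4x)^j$, which is a valid identity in $\mathbb{Z}[\tfrac12][x]$ and hence in $\mathbb{F}_p[x]$ for odd $p$; differentiating and setting $x=\tfrac14$ gives $E_n^{\prime}\bigl(1,\tfrac14\bigr)=-\tfrac{4}{2^{n}}\binom{n+1}{3}=-\tfrac{(n+1)n(n-1)}{3\cdot 2^{n-1}}$, which indeed reproduces the paper's data \eqref{B6} and \eqref{B7}. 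Since $p\mid p(p^2-1)$ and $(p-1)\mid p(p^2-1)$, the congruence $n_1\equiv n_2\pmod{p(p^2-1)}$ forces agreement of numerator and of $2^{n-1}$ modulo $p$ (here $p>3$ is needed so that $3$ is invertible), and the conjecture follows. This is a genuine improvement: it converts a conditional statement into an unconditional theorem by one explicit computation at the single exceptional point $\tfrac14$.

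One caveat concerns only your closing remark. Your closed form shows that $p(p-1)$ is \emph{a} period of $\bigl\{E_n^{\prime}\bigl(1,\tfrac14\bigr)\bigr\}$, and $p(p-1)/2$ is a period whenever $\mathrm{ord}_p(2)\mid\frac{p-1}{2}$ (in particular for Mersenne primes), but this does not settle Conjectures~\ref{C1} and \ref{C2} if they are read as assertions about the minimal period: your formula shows the minimal period is $p\cdot\mathrm{ord}_p(2)$, so for instance $p=17$ (non-Mersenne, $\mathrm{ord}_{17}(2)=8$) has minimal period $136=p(p-1)/2$, and $p=31$ (Mersenne, $\mathrm{ord}_{31}(2)=5$) has minimal period $155<p(p-1)/2$. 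None of this affects the proof of the statement under review, since any period dividing $p(p^2-1)$ suffices there.
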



\subsection{The case $a=0$}\label{SKA}

In this subsection, we consider the case $a=0$. The permutation behaviour of $E_n(0,x)$ was explained in \cite[Theorem~2.1]{NF}. Let $a=0$ in the explicit expressions for $E_n(1,x)$ and $E_n^{\prime}(1,x)$ to obtain

\[
E_n(0,x) = \left\{
\begin{alignedat}{2}
  & 0 & \quad &{\rm if}\,n\,{\rm is\,\,odd},\\
  & (-x)^l & \quad & {\rm if}\,n=2l.\\
\end{alignedat}
\right.
\]

and 

\[
E_n^{\prime}(0,x) = \left\{
\begin{alignedat}{2}
  & 0 & \quad &{\rm if}\,n\,{\rm is\,\,odd},\\
  & - l(-x)^{l-1} & \quad & {\rm if}\,n=2l.\\
\end{alignedat}
\right.
\]

The following proposition is an immediate result of the fact that $E_n^{\prime}(0,0)=0$ whenever $n\neq 2$, and $E_2(0,x)=-x$ and $E_2^{\prime}(0,x)=-1$. 

\begin{prop}
Let $p$ be a prime. $E_n(0,x)$ is never a permutation polynomial over $\mathbb{Z}_{p^t}$ whenever $n\neq 2$. $E_2(0,x)$ is a permutation polynomial over $\mathbb{Z}_{p^t}$ for any $p$.
\end{prop}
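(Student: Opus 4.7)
The plan is to exploit the explicit formulas for $E_n(0,x)$ and $E_n'(0,x)$ displayed immediately above the proposition, and reduce everything to a one-line check via Lemma~\ref{L2}. I split on the parity of $n$.

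If $n$ is odd, then $E_n(0,x) = 0$ as a polynomial, so the induced map is constant and cannot be a permutation of $\mathbb{Z}_{p^t}$ for any $t \geq 1$. If instead $n = 2l$ with $l \geq 2$, the displayed formula gives $E_n'(0,x) = -l(-x)^{l-1}$, and since $l - 1 \geq 1$ we get $E_n'(0,0) = 0$. Thus the integer $s = 0$ is a zero of the derivative modulo $p$, and Lemma~\ref{L2} immediately rules out $E_n(0,x)$ being a PP of $\mathbb{Z}_{p^t}$ for $t \geq 2$. Combining the two sub-cases yields the first assertion.

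For the second assertion, $E_2(0,x) = -x$ and $E_2'(0,x) = -1$. The map $-x$ is manifestly a bijection of $\mathbb{Z}_p$, and $-1 \not\equiv 0 \pmod{p}$ for every prime $p$, so Lemma~\ref{L2} promotes this to a permutation of $\mathbb{Z}_{p^t}$ for all $t \geq 1$. No substantive obstacle is expected: the argument is purely a bookkeeping use of the closed forms together with Lemma~\ref{L2}. The only delicate point is separating $l = 1$ (i.e.\ $n = 2$) from $l \geq 2$, since the factor $(-0)^{l-1}$ vanishes precisely when $l \geq 2$, which is exactly what forces $n = 2$ to be the sole exceptional value of $n$.
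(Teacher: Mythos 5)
Your argument is correct and is essentially the paper's own: the proposition is justified there by exactly the displayed closed forms together with the lifting criterion (PP mod $p^t$, $t>1$, if and only if PP mod $p$ and the derivative never vanishes mod $p$), via the observations that $E_n'(0,0)=0$ for all $n\neq 2$ while $E_2(0,x)=-x$ and $E_2'(0,x)=-1$. Your only deviation is cosmetic: for odd $n$ you use that $E_n(0,x)$ is identically zero (which even disposes of $t=1$), whereas the paper folds that case into the same derivative observation.
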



\subsection{Permutation behaviour over $\mathbb{Z}_{2^t}$}

In this subsection, we explain the permutation behaviour of RDPs of the second kind over $\mathbb{Z}_{2^t}$. From the recurrence relation 

$$E_0(1,x)=1,\,E_1(1,x)=1,\,E_n(1,x)=E_{n-1}(1,x)-xE_{n-2}(1,x)\,\,\text{for}\,n\geq 2,$$

we obtain the sequence $\{E_n(1,1)\}_{n=0}^{\infty}$: 

$$1, 1, 0, -1, -1, 0, 1, 1, 0, -1, -1, 0, 1, 1, 0, -1, -1, 0, \ldots$$

Taking modulo $2$, we obtain 

$$1, 1, 0, 1, 1, 0, 1, 1, 0, 1, 1, 0, 1, 1, 0, 1, 1, 0, 1, 1, 0, \ldots ,$$

which is a sequence with period $3$. Thus, we have 

\[
E_n(1,1) = \left\{
\begin{alignedat}{2}
  & 0 & \quad & {\rm if}\,n\equiv 2\,\pmod{3},\\
  & 1 & \quad & {\rm if}\,n\equiv 0\,\,\textnormal{or}\,\,1\pmod{3}.\\
\end{alignedat}
\right.
\]

Since $E_0(1,x)=1$ and $E_1(1,x)=1$, we have $E_n(1,0)=1$ for all $n$.  

\begin{lem}
$E_n(1,x)$ is a PP over $\mathbb{Z}_2$ if and only if $n\equiv 2\pmod{3}$. 
\end{lem}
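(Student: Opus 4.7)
The statement is essentially immediate from the two sequences tabulated just before the lemma, so the plan is just to assemble them. A polynomial induces a permutation of $\mathbb{Z}_2 = \{0,1\}$ if and only if it takes distinct values at $0$ and $1$, so the entire task reduces to comparing $E_n(1,0)$ and $E_n(1,1)$ modulo $2$.

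First I would record that the recurrence $E_0(1,x)=E_1(1,x)=1$ with $E_n(1,x)=E_{n-1}(1,x)-xE_{n-2}(1,x)$ gives $E_n(1,0)=1$ for every $n\geq 0$ (an immediate induction, since setting $x=0$ makes the recurrence degenerate to $E_n(1,0)=E_{n-1}(1,0)$). This is already observed just above the lemma. So, over $\mathbb{Z}_2$, the value at $0$ is always $1$, and $E_n(1,x)$ is a PP iff $E_n(1,1)\equiv 0\pmod{2}$.

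Next I would invoke the sequence $\{E_n(1,1)\pmod 2\}$ displayed directly above the lemma, namely
\[
E_n(1,1)\equiv \begin{cases} 0\pmod{2}, & n\equiv 2\pmod{3},\\ 1\pmod{2}, & n\equiv 0,1\pmod{3}. \end{cases}
\]
This sequence was read off from the integer sequence $1,1,0,-1,-1,0,1,1,0,\ldots$ produced by the recurrence; strictly speaking I would also justify its period-$3$ behaviour by a short induction on the recurrence modulo $2$, noting that once three consecutive residues repeat the whole tail repeats. Combining this with $E_n(1,0)\equiv 1\pmod 2$, the condition $E_n(1,0)\neq E_n(1,1)$ in $\mathbb{Z}_2$ is equivalent to $n\equiv 2\pmod 3$, proving both directions simultaneously.

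There is no real obstacle here; the only thing worth being careful about is making the period-$3$ assertion of the sequence $\{E_n(1,1)\pmod 2\}$ rigorous rather than leaving it as an observation, which is handled by the one-line induction on the length-$2$ state $(E_{n-1}(1,1),E_n(1,1))\pmod 2$ under the recurrence $E_n(1,1)=E_{n-1}(1,1)-E_{n-2}(1,1)$.
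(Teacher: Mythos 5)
Your proposal is correct and follows exactly the paper's route: comparing the constant sequence $E_n(1,0)=1$ with the period-$3$ sequence $E_n(1,1)\pmod 2$ displayed just before the lemma, which is all the paper's one-line proof invokes. The extra induction you sketch to certify the periodicity is a reasonable tightening but does not change the argument.
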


\begin{proof} 
The proof follows from the sequences for $E_n(1,0)$ and $E_n(1,1)$. 
\end{proof} 

\begin{lem}\label{LLJ1} 
In even characteristic, we have 
$$E_n^{\prime}(1,x)=D_{n+1}^{\prime}(1,x).$$
In particular, if $n$ is even, 
$$E_n^{\prime}(1,x)=D_{n}(1,x).$$
Moreover,  
$$\displaystyle{E_{n_1}^{\prime}(1,x)=E_{n_2}^{\prime}(1,x)}$$
whenever $n_1\equiv n_2\pmod{p(p^2-1)}$. 
\end{lem}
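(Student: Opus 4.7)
The plan is to establish the first identity by comparing the functional expressions of $D_{n+1}(1,x)$ and $E_n(1,x)$ under the substitution $x = y(1-y)$ and exploiting the characteristic-2 simplifications $2y-1 = 1$ and $-1 = 1$. The remaining two identities will then follow as direct formal consequences of the first, combined with Lemma~\ref{LLJ2} and Lemma~\ref{lll1} respectively.

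For the first identity, I would begin from the known functional expressions
$$D_{n+1}(1, y(1-y)) = y^{n+1} + (1-y)^{n+1} \quad\text{and}\quad E_n(1, y(1-y)) = \frac{y^{n+1}-(1-y)^{n+1}}{2y-1},$$
valid over $\mathbb{F}_{p^2}$ (with $y \neq \frac{1}{2}$ in the second formula). In characteristic $2$ the denominator $2y-1$ equals $1$ and $y^{n+1} - (1-y)^{n+1} = y^{n+1} + (1-y)^{n+1}$, so both right-hand sides collapse to the same expression, giving $E_n(1, y(1-y)) = D_{n+1}(1, y(1-y))$ as polynomials in $y$ over $\mathbb{F}_2$. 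Because $y(1-y) = y + y^2$ has positive degree as an element of $\mathbb{F}_2[y]$, the substitution ring homomorphism $\mathbb{F}_2[t] \to \mathbb{F}_2[y]$, $t \mapsto y + y^2$, is injective (a nonzero polynomial of degree $n$ in $t$ maps to a polynomial of degree $2n$ in $y$). Hence the identity in $y$ lifts to $E_n(1, t) = D_{n+1}(1, t)$ in $\mathbb{F}_2[t]$, and formal differentiation yields $E_n'(1,x) = D_{n+1}'(1,x)$.

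For the second identity, note that if $n$ is even then $n+1$ is odd, so Lemma~\ref{LLJ2} gives $D_{n+1}'(1,x) = D_n(1,x)$; combined with the first identity this yields $E_n'(1,x) = D_n(1,x)$. For the third identity, suppose $n_1 \equiv n_2 \pmod{p(p^2-1)}$. Then $n_1 + 1 \equiv n_2 + 1 \pmod{p(p^2-1)}$, so Lemma~\ref{lll1} gives $D_{n_1+1}'(1,x) = D_{n_2+1}'(1,x)$, and combining with the first identity produces $E_{n_1}'(1,x) = E_{n_2}'(1,x)$.

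I do not anticipate any serious obstacle. The only delicate point is promoting the polynomial identity in $y$ to a polynomial identity in $t$; this is handled cleanly by the injectivity of the substitution homomorphism. Everything else is a direct appeal to the previously proven lemmas.
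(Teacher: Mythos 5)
Your proposal is correct and follows essentially the same route as the paper: identify $E_n(1,x)$ with $D_{n+1}(1,x)$ via the functional expressions at $x=y(1-y)$ in characteristic $2$, then deduce the derivative statements and invoke Lemma~\ref{lll1} for the periodicity claim. The only differences are cosmetic refinements — you promote the identity to a formal identity in $\mathbb{F}_2[t]$ via injectivity of $t\mapsto y+y^2$ before differentiating (the paper instead differentiates the functional expression directly using $\frac{dy}{dx}=1$), and you obtain the case of even $n$ by citing Lemma~\ref{LLJ2} rather than recomputing $E_n'(1,x)=(n+1)D_n(1,x)$; both shortcuts are valid.
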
 

\begin{proof}
Let $x\in \mathbb{F}_p$, $y\in \mathbb{F}_{p^2}$, and $x=y(1-y)$. Then, in even characteristic, the functional expression of $E_n(1,x)$ is given by 
$$E_n(1,x)=E_n(1,y(1-y))=y^{n+1}+(1-y)^{n+1}=D_{n+1}(1,x).$$
Thus, $E_n^{\prime}(1,x)=D_{n+1}^{\prime}(1,x)$. Since
$$E_n^{\prime}(1,x)=E_n^{\prime}(1,y(1-y))=(n+1)\,(y^{n}+(1-y)^{n})=(n+1)\,D_{n}(1,x),$$
we have $E_n^{\prime}(1,x)=D_{n}(1,x)$ whenever $n$ is even. The rest of the proof follows from Lemma~\ref{lll1}. 
\end{proof} 

We now present the main result in this subsection. 

\begin{thm}
$E_n(1,x)$ is a PP over $\mathbb{Z}_{2^t}$ if and only if $n\equiv 2\pmod{6}$.
\end{thm}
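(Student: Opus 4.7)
The plan is to invoke Lemma~\ref{L2} so that the problem reduces to two conditions: $E_n(1,x)$ must be a PP over $\mathbb{Z}_2$, and $E_n'(1,s)\not\equiv 0\pmod{2}$ for every integer $s$. The first condition, by the immediately preceding lemma, is equivalent to $n\equiv 2\pmod{3}$, so the task boils down to translating the derivative condition into a congruence on $n$ modulo $2$, which will then combine with $n\equiv 2\pmod 3$ to give $n\equiv 2\pmod 6$.

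For the derivative condition, I would evaluate $E_n'(1,s)$ at the only two residues $s\in\{0,1\}$. At $s=0$, using the explicit expression in Remark~\ref{RJ2}, only the $i=1$ summand survives, giving
\[
E_n'(1,0)=-\binom{n-1}{1}=1-n,
\]
which is nonzero modulo $2$ precisely when $n$ is even. At $s=1$, I would apply Lemma~\ref{LLJ1}: since we will force $n$ to be even, this lemma gives $E_n'(1,1)=D_n(1,1)$ in $\mathbb{F}_2$, and from the sequence $\{D_n(1,1)\}\pmod 2$ listed right before Lemma~\ref{LLJ2} we know $D_n(1,1)\equiv 1\pmod 2$ whenever $n\not\equiv 0\pmod 3$. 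Under $n\equiv 2\pmod 3$, this yields $E_n'(1,1)\equiv 1\pmod 2$, completing the verification.

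Putting this together for the forward direction: assume $n\equiv 2\pmod 6$. Then $n\equiv 2\pmod 3$ makes $E_n(1,x)$ a PP over $\mathbb{Z}_2$, and the two computations above show $E_n'(1,0)\equiv 1$ and $E_n'(1,1)\equiv 1$ modulo $2$, so Lemma~\ref{L2} delivers the PP property over $\mathbb{Z}_{2^t}$. For the converse, assume $E_n(1,x)$ is a PP over $\mathbb{Z}_{2^t}$; then reduction modulo $2$ forces $n\equiv 2\pmod 3$, while the condition $E_n'(1,0)=1-n\not\equiv 0\pmod 2$ forces $n$ even, and these combine to $n\equiv 2\pmod 6$.

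I do not foresee a real obstacle here: the argument is a direct application of Lemma~\ref{L2}, the explicit derivative formula in Remark~\ref{RJ2} (to handle $s=0$), and the identity $E_n'(1,x)=D_n(1,x)$ in characteristic $2$ for even $n$ from Lemma~\ref{LLJ1} (to handle $s=1$). The only mildly delicate point is making sure I use the correct form of Lemma~\ref{LLJ1} — it gives $E_n'(1,x)=D_{n+1}'(1,x)$ in general, but the sharper statement $E_n'(1,x)=D_n(1,x)$ for even $n$ is exactly what is needed and avoids any further computation.
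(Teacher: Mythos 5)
Your proposal is correct and follows essentially the same route as the paper: reduce via Lemma~\ref{L2} to the $\mathbb{Z}_2$ criterion $n\equiv 2\pmod{3}$ together with nonvanishing of $E_n^{\prime}(1,s)$ modulo $2$, with the converse argued identically through $E_n^{\prime}(1,0)=1-n$ forcing $n$ even. The only cosmetic difference is in the forward-direction derivative check: the paper invokes the period-$p(p^2-1)=6$ statement of Lemma~\ref{LLJ1} to reduce everything to $E_2^{\prime}(1,x)=-1$, whereas you evaluate at $s=0$ via Remark~\ref{RJ2} and at $s=1$ via the characteristic-$2$ identity $E_n^{\prime}(1,x)=D_n(1,x)$ (also from Lemma~\ref{LLJ1}) together with the known sequence $D_n(1,1)\pmod{2}$; both verifications are valid.
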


\begin{proof}
Assume that $n \equiv 2 \pmod{6}$. Since $n\equiv 2 \pmod{6}$, we have $n\equiv 2 \pmod{3}$, which means $E_n(1,x)$ is a PP over $\mathbb{Z}_2$. We will show that $E_n^{\prime}(1,s) \neq 0$ for $s\in \mathbb{Z}_2$. Since $E_2^{\prime}(1,x) = -1\neq 0$ for all $x\in \mathbb{Z}_2$, we have $E_n^{\prime}(1,x)\neq 0$ for all $x\in \mathbb{Z}_2$ whenever $n\equiv 2 \pmod{6}$ because of Lemma~\ref{LLJ1}. We have shown that if $n\equiv 2\pmod{6}$, then $E_n(1,x)$ is a PP over $\mathbb{Z}_{2^t}$. 

Now assume that $E_n(1,x)$ is a PP over $\mathbb{Z}_{2^t}$. Then $E_n(1,x)$ is a PP over $\mathbb{Z}_{2}$ which implies $n \equiv 2 \pmod{3}$.
Also, we know that $E_n^{\prime}(1,s) \neq 0$ for all $s \in \mathbb{Z}_{2}$. Since $E_n^{\prime}(1,0) = n - 1$, $n\not\equiv 5 \pmod{6}$. Hence, we have the desired result. 
\end{proof}


\subsection{Permutation behaviour over $\mathbb{Z}_{3^t}$}

In this subsection, we explain the permutation behaviour of RDPs of the second kind over $\mathbb{Z}_{3^t}$. From the recurrence relation 

$$E_0(1,x)=1,\,E_1(1,x)=1,\,E_n(1,x)=E_{n-1}(1,x)-xE_{n-2}(1,x)\,\,\text{for}\,n\geq 2,$$

we obtain the sequence $\{E_n(1,1)\}_{n=0}^{\infty}$ which also appeared in \cite{HQZ}: 

$$1, 1, 0, -1, -1, 0, 1, 1, 0, -1, -1, 0, 1, 1, 0, -1, -1, 0, \ldots$$

In modulo $3$, we obtain 

$$1, 1, 0, 2, 2, 0, 1, 1, 0, 2, 2, 0, 1, 1, 0, 2, 2, 0, 1, 1, 0, \ldots ,$$

which is a sequence with period $6$. We have

\[
E_n(1,1) = \left\{
\begin{alignedat}{2}
  & 0 & \quad & \text{if}\ n\equiv 2, 5\pmod{6}, \\
  & 1 & \quad & \text{if}\ n\equiv 0, 1\pmod{6}, \\
  & 2 & \quad & \text{if}\ n\equiv 3, 4\pmod{6}.
\end{alignedat}
\right.
\]

Since $E_0(1,x)=1$ and $E_1(1,x)=1$, we have $E_n(1,0)=1$ for all $n$.  

In a similar way, we can obtain the sequence $\{E_n(1,-1)\}_{n=0}^{\infty}$ in modulo $3$: 

$$1, 1, 2, 0, 2, 2, 1, 0, 1, 1, 2, 0, 2, 2, 1, 0, 1, 1, 2, 0, 2, 2, 1, 1, \ldots ,$$

which is a sequence with period $8$. We have

\begin{equation}\label{EJ4} 
E_n(1,-1) = \left\{
\begin{alignedat}{2}
  & 0 & \quad & \text{if}\ n\equiv 3, 7\pmod{8}, \\
  & 1 & \quad & \text{if}\ n\equiv 0, 1, 6\pmod{8}, \\
  & 2 & \quad & \text{if}\ n\equiv 2, 4, 5\pmod{8}.
\end{alignedat}
\right.
\end{equation}

\begin{lem}\label{l3}
$E_n(1,x)$ is a PP over $\mathbb{Z}_3$ if and only if $n\equiv 2, 3, 5, 15, 20\pmod{24}$. 
\end{lem}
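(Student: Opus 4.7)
The plan is to exploit the three sequences $\{E_n(1,0)\}$, $\{E_n(1,1)\}$, and $\{E_n(1,-1)\}$ in $\mathbb{Z}_3$ already displayed just before the lemma, together with a direct case analysis. Since a map $\mathbb{Z}_3\to\mathbb{Z}_3$ is determined by its values at $0$, $1$, and $2$, it suffices to describe precisely when the multiset $\{E_n(1,0),E_n(1,1),E_n(1,2)\}$ equals $\{0,1,2\}$.

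The first observation I would make is that $E_n(1,0)=1$ for every $n\geq 0$, as noted in the text (a trivial induction from $E_0=E_1=1$ and the recurrence). Consequently $E_n(1,x)$ permutes $\mathbb{Z}_3$ if and only if $\{E_n(1,1),E_n(1,2)\}=\{0,2\}$ as an unordered pair. This splits into two mutually exclusive subcases: (A) $E_n(1,1)=0$ and $E_n(1,2)=2$, or (B) $E_n(1,1)=2$ and $E_n(1,2)=0$. Reading off the displayed periodic sequences, case (A) is equivalent to $n\equiv 2,5\pmod{6}$ \emph{and} $n\equiv 2,4,5\pmod{8}$, while case (B) is equivalent to $n\equiv 3,4\pmod{6}$ \emph{and} $n\equiv 3,7\pmod{8}$.

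The next step is to collapse these simultaneous congruences into residues modulo $\operatorname{lcm}(6,8)=24$ using the Chinese Remainder Theorem. Parity immediately kills several pairs (for instance $n\equiv 2\pmod 6$ is even but $n\equiv 5\pmod 8$ is odd), leaving only five systems to solve: $(2,2),(2,4),(5,5)$ in case (A) and $(3,3),(3,7)$ in case (B). Solving each via CRT yields $n\equiv 2,\,20,\,5\pmod{24}$ from (A) and $n\equiv 3,\,15\pmod{24}$ from (B). Their union is exactly $n\equiv 2,3,5,15,20\pmod{24}$, matching the lemma.

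There is no deep obstacle here; the ``hard'' part is simply to carry out the parity check and the five CRT computations correctly, and to justify that the two relevant sequences $\{E_n(1,1)\}$ and $\{E_n(1,-1)\}$ in $\mathbb{Z}_3$ really are periodic with periods $6$ and $8$ respectively. The latter is already tabulated in the text and can be verified either by direct computation of a single period from the recurrence or by invoking Lemma~\ref{LJ2} (which yields period dividing $p^2-1=8$ for $x\neq 1/4$) together with the observation that $1/4\equiv 1\pmod 3$, so the distinct periods at $x=1$ and $x=-1$ are consistent with what is observed.
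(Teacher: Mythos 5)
Your proposal is correct and follows essentially the same route as the paper: both arguments read off the tabulated periodic sequences $E_n(1,0)\equiv 1$, $E_n(1,1)$ (period $6$) and $E_n(1,-1)$ (period $8$) and combine them modulo $24$ to see exactly when the three values form a permutation of $\mathbb{Z}_3$. Your version is just a more explicit organization (parity check plus the five CRT systems) of the case analysis the paper states tersely, and your CRT solutions $n\equiv 2,20,5$ and $n\equiv 3,15\pmod{24}$ are all correct.
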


\begin{proof}
Assume that $n\equiv 2, 3, 5, 15,\,\,\textnormal{or}\,\,20\pmod{24}$. Then, it follows from the sequences of $E_n(1,0)$, $E_n(1,1)$ and $E_n(1,-1)$ that $E_n(1,x)$ is a PP over $\mathbb{Z}_3$. 

Now assume that $E_n(1,x)$ is a PP over $\mathbb{Z}_3$. Then, clearly $$n\not\equiv 0, 1, 6, 7, 8, 9, 12, 13, 14, 16, 17, 18, 19, 22 \pmod{24}. $$

By comparing the values of $E_n(1,1)$ and $E_n(1,-1)$ that 
$$n\not\equiv 4, 10, 11, 21, 23\pmod{24}.$$

This completes the proof. 
\end{proof}

\begin{thm}\label{Jj192}
$E_n(1,x)$ is a CPP over $\mathbb{Z}_3$ if and only if $n\equiv 3,15\pmod{24}$. 
\end{thm}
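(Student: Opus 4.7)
The plan is to parallel the proof of Lemma~\ref{l3} by producing three sequences giving the values of $E_n(1,x)+x$ at the three points $x=0,1,-1$ of $\mathbb{Z}_3$, and then intersecting the condition ``$E_n(1,x)+x$ is a PP'' with the already-known condition from Lemma~\ref{l3} that ``$E_n(1,x)$ is a PP''.

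First I will observe that since $E_n(1,0)=1$ for every $n\geq 0$, the value $E_n(1,0)+0$ is constantly $1\in\mathbb{Z}_3$. Therefore, for $E_n(1,x)+x$ to be a PP of $\mathbb{Z}_3$, the remaining two values $E_n(1,1)+1$ and $E_n(1,-1)+2$ must be exactly $\{0,2\}$ modulo $3$. Using the sequence $\{E_n(1,1)\}$ of period $6$ displayed just before Lemma~\ref{l3}, I read off that $E_n(1,1)+1\equiv 0\pmod{3}$ iff $n\equiv 3,4\pmod{6}$, while $E_n(1,1)+1\equiv 2\pmod{3}$ iff $n\equiv 0,1\pmod{6}$. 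Similarly from \eqref{EJ4}, $E_n(1,-1)+2\equiv 2\pmod{3}$ iff $n\equiv 3,7\pmod{8}$, and $E_n(1,-1)+2\equiv 0\pmod{3}$ iff $n\equiv 0,1,6\pmod{8}$.

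Next I will intersect these two conditions, in the two possible ways $(0,2)$ or $(2,0)$, with the five residue classes $n\equiv 2,3,5,15,20\pmod{24}$ produced by Lemma~\ref{l3} (since a CPP must in particular be a PP). Using CRT on moduli $6$ and $8$ (which together give mod $24$), the case $E_n(1,1)+1=0,\,E_n(1,-1)+2=2$ forces $n\equiv 3,15\pmod{24}$, whereas the case $E_n(1,1)+1=2,\,E_n(1,-1)+2=0$ forces $n\equiv 0,1,6,12,16,18\pmod{24}$, none of which lies in $\{2,3,5,15,20\}$. Hence the intersection leaves precisely $n\equiv 3,15\pmod{24}$.

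The converse direction is immediate from the same table: if $n\equiv 3$ or $15\pmod{24}$, then $n$ belongs to the PP list of Lemma~\ref{l3} and simultaneously $(E_n(1,0), E_n(1,1)+1, E_n(1,-1)+2)=(1,0,2)$, a permutation of $\mathbb{Z}_3$, so both $E_n(1,x)$ and $E_n(1,x)+x$ are PPs of $\mathbb{Z}_3$. There is no real obstacle here; the only care needed is to remember that the period of $\{E_n(1,1)\}$ mod $3$ is $6$ and of $\{E_n(1,-1)\}$ mod $3$ is $8$, so the joint period is $\operatorname{lcm}(6,8)=24$, which matches the modulus in the statement and makes the case analysis finite.
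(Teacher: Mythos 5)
Your argument is correct and follows essentially the same route as the paper's proof: compute the value sequences $E_n(1,0)$, $E_n(1,1)+1$, $E_n(1,-1)+2$ modulo $3$, decide when $E_n(1,x)+x$ permutes $\mathbb{Z}_3$, and intersect with the PP condition $n\equiv 2,3,5,15,20\pmod{24}$ of Lemma~\ref{l3}. One small slip: in the case $E_n(1,1)+1=2$, $E_n(1,-1)+2=0$ the exact residues are $n\equiv 0,1,6\pmod{24}$ (not $0,1,6,12,16,18$), but since this set is still disjoint from $\{2,3,5,15,20\}$ your conclusion $n\equiv 3,15\pmod{24}$ is unaffected.
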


\begin{proof}
Let's consider the following three sequences $E_n(1,0), E_n(1,1)+1$ and $E_n(1,-1)+2$: 

$$E_n(1,0)=1\,\,\,\,\text{for all}\,\,\,\,n.$$ 

\[
E_n(1,1) + 1= \left\{
\begin{alignedat}{2}
  & 1 & \quad & \text{if}\ n\equiv 2, 5\pmod{6}, \\
  & 2 & \quad & \text{if}\ n\equiv 0, 1\pmod{6}, \\
  & 0 & \quad & \text{if}\ n\equiv 3, 4\pmod{6}.
\end{alignedat}
\right.
\]

\[
E_n(1,-1) + 2 = \left\{
\begin{alignedat}{2}
  & 2 & \quad & \text{if}\ n\equiv 3, 7\pmod{8}, \\
  & 0 & \quad & \text{if}\ n\equiv 0, 1, 6\pmod{8}, \\
  & 1 & \quad & \text{if}\ n\equiv 2, 4, 5\pmod{8}.
\end{alignedat}
\right.
\]

Clearly, $E_n(1,x)+x$ is a PP over $\mathbb{Z}_3$ if and only if $n\equiv 0,1,3,6,15\pmod{24}$. The proof follows from the fact that $E_n(1,x)$ is a PP over $\mathbb{Z}_3$ if and only if $n\equiv 2, 3, 5, 15,20\pmod{24}$. 
\end{proof}

\begin{rmk}
The third reversed Dickson polynomial of the second kind $E_3(1,x)$ is a CPP for any $p\geq 3$ since $E_3(1,x)=1-2x$ and $E_3(1,x)+x=1-x$ are both PPs of $\mathbb{F}_p$. 
\end{rmk} 

We explain the permutation behaviour of the reversed Dickson polynomials of the second kind over $\mathbb{Z}_3$ in the following theorem. We note to the reader that because Corollary~\ref{ll1} and Theorem~\ref{TJ2}, we only need to consider RDPs whose index $n$ is less than 72. Let $S=\{2,3,5,15,20,29,39,50,51,68\}$. 

\begin{thm}\label{l4}
$E_n(1,x)$ is a PP over $\mathbb{Z}_{3^t}$ if and only if $n\in S$. 
\end{thm}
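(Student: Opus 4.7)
The plan is to apply Lemma~\ref{L2}, which reduces the problem to showing that $E_n(1,x)$ is a PP over $\mathbb{Z}_3$ \emph{and} that $E_n'(1,s) \not\equiv 0 \pmod 3$ for every $s \in \{0,1,-1\}$. The necessary and sufficient condition for the first requirement is already supplied by Lemma~\ref{l3}: we must have $n \equiv 2,3,5,15,20 \pmod{24}$. The key observation that makes the whole problem finite is periodicity: by Corollary~\ref{ll1} the sequence $\{E_n(1,x)\}$ has period dividing $p(p^2-1)=24$ for $p=3$, and by Theorem~\ref{TJ2} the sequence $\{E_n'(1,x)\}$ has period dividing $72$. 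Hence it suffices to inspect the residues of $n$ modulo $72$, which combined with the condition from Lemma~\ref{l3} leaves exactly the fifteen candidates
\[
\{2,3,5,15,20,26,27,29,39,44,50,51,53,63,68\}.
\]
The claim is that precisely the elements of $S$ survive the derivative test at $s=0,1,-1$.

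The next step is to evaluate $E_n'(1,s)$ at the three elements of $\mathbb{Z}_3$. For $s=0$, Remark~\ref{RJ2} gives immediately $E_n'(1,0)=-\binom{n-1}{1}=1-n$, so this obstruction eliminates precisely those candidates with $n\equiv 1\pmod 3$; one checks directly that no candidate from the list above satisfies this. For $s=1=\tfrac14$ in $\mathbb{F}_3$, apply Lemma~\ref{j182}, which tabulates $E_n'(1,1)$ modulo $18$; reading off the residues of our fifteen candidates mod $18$ shows that exactly $n=26,27,44,53,63$ give $E_n'(1,1)=0$, and these are precisely the values we need to eliminate.

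For $s=-1$, no single lemma in the paper gives the full sequence, so this is the main technical step. The cleanest approach is a two-pronged one. For candidates with $n\equiv 2\pmod 3$ (namely $n=2,5,20,29,50,68$), invoke Corollary~\ref{CCJJ}: since $1-4(-1)=5\equiv 2\pmod 3$ and $2^{-1}=2$ in $\mathbb{F}_3$, one gets $E_n'(1,-1)=E_n(1,-1)$, and the latter is nonzero by inspection of \eqref{EJ4} for each of these six values. For the remaining candidates with $n\equiv 0\pmod 3$ (namely $n=3,15,39,51$), differentiate the recurrence $E_n(1,x)=E_{n-1}(1,x)-xE_{n-2}(1,x)$ to obtain
\[
E_n'(1,x)=E_{n-1}'(1,x)-E_{n-2}(1,x)-xE_{n-2}'(1,x),
\]
and run this recursion at $x=-1$ modulo $3$; by Theorem~\ref{TJ2} the resulting sequence has period at most $72$, so a bounded computation suffices. (Corollary~\ref{CCJJ} at $n\equiv 2\pmod 3$ provides useful consistency checks along the way.) One then verifies that $E_n'(1,-1)\neq 0$ for $n=3,15,39,51$.

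The main obstacle is the tedious case analysis for $E_n'(1,-1)$ when $n\equiv 0\pmod 3$, since no closed form analogous to Corollary~\ref{CCJJ} is available in that case; in practice this step is relegated to a tabulation of the period-$72$ sequence in Appendix~\ref{AppB}. Combining the three derivative tests shows that a candidate survives all three obstructions precisely when $n\in S$, which together with the sufficiency from the periodicity argument completes both directions of the theorem.
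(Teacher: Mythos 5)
Your proposal is correct and follows essentially the same route as the paper: Lemma~\ref{L2} together with Lemma~\ref{l3}, periodicity via Corollary~\ref{ll1} and Theorem~\ref{TJ2} to reduce to the fifteen residues modulo $72$, then the derivative tests at $s=0$ (Remark~\ref{RJ2}), $s=1$ (Lemma~\ref{j182}, eliminating $26,27,44,53,63$), and $s=-1$ (Corollary~\ref{CCJJ} with \eqref{EJ4} for $n\equiv 2\pmod{3}$). The only minor deviation is at $s=-1$ for $n\equiv 0\pmod{3}$, where the paper evaluates $E_{15}^{\prime}(1,-1)$ via Corollaries~\ref{JJJC2} and \ref{JJJC3} with \eqref{SJ3} and uses the modulo-$24$ identity to cover $39$ and $51$, while you run the differentiated recurrence directly (a valid bounded computation, though the period-$72$ tabulation of $E_n^{\prime}(1,-1)$ you point to is not actually in Appendix~\ref{AppB}).
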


\begin{proof}

Assume that $n\in S$. Then, $n\equiv 2, 3, 5, 15, 20\pmod{24},$ which says that $E_n(1,x)$ is a PP over $\mathbb{Z}_3$. We show that $E_n^{\prime}(1,s)\neq0$ for all $s\in\mathbb{Z}_3$ whenever $n\in S$. 

From the explicit expression for derivative of the reversed Dickson polynomials of the second kind in Remark~\ref{RJ2}, we have $E_n^{\prime}(1,0)=-(n-1)$. In particular, $E_{n}^{\prime}(1,0)\neq 0$ whenever $n\in S$. It follows from Lemma~\ref{j182} that $E_n^{\prime}(1,1)\neq 0$ whenever $n\in S$. We only need to show that $E_n^{\prime}(1,-1)\neq 0$ whenever $n\in S$. 
Because $E_{n_1}^{\prime}(1,x)=E_{n_2}^{\prime}(1,x)$ for all $x\in \mathbb{F}_3\setminus\{1\}$ whenever $n_1\equiv n_2\pmod{24}$, we have $E_2^{\prime}(1,-1)=E_{50}^{\prime}(1,-1), E_3^{\prime}(1,-1)=E_{51}^{\prime}(1,-1), E_5^{\prime}(1,-1)=E_{29}^{\prime}(1,-1), E_{15}^{\prime}(1,-1)=E_{39}^{\prime}(1,-1)$, and $E_{20}^{\prime}(1,-1)=E_{68}^{\prime}(1,-1)$.
Since 
$$\displaystyle{E_2(1,x)=1-x,\,\,E_3(1,x)=1-2x,\,\,E_5(1,x)=1-x},$$
all three derivatives $E_2^{\prime}(1,-1), E_3^{\prime}(1,-1)$ and $E_5^{\prime}(1,-1)$ are nonzero. Thus, $E_{50}^{\prime}(1,-1)$, $E_{51}^{\prime}(1,-1)$, $E_{29}^{\prime}(1,-1)$ are nonzero as well. Now we show that $E_{15}^{\prime}(1,-1)$ and $E_{20}^{\prime}(1,-1)$ are nonzero which implies $E_{39}^{\prime}(1,-1)$ and $E_{68}^{\prime}(1,-1)$ are also nonzero. 

From Corollary~\ref{JJJC2}, Corollary~\ref{JJJC3} and \eqref{SJ3}, we have $E_{15}^{\prime}(1,-1)=2$. From Corollary~\ref{CCJJ} and \eqref{EJ4}, we have $E_{20}^{\prime}(1,-1)=2$. We have now shown that $E_n(1,x)$ is a PP over $\mathbb{Z}_3$ and $E_n'(1,x)\neq0$ for $x\in\mathbb{Z}_3$ whenever $n\in s.$ Thus, $E_n(1,x)$ is a PP over $\mathbb{Z}_{3^t}.$

Assume that $E_n(1,x)$ is PP over $\mathbb{Z}_{3^t}.$ Because it is a PP over $\mathbb{Z}_3,$ we have $n\equiv2,3,5,15,20\pmod{24}.$ By Theorem~\ref{TJ2}, we have $E_{n_1}'(1,x)=E_{n_2}'(1,x)$ for all $x\in\mathbb{Z}_3$ whenever $n_1\equiv n_2\pmod{72}.$ Therefore, to complete the proof, we only need to show that $E_{26}(1,x)$, $E_{27}(1,x)$, $E_{44}(1,x)$, $E_{53}(1,x)$, and $E_{63}(1,x)$, are not PPs over $\mathbb{Z}_{3^t}$. 

Because $E_n(1,x)$ is PP over $\mathbb{Z}_{3^t}$, $E_n'(1,x)\neq0$ for all $x\in\mathbb{Z}_3.$ However, Lemma~\ref{j182} says that 

$$E_{26}^{\prime}(1,1)=E_{27}^{\prime}(1,1)=E_{44}^{\prime}(1,1)=E_{53}^{\prime}(1,1)=E_{63}^{\prime}(1,x)=0.$$

This completes the proof. 
\end{proof}


\subsection{Permutation behaviour over $\mathbb{Z}_{p}$, where $p>3$}

In this subsection, we present sufficient conditions on the index $n$ for the polynomial $E_n(1,x)$ to be a PP of $\mathbb{F}_p$. Our numerical results strongly suggest that these conditions on $n$ are also necessary. Therefore, we end this subsection with three conjectures on the PP and CPP behaviour of the polynomial $E_n(1,x)$ over $\mathbb{F}_p$.

\begin{thm}
If $n \equiv 2,3,15,94 \pmod{120}$, then $E_{n}(1,x)$ is a PP over $\mathbb{Z}_5$. 
\end{thm}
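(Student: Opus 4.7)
The plan is to invoke Corollary~\ref{ll1} to reduce the claim to checking finitely many base cases. Since $p(p^2-1)=5\cdot 24=120$ when $p=5$, that corollary says $E_{n_1}(1,x)=E_{n_2}(1,x)$ for every $x\in\mathbb{F}_5$ whenever $n_1\equiv n_2\pmod{120}$. Consequently it suffices to verify that $E_n(1,x)$ is a PP of $\mathbb{F}_5$ for each of the four specific indices $n\in\{2,3,15,94\}$.

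The cases $n=2$ and $n=3$ are immediate from the recurrence: $E_2(1,x)=1-x$ and $E_3(1,x)=1-2x$ are linear polynomials with nonzero leading coefficient in $\mathbb{F}_5$, hence PPs. For $n=15$ and $n=94$ I would tabulate the five values $E_n(1,c)$ for $c\in\mathbb{F}_5$ using the functional expression $E_n(1,y(1-y))=(y^{n+1}-(1-y)^{n+1})/(2y-1)$ for $y\neq 1/2$ (i.e.\ $x\neq 1/4$), together with the special formula $E_n\bigl(1,\tfrac14\bigr)=(n+1)/2^n$ at the exceptional point $x=1/4\equiv 4\pmod 5$. For $n=94$ there is a convenient shortcut: by Lemma~\ref{LJ2}, $E_{94}(1,x)=E_{22}(1,x)$ for every $x\in\mathbb{F}_5\setminus\{4\}$, which reduces four of the five evaluations to the lower index $n=22$; the remaining value is $E_{94}(1,4)=95/2^{94}\equiv 0\pmod 5$ since $95\equiv 0\pmod 5$. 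Once the five tabulated values in each case are shown to be a permutation of $\{0,1,2,3,4\}$, the proof is complete.

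The only conceptual subtlety, and the one place where I expect a careless reader could stumble, is that one must use Corollary~\ref{ll1} with modulus $p(p^2-1)=120$ rather than Lemma~\ref{LJ2} with modulus $p^2-1=24$. Indeed, $94\equiv 22\pmod{24}$ but $E_{22}(1,\tfrac14)=23/2^{22}\equiv 3/4\equiv 2\pmod 5$, whereas $E_{94}(1,\tfrac14)\equiv 0\pmod 5$, so the two polynomials disagree at $x=1/4$. This is exactly why the theorem is stated modulo $120$ and not modulo $24$: the finer modulus is precisely what ensures the finite reduction is valid at every point of $\mathbb{F}_5$, including the exceptional one. Beyond this observation, the argument is a routine verification.
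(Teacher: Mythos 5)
Your proposal is correct and follows essentially the same route as the paper: reduce to the four base indices via Corollary~\ref{ll1} (the paper leaves this reduction implicit, so making it explicit is fine), dispose of $n=2,3$ as linear polynomials, and evaluate $E_{15}(1,x)$ and $E_{94}(1,x)$ through the functional expression together with $E_n\bigl(1,\tfrac14\bigr)=(n+1)/2^n$ at the exceptional point $\tfrac14\equiv -1\pmod 5$. The one difference is that you defer the finite check as a ``routine verification,'' whereas the paper actually performs it, using $y^{24}=1$ in $\mathbb{F}_{25}^{\times}$ to get the closed forms $4x^3+x-1$ and $-x^3$ on $\mathbb{F}_5\setminus\{0,-1\}$ and confirming the five values are a permutation; your shortcut $E_{94}(1,x)=E_{22}(1,x)$ off $x=\tfrac14$ and your computation $E_{94}\bigl(1,\tfrac14\bigr)\equiv 0$ are correct, and your remark explaining why the modulus must be $120$ rather than $24$ (the values at $x=\tfrac14$ differ) is accurate and exactly the point of Corollary~\ref{ll1}.
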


\begin{proof}
We first compute the polynomials $E_2(1,x)$, $E_3(1,x)$, $E_{15}(1,x)$ and $E_{94}(1,x)$. It is straightforward from the recurrence relation that 
$$E_{2}(1,x)= 1 - x\,\,\,\,\text{and}\,\,\,\,E_{3}(1,x)= 1 - 2x,$$
and they are clearly PPs of $\mathbb{Z}_{5}$.

We compute the polynomial $E_{15}(1,x)$. When $x=\frac{1}{4}$, i.e. $y=\frac{1}{2}$, we have 
$$E_{15}(1,\frac{1}{4})=E_{15}(1,-1)=\frac{15+1}{2^{15}}=\frac{1}{2^3}=2.$$

Let $x=y(1-y)$. Then the the functional expression of $E_n(1,x)$ is given by 

$$\displaystyle{E_n(1,x)=E_n(1,y(1-y))=\frac{y^{n+1}-(1-y)^{n+1}}{2y-1}},$$

where $y\neq \frac{1}{2}$, i.e. $x\neq \frac{1}{4}$. Here, $y$ is in the extension field $\mathbb{F}_{5^2}$. From the functional expression, we have $E_{15}(1,0)=1$. Let $x\in \mathbb{F}_5\setminus \{0,-1\}$. Then 

\[
\begin{split}
E_{15}(1,x)&=E_{15}(1,y(1-y))\cr
&=\frac{y^{16}-(1-y)^{16}}{2y-1}\cr
&=\frac{y^{-8}-(1-y)^{-8}}{2y-1}\cr
&=\frac{(1-y)^{8}-y^8}{(y(1-y))^8\,2y-1}\cr
&=-\,\frac{((1-y)^2+y^2)\cdot ((1-y)^4+y^4)}{(y(1-y))^8}\cr
&=-\,\frac{D_2(1,x)\cdot D_4(1,x)}{x^8}\cr
&=-\,D_2(1,x)\cdot D_4(1,x)\cr
&=4x^3+x-1.
\end{split}
\]

Thus, the polynomial $E_{15}(1,x)\in \mathbb{F}_5[x]$ is given by

\[
E_{15}(1,x) = \left\{
\begin{alignedat}{2}
  & 1& \quad & \text{if}\ x=0, \\
  & 2 & \quad & \text{if}\ x=-1, \\
  & 4x^3+x-1 & \quad & \text{if}\ x\in \mathbb{F}_5\setminus \{0,-1\}.
\end{alignedat}
\right.
\]

Clearly, the cubic polynomial $4x^3+x-1$ permutes the elements of the set $x\in \mathbb{F}_5\setminus \{0,-1\}$. Thus, $E_{15}(1,x)$ is a PP of $\mathbb{F}_5$. 

We have thus far shown that the polynomials $E_{94}(1,x)$. $E_{2}(1,x)$, $E_{3}(1,x)$ and $E_{15}(1,x)$ are permutation polynomials of $\mathbb{F}_5$. Now we consider the last case where $n=94$. 

When $x=\frac{1}{4}$, i.e. $y=\frac{1}{2}$, we have 
$$E_{94}(1,\frac{1}{4})=E_{15}(1,-1)=\frac{94+1}{2^{15}}=0.$$ 
From the functional expression, we have $E_{15}(1,0)=1$. Let $x\in \mathbb{F}_5\setminus \{0,-1\}$. Then 

\[
\begin{split}
E_{94}(1,x) &= \frac{y^{95} - (1-y)^{95}}{2y-1}\cr
&= \frac{y^{-1} - (1-y)^{-1}}{2y-1}\cr
&= \frac{-1}{y(1-y)}\cr
&= -x^{-1}\cr
&= -x^3.
\end{split}
\]

Therefore, the polynomial $E_{94}(1,x)\in \mathbb{F}_5[x]$ is given by

\[
E_{94}(1,x) = \left\{
\begin{alignedat}{2}
  & 1& \quad & \text{if}\ x=0, \\
  & 0 & \quad & \text{if}\ x=-1, \\
  & -x^3 & \quad & \text{if}\ x\in \mathbb{F}_5\setminus \{0,-1\}.
\end{alignedat}
\right.
\]

Clearly, the monomial $-x^3$ permutes the elements of the set $x\in \mathbb{F}_5\setminus \{0,-1\}$. Thus, $E_{94}(1,x)$ is a PP of $\mathbb{F}_5$. This completes the proof. 
\end{proof}

\begin{thm}
If $n\equiv 2, 3, 170 \pmod{336}$, then $E_n(1,x)$ is a PP over $\mathbb{Z}_7$. 
\end{thm}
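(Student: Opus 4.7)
The plan mirrors the strategy of the preceding theorem for $\mathbb{Z}_5$. First I obtain the polynomials $E_2(1,x)=1-x$ and $E_3(1,x)=1-2x$ directly from the recurrence; both are linear with leading coefficient coprime to $7$, hence permutations of $\mathbb{F}_7$. This immediately covers the congruence classes $n\equiv 2,3\pmod{336}$.

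For $n=170$ I would evaluate $E_{170}(1,x)$ at each of the seven points of $\mathbb{F}_7$ and check that the resulting map is a bijection. The two distinguished inputs are handled first: the constant term of $E_n(1,x)$ is $1$ for every $n\ge 0$, so $E_{170}(1,0)=1$; and the formula $E_n(1,\tfrac14)=(n+1)/2^n$, applied at $\tfrac14=2$ in $\mathbb{F}_7$, gives $E_{170}(1,2)=171/2^{170}$, which collapses via $2^3\equiv 1\pmod 7$ to $3/4=6$. For the remaining inputs I invoke the functional identity $E_n(1,x)=(y^{n+1}-(1-y)^{n+1})/(2y-1)$ with $x=y(1-y)$ together with $y^{48}=1$ for $y\in\mathbb{F}_{49}^{\times}$, so that $y^{171}=y^{27}$ since $171\equiv 27\pmod{48}$. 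Equivalently, by Lemma~\ref{LJ2} one has $E_{170}(1,x)=E_{26}(1,x)$ for every $x\in\mathbb{F}_7\setminus\{\tfrac14\}$, so the task reduces to analysing $E_{26}(1,x)$ at the five points $x\in\{1,3,4,5,6\}$.

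I realise $\mathbb{F}_{49}$ as $\mathbb{F}_7(i)$ with $i^2=-1$ and split these five inputs by whether the discriminant $1-4x$ is a square. For $x=1$ and $x=5$, one has $1-4x\in(\mathbb{F}_7^{\times})^2$, so $y\in\mathbb{F}_7^{\times}$ and the computation reduces to exponentiation modulo $6$. For $x=3,4,6$ the discriminant is a non-square, so $y\in\mathbb{F}_{49}\setminus\mathbb{F}_7$; here I compute $y^{27}$ and $(1-y)^{27}$ by repeated squaring in $\mathbb{F}_7(i)$, using in particular that $(1-y)=y^7$ for conjugate pairs. After recording the seven output values I verify, by direct inspection, that they form a permutation of $\mathbb{F}_7$.

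The principal obstacle is the $\mathbb{F}_{49}$ arithmetic at the three inert points $x\in\{3,4,6\}$. In the preceding $\mathbb{Z}_5$ theorem the exponent $n+1=95\equiv -1\pmod{24}$ forced $E_{94}(1,x)$ to collapse to the monomial $-x^{-1}$, which made the permutation property transparent; here $171\equiv 27\pmod{48}$ admits no analogous cancellation and I do not expect $E_{170}(1,x)$ to reduce to a single low-degree monomial on $\mathbb{F}_7\setminus\{0,\tfrac14\}$. The verification is therefore a careful case-by-case calculation in $\mathbb{F}_7(i)$, whose only real delicacy is bookkeeping the order of each $y$ (which divides $48$) so that the reduction $y^{27}$ is computed correctly; everything else is routine.
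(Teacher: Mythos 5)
Your handling of $n\equiv 2,3$ and of the two special inputs $x=0$ and $x=\tfrac14=2$ is exactly the paper's. For $n\equiv 170$ your route genuinely diverges: you reduce to $E_{26}(1,x)$ on the five points $x\in\{1,3,4,5,6\}$ via Lemma~\ref{LJ2} and propose a pointwise evaluation in $\mathbb{F}_{49}=\mathbb{F}_7(i)$, split according to the quadratic character of $1-4x$ (your split of the five points into $\{1,5\}$ and $\{3,4,6\}$ is correct, as is the observation $1-y=y^{7}$ in the inert case). That is a legitimate finite verification and it would succeed. The paper, however, exploits precisely the cancellation you say you do not expect: writing $171\equiv-21\pmod{48}$ gives $E_{170}(1,x)=\bigl((1-y)^{21}-y^{21}\bigr)/\bigl((y(1-y))^{21}(2y-1)\bigr)$, and since $21=3\cdot7$ the characteristic-$7$ identities $(1-y)^{7}=1-y^{7}$ and $1-2y^{7}=(1-2y)^{7}$, combined with $a^{3}-b^{3}=(a-b)(a^{2}+ab+b^{2})$, collapse this to $-\,(1-4x)^{3}(1-x^{7})/x^{3}=3x^{5}+6x^{4}+6x^{3}+6x$ on $\mathbb{F}_7\setminus\{0,2\}$. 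The paper's final check is then the evaluation of an explicit quintic at five points of $\mathbb{F}_7$, with no extension-field arithmetic; your version buys simplicity of ideas at the cost of heavier bookkeeping in $\mathbb{F}_7(i)$, and your remark that $171\equiv 27\pmod{48}$ admits no useful reduction is not quite right.

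The one substantive shortfall is that the decisive computation is promised but never performed: you write that you would ``record the seven output values'' and ``verify by direct inspection,'' but no values at $x\in\{1,3,4,5,6\}$ appear, and the entire content of the case $n\equiv 170$ is that this finite check succeeds. To close the argument you need the table: $E_{170}(1,x)=E_{26}(1,x)$ takes the values $0,2,4,3,5$ at $x=1,3,4,5,6$ respectively, which together with $E_{170}(1,0)=1$ and $E_{170}(1,2)=6$ exhaust $\mathbb{F}_7$. With that table included, your argument is a complete and valid alternative to the paper's closed-form derivation.
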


\begin{proof}
It is straightforward from the recurrence relation that $$E_{2}(1,x)= 1 - x\,\,\,\,\text{and}\,\,\,\,E_{3}(1,x)= 1 - 2x,$$ and they are clearly PPs of $\mathbb{Z}_{7}$. We compute the polynomial $E_{170}(1,x)$. When $x=\frac{1}{4}$, i.e. $y=\frac{1}{2}$, we have 
$$E_{170}(1,\frac{1}{4})=E_{170}(1,2)=\frac{170+1}{2^{170}}=\frac{3}{4}=6.$$

Let $x=y(1-y)$. Then the the functional expression of $E_n(1,x)$ is given by 

$$\displaystyle{E_n(1,x)=E_n(1,y(1-y))=\frac{y^{n+1}-(1-y)^{n+1}}{2y-1}},$$

where $y\neq \frac{1}{2}$, i.e. $x\neq \frac{1}{4}$. Here, $y$ is in the extension field $\mathbb{F}_{5^2}$. From the functional expression, we have $E_{170}(1,0)=1$. Let $x\in \mathbb{F}_7\setminus \{0,-1\}$. Since $(2y-1)^2=1-4x$, we have

\[
\begin{split} 
E_{170}(1,x)&=\frac{y^{171}-(1-y)^{171}}{2y-1} \cr
&= \frac{y^{-21}-(1-y)^{-21}}{2y-1} \cr
&=\frac{(1-y)^{21}-y^{21}}{(y(1-y))^{21}\,(2y-1)}\cr
&=\frac{(1-y^7)^{3}-(y^7)^{3}}{(y(1-y))^{21}\,(2y-1)}\cr
&=-\,\frac{(2y-1)^6\,(1-y^7(1-y)^7)}{(y(1-y))^{21}}\cr
&= -\,\frac{(1-4x)^3\,(1-x^7)}{x^{21}}\cr
&=-\,\frac{(1-4x)^3\,(1-x^7)}{x^{3}}\cr
&=3x^5+6x^4+6x^3+6x.
\end{split}
\]

Thus, the polynomial $E_{170}(1,x)\in \mathbb{F}_7[x]$ is given by

\[
E_{170}(1,x) = \left\{
\begin{alignedat}{2}
  & 1 & \quad & \text{if}\ x=0, \\
  & 6 & \quad & \text{if}\ x=2,\\
  & 3x^5+6x^4+6x^3+6x & \quad & \text{if}\ x\in\mathbb{F}_7\backslash\{0,2\}.
\end{alignedat}
\right.
\]

Clearly, the polynomial $3x^5+6x^4+6x^3+6x$ permutes the elements of the set $x\in\mathbb{F}_7\backslash\{0,2\}$. Thus, $E_{170}(1,x)$ is a PP of $\mathbb{F}_7$. 

\end{proof}

\begin{conj}\label{conjJ51}
Let $p=5$. Then, $E_{n}(1,x)$ is a PP over $\mathbb{F}_p$ if and only if $n \equiv 2,3,15,94 \pmod{120}$.
\end{conj}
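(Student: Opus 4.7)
The plan is to establish the necessary direction; the sufficient direction is already covered by the preceding theorem. By Corollary~\ref{ll1}, for $p=5$ we have $E_{n_1}(1,x)=E_{n_2}(1,x)$ for all $x\in\mathbb{F}_5$ whenever $n_1\equiv n_2\pmod{p(p^2-1)}=120$. Hence the PP property of $E_n(1,x)$ over $\mathbb{F}_5$ depends only on $n$ modulo $120$, and it suffices to inspect $120$ residue classes.

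The first step is to reduce the bookkeeping. By Lemma~\ref{LJ2}, for each fixed $x\in\mathbb{F}_5\setminus\{4\}$ (noting $1/4\equiv 4\pmod 5$) the value $E_n(1,x)$ depends only on $n\pmod{24}$. I would use the recurrence $E_n(1,x)=E_{n-1}(1,x)-xE_{n-2}(1,x)$ to tabulate the four sequences $\{E_n(1,k)\}_{n=0}^{23}$ for $k\in\{0,1,2,3\}$, each of which will indeed have period dividing $24$. An easy induction on the recurrence gives $E_n(1,0)=1$ for all $n\geq 0$. For the remaining point $x=4=1/4$, use the closed expression $E_n(1,1/4)=(n+1)/2^n$; since $\mathrm{ord}_5(2)=4$ and $n+1$ has period $5$, this is a sequence in $\mathbb{F}_5$ of period $\mathrm{lcm}(5,4)=20$, so a single table of length $20$ records all its values.

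With these ingredients, the test for each $n\in\{0,1,\ldots,119\}$ reduces to assembling the $5$-tuple $(E_n(1,0),E_n(1,1),E_n(1,2),E_n(1,3),E_n(1,4))$ from the $(n\bmod 24,\,n\bmod 20)$ data and checking whether it is a permutation of $\{0,1,2,3,4\}$. A cheap filter eliminates most cases: since $E_n(1,0)=1$, any residue class for which $E_n(1,k)=1$ for some $k\in\{1,2,3,4\}$ is instantly ruled out. For the few classes surviving this filter, a direct inspection suffices. The expectation, confirming the computational evidence already noted by the authors, is that exactly the residues $n\equiv 2,3,15,94\pmod{120}$ produce permutations; for instance, the tuples arising are $(1,0,4,3,2)$, $(1,4,2,0,3)$, $(1,4,3,0,2)$, and $(1,4,2,3,0)$, matching the closed-form expressions $E_2(1,x)=1-x$, $E_3(1,x)=1-2x$, $E_{15}(1,x)=-D_2(1,x)D_4(1,x)$, and $E_{94}(1,x)=-x^3$ on $\mathbb{F}_5\setminus\{0,-1\}$ used in the sufficiency proof.

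The main obstacle is simply the absence of a slick structural argument: the conjecture is genuinely a finite check, and the values $\{2,3,15,94\}$ do not fall into an obvious algebraic pattern. The real work lies in carrying out the $120$-row enumeration cleanly. Organising the case analysis by the pair $(n\bmod 24,n\bmod 20)$, together with the $E_n(1,0)=1$ filter, keeps the bookkeeping small and transparent; it would, however, be desirable to find a more conceptual proof by exploiting the functional expression $E_n(1,y(1-y))=(y^{n+1}-(1-y)^{n+1})/(2y-1)$ on $\mathbb{F}_{25}$ to identify, for each ruled-out residue class, an explicit collision $E_n(1,x_1)=E_n(1,x_2)$ with $x_1\neq x_2$.
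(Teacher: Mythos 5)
Be aware that the paper does not prove this statement at all: it is stated as a conjecture, with only the ``if'' direction established in the preceding theorem (via the explicit forms $E_2(1,x)=1-x$, $E_3(1,x)=1-2x$, $E_{15}(1,x)$ and $E_{94}(1,x)$), and the ``only if'' direction supported only by the authors' numerical evidence. So there is no paper proof to compare against; your proposal, if executed, would actually upgrade the conjecture to a theorem, and its logical skeleton is correct. Corollary~\ref{ll1} does make the PP property of $E_n(1,x)$ over $\mathbb{F}_5$ a function of $n\bmod 120$; Lemma~\ref{LJ2} gives period $24$ at each $x\neq 4$, the closed form $E_n(1,\tfrac14)=(n+1)/2^n$ gives period $20$ at $x=4$, and $E_n(1,0)=1$ for all $n$ is immediate from the recurrence. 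I checked your reduction: the sequences are $E_n(1,1)$ of period $6$ with values $1,1,0,4,4,0$, $E_n(1,3)$ of period $4$ with values $1,1,3,0$, $E_n(1,2)$ of period $24$, and $E_n(1,4)$ of period $20$ (the Fibonacci recursion mod $5$, Pisano period $20$); the filter $E_n(1,1)\neq 1\neq E_n(1,3)$ and $E_n(1,1)\neq E_n(1,3)$ already cuts the candidates to $n\equiv 2,3,10\pmod{12}$, and matching the remaining two coordinates against $\{0,2,3,4\}\setminus\{E_n(1,1),E_n(1,3)\}$ leaves exactly $n\equiv 2,3,15,94\pmod{120}$, with the four tuples you list (all of which are correct).

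The one genuine shortfall is that, as written, your argument is a plan rather than a proof: the elimination of the roughly $116$ losing residue classes is asserted (``a direct inspection suffices'') but not exhibited, and the needed value tables for $E_n(1,2)\bmod 24$ and $E_n(1,4)\bmod 20$ over $\mathbb{F}_5$ appear nowhere in the paper (Appendix~\ref{AppB} tabulates $D_n$, not $E_n$, for $p=5$). To make this publishable you would need to record those two short tables and the case split by $n\bmod 12$ sketched above; once that is done the argument is complete and is essentially the same finite-verification style the authors themselves use for Theorems~\ref{T5} and \ref{T7} in the first-kind setting. Your closing wish for a structural proof via the functional expression on $\mathbb{F}_{25}$ is reasonable but not needed for $p=5$; it would, however, be the only viable route for the general conjectures (e.g.\ Conjecture~\ref{conjJ52}), where no finite check is available.
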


\begin{conj}
Let $p=7$. Then, $E_n(1,x)$ is a PP over $\mathbb{F}_p$ if and only If $n\equiv 2, 3, 170 \pmod{336}$. 
\end{conj}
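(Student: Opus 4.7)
The plan is to reduce the claim to a finite case analysis using Corollary~\ref{ll1}, which asserts $E_{n_1}(1,x)=E_{n_2}(1,x)$ for all $x\in\mathbb{F}_7$ whenever $n_1\equiv n_2\pmod{p(p^2-1)}=336$. This means it suffices to determine which $n\in\{1,\dots,336\}$ give PPs of $\mathbb{F}_7$. The sufficient direction for $n=2,3,170$ is precisely the preceding theorem, so only the converse remains.

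For the converse, I would partition $\{1,\dots,336\}$ by residue class modulo $48=p^2-1$. By Lemma~\ref{LJ2}, whenever $n_1\equiv n_2\pmod{48}$ we have $E_{n_1}(1,x)=E_{n_2}(1,x)$ for every $x\in\mathbb{F}_7\setminus\{2\}$ (since $1/4=2$ in $\mathbb{F}_7$). So for each $m\in\{0,1,\dots,47\}$ the restricted map $f_m\colon\mathbb{F}_7\setminus\{2\}\to\mathbb{F}_7$, $f_m(x):=E_m(1,x)$, is a single fixed function, computable from the functional expression $E_m(1,y(1-y))=(y^{m+1}-(1-y)^{m+1})/(2y-1)$ with $y\in\mathbb{F}_{49}^{\times}$ satisfying $y^{48}=1$. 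Two cases arise: (a) $f_m$ is not injective on the six-element domain $\mathbb{F}_7\setminus\{2\}$, in which case no $n\equiv m\pmod{48}$ can give a PP; or (b) $f_m$ is injective with image $\mathbb{F}_7\setminus\{c_m\}$ for a unique $c_m\in\mathbb{F}_7$, in which case $E_n(1,x)$ is a PP if and only if $E_n(1,2)=c_m$, i.e.\ $(n+1)/2^n\equiv c_m\pmod 7$.

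In case (b) there is exactly one $n$ per class. As $n$ ranges over the seven members $\{m,m+48,\dots,m+288\}$ of the class lying in $\{1,\dots,336\}$, $2^n\equiv 2^m\pmod 7$ is constant because $48\equiv 0\pmod 3$, whereas $n+1\pmod 7$ hits every residue exactly once because $48\equiv -1\pmod 7$. Thus $(n+1)/2^n$ takes all seven residues modulo $7$ as $n$ varies over the class, pinning down a unique PP candidate.

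The main obstacle is the case analysis itself: one must show that $f_m$ is injective on $\mathbb{F}_7\setminus\{2\}$ precisely for $m\in\{2,3,26\}$ (the reductions mod $48$ of $2,3,170$), and that the unique matching $n$ in each of those classes is $2,3,170$ respectively. This is the substantive work of the proof. For each of the remaining $45$ classes one computes $f_m$ from the functional expression with the exponent $m+1$ reduced modulo $48$, handling $x\in\{0,2\}$ separately exactly as in the explicit calculation of $E_{170}$ done in the preceding theorem, and then exhibits a two-value collision to rule out injectivity. A helpful tactic is to bundle classes $m$ and $46-m$ using $y^{48-k}=y^{-k}$ in $\mathbb{F}_{49}^{\times}$ (which, after multiplying numerator and denominator by $(y(1-y))^k=x^k$, relates $f_m$ to a Dickson-type expression in $x$), potentially collapsing many of the cases. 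Absent such a shortcut, the remaining classes form an unavoidable but bounded tabulation of $f_m$ on the six-point domain, and reading off the (non-)injectivity completes the proof.
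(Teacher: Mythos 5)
This statement is left as a conjecture in the paper: the authors prove only the sufficiency direction (the preceding theorem showing $E_2$, $E_3$, $E_{170}$ are PPs of $\mathbb{F}_7$) and support necessity by computer search, so there is no paper proof to compare against. Your reduction framework is correct and genuinely useful: Corollary~\ref{ll1} does cut the problem to $1\le n\le 336$; Lemma~\ref{LJ2} does make the restriction of $E_n(1,\cdot)$ to $\mathbb{F}_7\setminus\{2\}$ depend only on $n\bmod 48$; and your observation that within a fixed class mod $48$ the value $E_n(1,2)=(n+1)/2^n$ runs through all seven residues (because $48\equiv 0\pmod 3$ and $48\equiv -1\pmod 7$) correctly pins down at most one PP per injective class. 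For $m\in\{2,3,26\}$ the injectivity and the matching values do check out ($c_2=6$, $c_3=4$, and $(n+1)/2^n$ gives $6$ at $n=2$, $4$ at $n=3$, $6$ at $n=170$, consistent with the paper's computation $E_{170}(1,2)=6$), and injectivity of these three restrictions already follows from the proven sufficiency.

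The gap is that the necessity direction --- the actual content of the conjecture --- is not established: you reduce it to showing that for each of the other $45$ residue classes $m\bmod 48$ the restricted map $f_m$ on the six-element set $\mathbb{F}_7\setminus\{2\}$ fails to be injective, and then you explicitly defer that tabulation ("the remaining classes form an unavoidable but bounded tabulation"). No collision witnesses are exhibited, and the suggested shortcut pairing $m$ with $46-m$ is only sketched, so nothing in the write-up rules out the possibility that some other class is injective, which by your own case (b) would produce an additional PP and falsify the statement. As written this is a sound finite-verification plan (and a cleaner one than brute force over all $336$ indices), but it is a plan rather than a proof; to close the conjecture you must actually produce the table of $f_m$ values, or explicit collisions, for the remaining classes.
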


\begin{conj}\label{conjJ52}
Let $p>7$. Then, $E_n(1,x)$ is a PP of $\mathbb{F}_p$ if and only if $n\equiv 2, 3 \pmod{p(p^2-1)}$. 
\end{conj}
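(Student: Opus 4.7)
The sufficient direction is immediate. When $n\equiv 2\pmod{p(p^2-1)}$, Corollary~\ref{ll1} gives $E_n(1,x)=E_2(1,x)=1-x$ for all $x\in\mathbb{F}_p$, and when $n\equiv 3\pmod{p(p^2-1)}$ we get $E_n(1,x)=E_3(1,x)=1-2x$; both are linear with nonzero slope, hence PPs of $\mathbb{F}_p$. So the entire burden lies in the necessary direction, and the following plan targets that.

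The plan is to reduce to a polynomial in a single, simpler variable and analyze it. By Corollary~\ref{ll1} it suffices to consider $0\leq n<p(p^2-1)$. Substitute $s=2y-1$ and $u=1-4x=s^2$ in the functional expression $E_n(1,y(1-y))=\tfrac{y^{n+1}-(1-y)^{n+1}}{2y-1}$. The resulting expression $\tfrac{(1+s)^{n+1}-(1-s)^{n+1}}{2^{n+1}s}$ is an even polynomial in $s$, so writes as a polynomial
\[
G_n(u)=\frac{1}{2^n}\sum_{j=0}^{\lfloor n/2\rfloor}\binom{n+1}{2j+1}u^j,
\]
and $E_n(1,x)=G_n(1-4x)$. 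Since $x\mapsto 1-4x$ is an affine bijection of $\mathbb{F}_p$, the claim reduces to showing $G_n$ is a PP of $\mathbb{F}_p$ only when $n\equiv 2,3\pmod{p(p^2-1)}$, in which case $G_2(u)=(3+u)/4$ and $G_3(u)=(1+u)/2$ are linear.

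To eliminate the remaining residue classes, I would partition $\mathbb{F}_p^\times=Q\sqcup N$ into nonzero squares and nonsquares and control $G_n$ on each piece separately. For $u=s^2\in Q$ we have $s\in\mathbb{F}_p^\times$; for $u=s^2\in N$ we have $s\in\mathbb{F}_{p^2}\setminus\mathbb{F}_p$ with $s^p=-s$, so $(1\pm s)^p=1\mp s$. Writing $\alpha=1+s$ and $\beta=1-s$, injectivity of $G_n$ demands that for two pairs $(\alpha_1,\beta_1)$ and $(\alpha_2,\beta_2)$ arising from distinct $u$'s one has $\tfrac{\alpha_1^{n+1}-\beta_1^{n+1}}{\alpha_1-\beta_1}\neq \tfrac{\alpha_2^{n+1}-\beta_2^{n+1}}{\alpha_2-\beta_2}$. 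Splitting $n$ by its residue $n\equiv r\pmod p$ (with $r\in\{0,\ldots,p-1\}$) and by $n\pmod{p^2-1}$ via the Chinese Remainder Theorem, one can use Corollaries~\ref{CCJJ}--\ref{JJJC3} together with Fermat reductions $\alpha^{p-1}=1$ on $Q$ and $\alpha^{p+1}\in\mathbb{F}_p$ on $N$ to collapse $G_n$ on each piece into a low-degree rational function of $u$. For $n\not\equiv 2,3$ one then produces an explicit collision, either within $Q$, within $N$, or across the two.

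The main obstacle is the exhaustiveness of the case analysis. The analogous first-kind conjecture (Conjecture~\ref{CCC1}) has been settled by the authors only for $p=5,7$ (Sections~\ref{S5} and~\ref{S6}), and those proofs rely on careful case-by-case handling of all residues modulo $p^2-1$. I expect the second-kind statement to present similar difficulties in full generality: although the reduction to $G_n(u)$ is a genuine simplification, handling the interaction between $n\pmod p$ (controlling the behavior at $u=0$, i.e.\ $x=1/4$) and $n\pmod{p^2-1}$ (controlling $G_n$ elsewhere) uniformly in $p>7$ will likely require either a new structural idea---perhaps a Hou--Ly style necessary condition adapted to $E_n$, or a Hermite-criterion computation of the reduction of $G_n(u)^t$ modulo $u^p-u$---or else the kind of prime-by-prime verification that currently limits the first-kind progress to small $p$.
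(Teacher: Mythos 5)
The statement you were asked about is not a theorem in the paper: it is Conjecture~\ref{conjJ52}, and the paper itself proves only the ``if'' direction (exactly as you do, via $E_2(1,x)=1-x$, $E_3(1,x)=1-2x$ and the periodicity in Corollary~\ref{ll1}), leaving the ``only if'' direction open and supported only by computer search. Your sufficiency argument is correct and is the same as the paper's. Your normalization $E_n(1,x)=G_n(1-4x)$ with $G_n(u)=2^{-n}\sum_j\binom{n+1}{2j+1}u^j$ is also algebraically sound (it is consistent with $E_n(1,\tfrac14)=(n+1)/2^n$ at $u=0$ and recovers $G_2(u)=(3+u)/4$, $G_3(u)=(1+u)/2$), and the idea of splitting $\mathbb{F}_p^\times$ into squares and nonsquares and exploiting $s^p=\pm s$ is the natural line of attack.

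However, for the necessary direction what you have written is a plan, not a proof, and the gap is exactly where the difficulty of the conjecture lives: you never actually produce, for an arbitrary $n\not\equiv 2,3\pmod{p(p^2-1)}$ and arbitrary $p>7$, a collision $G_n(u_1)=G_n(u_2)$ with $u_1\neq u_2$. The Fermat-type reductions ($\alpha^{p-1}=1$ on squares, $\alpha^{p+1}\in\mathbb{F}_p$ on nonsquares) and Corollaries~\ref{CCJJ}--\ref{JJJC3} only let you read off the residues of $n$ modulo $p$ and modulo $p^2-1$ separately; they do not by themselves force a collision, and there are on the order of $p(p^2-1)$ residue classes to eliminate uniformly in $p$. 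You acknowledge this obstacle yourself, and it is telling that even for the first-kind analogue (Conjecture~\ref{CCC1}) the paper only settles the cases $p=5$ and $p=7$ by exhaustive residue-by-residue computation (Theorems~\ref{T5} and~\ref{T7}). So your proposal should be read as: a correct proof of sufficiency matching the paper, plus a reasonable but incomplete strategy for a statement that remains conjectural; as it stands it does not establish the ``only if'' half, and no argument in the paper does either.
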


\begin{conj}\label{conjJ53}
Let $p\geq 5$. Then, $E_n(1,x)$ is a CPP of $\mathbb{F}_p$ if and only if $n\equiv 3 \pmod{p(p^2-1)}$. 
\end{conj}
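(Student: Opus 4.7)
The plan is to split the conjecture into the standard two directions, reduce the ``only if'' direction to the PP conjectures stated earlier in this section, and verify the ``if'' direction together with the exceptional PP indices by direct computation.

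First I would dispose of the ``if'' direction. By Corollary~\ref{ll1}, if $n\equiv 3\pmod{p(p^2-1)}$ then $E_n(1,x)=E_3(1,x)$ for every $x\in\mathbb{F}_p$. The recurrence gives $E_3(1,x)=1-2x$, which is a linear polynomial with nonzero slope for $p\geq 5$, so it is a PP. Moreover $E_3(1,x)+x=1-x$ is also a linear PP. Hence $E_n(1,x)$ is a CPP of $\mathbb{F}_p$, and this case is immediate.

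For the ``only if'' direction I would work conditionally on Conjectures~\ref{conjJ51}--\ref{conjJ52}, which already narrow the candidate residues. If $E_n(1,x)$ is a CPP then in particular $E_n(1,x)$ is a PP, so the candidate values of $n\pmod{p(p^2-1)}$ are: $\{2,3,15,94\}$ if $p=5$, $\{2,3,170\}$ if $p=7$, and $\{2,3\}$ if $p>7$. The task is to eliminate every value except $3$ by exhibiting a collision in $E_n(1,x)+x$. The universal case is $n\equiv 2$: since $E_2(1,x)=1-x$, we have $E_2(1,x)+x\equiv 1$, a constant, hence not a PP; this rules out $n\equiv 2$ for every $p\geq 5$ at once.

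It remains to handle the two sporadic residues in small characteristic. For $p=5$ and $n=15$, the functional form computed earlier in this section gives $E_{15}(1,x)=4x^3+x-1$ on $\mathbb{F}_5\setminus\{0,-1\}$ together with $E_{15}(1,0)=1$ and $E_{15}(1,-1)=2$; tabulating $E_{15}(1,x)+x$ produces a repeated value, so it is not a PP. For $n=94$, using $E_{94}(1,x)=-x^3$ on $\mathbb{F}_5\setminus\{0,-1\}$ with $E_{94}(1,0)=1$, $E_{94}(1,-1)=0$, one likewise finds a collision in $-x^3+x$ (for instance $0$ and $3$ both map to $1$). For $p=7$ and $n=170$, the explicit form $E_{170}(1,x)=3x^5+6x^4+6x^3+6x$ on $\mathbb{F}_7\setminus\{0,2\}$ with $E_{170}(1,0)=1$, $E_{170}(1,2)=6$ gives $E_{170}(1,x)+x=3x^5+6x^4+6x^3$ on the complement, and a direct evaluation again yields a repeated image. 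Together with the ``if'' direction this finishes the proof modulo Conjectures~\ref{conjJ51}--\ref{conjJ52}.

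The main obstacle is, of course, that this argument is only as strong as the PP classification it invokes: an unconditional proof would require first establishing Conjectures~\ref{conjJ51}--\ref{conjJ52}, which is the genuinely hard content. In particular, ruling out all $n\not\equiv 2,3\pmod{p(p^2-1)}$ for general $p>7$ demands the Hermite/Wan--style machinery for $E_n(1,x)$, and the residual low-characteristic residues $\{15,94\}$ for $p=5$ and $\{170\}$ for $p=7$ arise from coincidental simplifications of the functional expression $(y^{n+1}-(1-y)^{n+1})/(2y-1)$ under small exponent reductions mod $p^2-1$, so any uniform approach must account for them separately. Once that PP input is granted, however, the CPP reduction above is essentially a finite check.
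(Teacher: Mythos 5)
Your argument reproduces exactly the reduction the paper itself gives in the remark following Conjectures~\ref{conjJ51}--\ref{conjJ53}: the ``if'' direction via $E_3(1,x)=1-2x$ and $E_3(1,x)+x=1-x$ being linear PPs, and the elimination of the remaining PP candidates $n\equiv 2$ (all $p$), $15,94$ (for $p=5$) and $170$ (for $p=7$) by exhibiting collisions in $E_n(1,x)+x$, so that the CPP statement follows from Conjectures~\ref{conjJ51}, \ref{conjJ52} and the $p=7$ analogue; your explicit collision checks are correct. Be aware, though, that the statement is labelled a conjecture in the paper precisely because those PP classifications are unproven, so, exactly as in the paper, what you have is a correct conditional reduction rather than an unconditional proof.
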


\begin{rmk}
We have shown that $E_2(1,x)$ and $E_3(1,x)$ are PPs of $\mathbb{F}_p$ for any $p\geq 3$. Clearly, $E_2(1,x)+x$ is not a PP of $\mathbb{F}_p$ for any $p$. Note that $E_{15}(1,x)+x$ is not a PP of $\mathbb{F}_5$ as 
$$E_2(1,0)=E_2(1,-1)-1,$$

and $E_{95}(1,x)+x$ is not a PP of $\mathbb{F}_5$ as 
$$E_{95}(1,-1)-1=E_2(1,2)+2.$$

Also, $E_{170}(1,x)+x$ is not a PP of $\mathbb{F}_7$ as 
$$E_2(1,0)=E_2(1,1)+1.$$

Since the polynomial  $E_3(1,x)+x$ is a PP of $\mathbb{F}_p$ for any $p$, proving Conjectures~\ref{conjJ51},\ref{conjJ52} and \ref{conjJ53} proves Conjecture~\ref{conjJ53}.
\end{rmk} 

\begin{rmk}
Cycle type of the permutation polynomial $E_3(1,x)$ for $p>3$ is described in Theorem~\ref{TT1}.
\end{rmk}

\begin{rmk}
Cycle type of the permutation polynomial $E_3(1,x)$ for $p=3$ is described in Theorem~\ref{TJ7T1}.
\end{rmk}

\begin{rmk}
Cycle type of the permutation polynomial $E_2(1,x)$ for $p\geq 3$ is described in Theorem~\ref{TJ7T2}.
\end{rmk}


\section{Proof of a special case of a conjecture on reversed Dickson PPs ($p=5$)}\label{S5}

In this subsection, we confirm the Conjecture~\ref{CCC1} for $p=5$. In \cite{HMSY}, the authors proved that the values in the statement of the following theorem are sufficient for the $D_n(1,x)$ to be a permutation polynomial of $\mathbb{F}_p$. To confirm the conjecture, we only need to prove that those values are also necessary.

\begin{thm}\label{T5} 
$D_{n}(1,x)$ is a PP over $\mathbb{Z}_5$ if and only if $n = 2,3,6,10,15 \pmod{24}$.
\end{thm}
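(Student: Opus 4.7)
\medskip

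\noindent\textbf{Proof proposal.}

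The sufficiency direction is already contained in \cite{HMSY}, so the task is to prove necessity: for $n\in\{1,2,\ldots,24\}$, if $n\not\equiv 2,3,6,10,15\pmod{24}$ then $D_n(1,x)$ fails to permute $\mathbb{F}_5$. The plan is to combine the cyclotomic coset reduction from Lemma~\ref{LJ1} with a small amount of direct computation using the recurrence
\[
D_0(1,x)=2,\quad D_1(1,x)=1,\quad D_n(1,x)=D_{n-1}(1,x)-x\,D_{n-2}(1,x).
\]

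First, by Lemma~\ref{LJ1}(ii) we have $D_n(1,x)=D_{n+24}(1,x)$ on $\mathbb{F}_5$, so it suffices to consider $1\le n\le 24$. Next, by Lemma~\ref{LJ1}(iii), permutation behaviour is constant on each $5$-cyclotomic coset modulo $24$. I would therefore list the cosets explicitly, obtaining
\[
\{0\},\ \{1,5\},\ \{2,10\},\ \{3,15\},\ \{4,20\},\ \{6\},\ \{7,11\},\ \{8,16\},\ \{9,21\},\ \{12\},\ \{13,17\},\ \{14,22\},\ \{18\}.
\]
The claimed permutation indices $\{2,3,6,10,15\}$ are exactly the union of the three cosets $\{2,10\}$, $\{3,15\}$, and $\{6\}$. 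Necessity then reduces to checking that for each of the remaining ten cosets, represented by $n\in\{0,1,4,7,8,9,12,13,14,18\}$, the polynomial $D_n(1,x)$ is not a PP of $\mathbb{F}_5$.

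The bulk of the proof is a case-by-case verification. For $n=0$ and $n=1$ the polynomial is constant, so trivially non-PP. For the remaining eight representatives, I would iterate the recurrence for each fixed $x\in\mathbb{F}_5$ (noting $D_n(1,0)=1$ for $n\ge 1$ automatically) and tabulate the values $D_n(1,0),D_n(1,1),D_n(1,2),D_n(1,3),D_n(1,4)$, then exhibit a collision in each row. For instance, in the $n=4$ case one finds $D_4(1,x)=1-3x+x^2$, which takes the same value at two distinct inputs in $\mathbb{F}_5$; in each remaining case a similar explicit pair $a\ne b$ with $D_n(1,a)=D_n(1,b)$ will be produced.

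The main obstacle is simply bookkeeping: ensuring that the cyclotomic-coset enumeration is exhaustive and correct, and that the recurrence tables are computed without arithmetic slips. There is no deep conceptual step beyond the two reductions provided by Lemma~\ref{LJ1}. Once the coset structure is pinned down, the problem collapses to finitely many small finite-field computations, and the necessity half of the equivalence follows directly.
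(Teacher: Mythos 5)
Your overall strategy (reduce to $1\le n\le 24$ via Lemma~\ref{LJ1}(ii), invoke Lemma~\ref{LJ1}(iii) to work coset-by-coset, then exhibit a collision for each non-permutation representative) is sound and is essentially the same finite verification the paper performs, with the coset reduction as a mild shortcut. However, your case analysis has a genuine gap: the $5$-cyclotomic cosets modulo $24$ number fourteen, not thirteen. You omitted the coset $\{19,23\}$ (note $5\cdot 19\equiv 95\equiv 23\pmod{24}$), so your ``remaining'' list $\{0,1,4,7,8,9,12,13,14,18\}$ should contain eleven representatives, and the classes $n\equiv 19,23\pmod{24}$ are never ruled out. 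In a proof whose entire content is an exhaustive finite check, an incomplete enumeration is fatal as written. The repair is immediate: $19\equiv 1\pmod 6$, so $D_{19}(1,1)=1=D_{19}(1,0)$, giving the required collision (this is exactly how the paper disposes of the classes $n\equiv 1,5\pmod 6$ at once, using the value sequences $D_n(1,x)$ for each fixed $x$ rather than coset representatives).

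Two smaller points. First, the residue class $0\pmod{24}$ is not settled by saying the polynomial is constant: that covers only $n=0$ itself, while for $n\ge 1$ the representative is $n=24$, and $D_{24}(1,x)$ is not constant; one needs an explicit collision such as $D_{24}(1,1)=2=D_{24}(1,2)$, which is precisely the extra check the paper makes since the periodicity of Lemma~\ref{LJ1}(ii) only applies to positive indices. Second, your sample computation is off: $D_4(1,x)=1-4x+2x^2$ (the polynomial $1-3x$ is $D_3$), though the conclusion stands since $D_4(1,0)=D_4(1,2)=1$ in $\mathbb{F}_5$. With the coset $\{19,23\}$ restored, the $n=24$ case treated honestly, and the tabulations actually carried out (as in Table 1 of Appendix~\ref{AppB}), your argument goes through and matches the paper's in substance.
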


\begin{proof} 

Assume that $n\not\equiv 2,3,6,10,15\pmod{24}.$ From the sequence in Table 1 in Appendix~\ref{AppB}, we have 

\begin{center}
\[
D_{n}(1,0) = \left\{
\begin{alignedat}{2}
  & 2 & \quad & \text{if}\ n = 0,\\
  & 1 & \quad & \text{if}\ n \geq 1.
\end{alignedat}
\right.
\] 
\newline
\[
D_{n}(1,1) = \left\{
\begin{alignedat}{2}
  & 1 & \quad & \text{if}\ n \equiv 1, 5 \pmod{6},\\
  & 2 & \quad & \text{if}\ n \equiv 0  \pmod{6},\\
  & 3 & \quad & \text{if}\ n \equiv 3  \pmod{6},\\
  & 4 & \quad & \text{if}\ n \equiv 2, 4 \pmod{6}.
\end{alignedat}
\right.
\]
\newline
\[
D_{n}(1,2) = \left\{
\begin{alignedat}{2}
  & 0 & \quad & \text{if}\ n \equiv 3,9,15,21 \pmod{24},\\
  & 1 & \quad & \text{if}\ n \equiv 1,4,5,18,20 \pmod{24},\\
  & 2 & \quad & \text{if}\ n \equiv 0,2,7,10,11 \pmod{24},\\
  & 3 & \quad & \text{if}\ n \equiv 12,14,19,22,23 \pmod{24},\\
  & 4 & \quad & \text{if}\ n \equiv 6,8,13,16,17 \pmod{24}.
\end{alignedat}
\right.
\]
\newline
\[
D_{n}(1,-2) = \left\{
\begin{alignedat}{2}
  & 0 & \quad & \text{if}\ n \equiv 2 \pmod{4},\\
  & 1 & \quad & \text{if}\ n \equiv 1  \pmod{4},\\
  & 2 & \quad & \text{if}\ n \equiv 0,3  \pmod{4}.
\end{alignedat}
\right.
\]
\newline
\[
D_{n}(1,-1) = \left\{
\begin{alignedat}{2}
  & 1 & \quad & \text{if}\ n \equiv 1 \pmod{4},\\
  & 2 & \quad & \text{if}\ n \equiv 0  \pmod{4},\\
  & 3 & \quad & \text{if}\ n \equiv 2  \pmod{4},\\
  & 4 & \quad & \text{if}\ n \equiv 3  \pmod{4}.
\end{alignedat}
\right.
\]
\end{center}

Clearly, $D_0(1,x)$ is not a PP of $\mathbb{Z}_5$. Therefore, for the rest of the proof, assume that $n\geq 1$. 

Since $D_{n}(1,0) = 1$ for all $n \geq 1$, $D_n(1,x)$ is clearly not a PP over $\mathbb{Z}_5$ whenever $n\equiv 1,5 \pmod{6}$, $n\equiv 1 \pmod{4}$, and $n\equiv 4, 18, 20 \pmod{24}$. If we exclude the $n$ values less than or equal to $23$ that satisfy these congruences, we have $n = 8,12,14,16,22$.

Since $D_8(1,x), D_{12}(1,x)$, and $D_{16}(1,x)$ map $3$ and $4$ to $2$, and $D_{14}(1,x)$ and $D_{22}(1,x)$ map $2$ and $4$ to $3$, $D_n(1,x)$ is not a PP of $\mathbb{Z}_5$ whenever $n = 8,12,14,16,22$. 

Also, note that $D_{24}(1,x)$ is not a PP since $D_{24}(1,1)=2=D_{24}(1,1)$.

Since $D_{n_1}(1,x)=D_{n_2}(1,x)$ for all $x\in \mathbb{F}_p$ whenever $n_1,n_2>0$ are integers such that $n_1\equiv n_2\pmod{p^2-1}$, we have shown that if $n\not\equiv 2,3,6,10,15\pmod{24}$, then $D_n(1,x)$ is not a PP of $\mathbb{Z}_5$. This completes the proof. 
 
 \end{proof}

\section{Proof of a special case of a conjecture on reversed Dickson PPs ($p=7$)}\label{S6}

In this subsection, we confirm the Conjecture~\ref{CCC1} for $p=7$. In \cite{HMSY}, the authors proved that the values in the statement of the following theorem are sufficient for the $D_n(1,x)$ to be a permutation polynomial of $\mathbb{F}_p$. To confirm the conjecture, we only need to prove that those values are also necessary. 

\begin{thm}\label{T7} 
$D_{n}(1,x)$ is a PP over $\mathbb{Z}_7$ if and only if $n = 2,3,9,14,15 ,21\pmod{48}$.
\end{thm}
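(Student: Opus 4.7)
The sufficiency of the listed residues is already established in \cite{HMSY}, so the task is to prove necessity: for every $n\in\{1,\dots,48\}$ with $n\not\equiv 2,3,9,14,15,21\pmod{48}$, the polynomial $D_n(1,x)$ is not a PP of $\mathbb{F}_7$. The plan is to imitate the strategy of the $p=5$ case in Theorem~\ref{T5}: compute the periodic sequences $\{D_n(1,c)\pmod 7\}$ for each $c\in\mathbb{F}_7=\{0,\pm 1,\pm 2,\pm 3\}$, then for each "bad" residue class produce two distinct inputs $c_1,c_2\in\mathbb{F}_7$ on which $D_n(1,\cdot)$ collides.

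The first step is a significant reduction of bookkeeping. By Lemma~\ref{LJ1}(ii), $D_n(1,x)$ as a function on $\mathbb{F}_7$ depends only on $n\pmod{48}$, so only $48$ residues need to be examined. By Lemma~\ref{LJ1}(iii), the PP property is constant on each $7$-cyclotomic coset modulo $48$. Since $7^2\equiv 1\pmod{48}$, every such coset has size $1$ or $2$; explicitly, there are six singletons $\{0\},\{8\},\{16\},\{24\},\{32\},\{40\}$ (the multiples of $8$, fixed because $6n\equiv 0\pmod{48}$) and $21$ pairs $\{n,7n\bmod 48\}$. Three of the pairs, $\{2,14\}$, $\{3,21\}$, $\{9,15\}$, correspond to the good residues, leaving $6+18=24$ bad cosets. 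A single representative from each will be checked.

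Next I would tabulate, using the recurrence $D_n(1,x)=D_{n-1}(1,x)-xD_{n-2}(1,x)$, the sequences $\{D_n(1,c)\pmod 7\}_{n\ge 0}$ for $c=0,\pm 1,\pm 2,\pm 3$, together with their minimal periods (which will divide $48$ and may be substantially smaller for some $c$, e.g.\ $D_n(1,0)=1$ for all $n\ge 1$, and by Lemma~\ref{LJ1}(i) $D_{n+6}(1,c)$ relates to $D_n(1,c)^7=D_n(1,c)$). These tables play the role of the five tables displayed in the proof of Theorem~\ref{T5} and should be stated as a preliminary lemma (or placed in Appendix~\ref{AppB}). The key observation $D_n(1,0)\equiv 1$ then immediately kills every residue $n\ge 1$ for which $D_n(1,c)=1$ for at least one nonzero $c$, which will dispatch the bulk of the $24$ bad cosets in one sweep.

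The remaining cases, where the six values $D_n(1,\pm 1),D_n(1,\pm 2),D_n(1,\pm 3)$ all avoid $1$, have to be handled by exhibiting a direct collision between two nonzero inputs; this is exactly how $n=8,12,14,16,22,24$ are handled in the $p=5$ proof. I expect a small number of such leftover cosets (e.g.\ certain cosets among the singletons $\{0\},\{8\},\{16\},\{24\},\{32\},\{40\}$ and a handful of pairs) and the main obstacle of the proof is purely the care required in the bookkeeping: keeping the $24$ bad cosets organized, reading off values from the five/six periodic tables, and verifying a collision in every case without missing a residue. No new conceptual ingredient is needed beyond the tabulation of the sequences $D_n(1,c)\pmod 7$; once those tables are in place, the elimination is a finite and mechanical check.
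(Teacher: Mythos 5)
Your proposal follows essentially the same route as the paper's proof: tabulate the periodic sequences $D_n(1,c)\pmod 7$ for all $c\in\mathbb{F}_7$, use $D_n(1,0)=1$ for $n\geq 1$ to eliminate most bad residues at once, and dispose of the remaining residues (the paper's leftover list is $n=6,12,18,20,24,26,30,36,38,42,44$, plus $n=48$) by exhibiting explicit collisions between nonzero inputs. Your additional reduction to $7$-cyclotomic cosets modulo $48$ via Lemma~\ref{LJ1}(iii) is correct ($7^2\equiv 1\pmod{48}$, giving the six singletons and the three good pairs you list) but is only an organizational economy; the paper checks the residues directly, and the substance of the argument is identical.
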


\begin{proof}

Assume that $n \not\equiv 2,3,9,14,15 ,21\pmod{48}$. From the sequence  in Table 2 in Appendix~\ref{AppB}, we have 

\begin{center}
\[
D_{n}(1,0) = \left\{
\begin{alignedat}{2}
  & 2 & \quad & \text{if}\ n = 0,\\
  & 1 & \quad & \text{if}\ n \geq 1.
\end{alignedat}
\right.
\] 
\newline
\[
D_{n}(1,1) = \left\{
\begin{alignedat}{2}
  & 1 & \quad & \text{if}\ n \equiv 1, 5 \pmod{6},\\
  & 2 & \quad & \text{if}\ n \equiv 0  \pmod{6},\\
  & 5 & \quad & \text{if}\ n \equiv 3  \pmod{6},\\
  & 6 & \quad & \text{if}\ n \equiv 2, 4 \pmod{6}.
\end{alignedat}
\right.
\]
\newline
\[
D_{n}(1,2) = \left\{
\begin{alignedat}{2}
  & 1 & \quad & \text{if}\ n \equiv 1 \pmod{3},\\
  & 2 & \quad & \text{if}\ n \equiv 0 \pmod{3},\\
  & 4 & \quad & \text{if}\ n \equiv 2 \pmod{3}.
\end{alignedat}
\right.
\]
\newline
\[
D_{n}(1,3) = \left\{
\begin{alignedat}{2}
  & 0 & \quad & \text{if}\ n \equiv 4,12,20,28,36,44 \pmod{48},\\
  & 1 & \quad & \text{if}\ n \equiv 1,7,27,32,34,45,46 \pmod{48},\\
  & 2 & \quad & \text{if}\ n \equiv 0,2,13,14,17,23,43 \pmod{48},\\
  & 3 & \quad & \text{if}\ n \equiv 5,6,9,15,35,40,42 \pmod{48},\\
  & 4 & \quad & \text{if}\ n \equiv 11,16,18,29,30,33,39 \pmod{48},\\
  & 5 & \quad & \text{if}\ n \equiv 19,24,26,37,38,41,47 \pmod{48},\\
  & 6 & \quad & \text{if}\ n \equiv 3,8,10,21,22,25,31 \pmod{48}.
\end{alignedat}
\right.
\]
\newline
\[
D_{n}(1,4) = \left\{
\begin{alignedat}{2}
  & 0 & \quad & \text{if}\ n \equiv 2,6,10,14,18,22 \pmod{24},\\
  & 1 & \quad & \text{if}\ n \equiv 1,7,8  \pmod{24},\\
  & 2 & \quad & \text{if}\ n \equiv 0,17,23  \pmod{24},\\
  & 3 & \quad & \text{if}\ n \equiv 3,4,21 \pmod{24},\\
  & 4 & \quad & \text{if}\ n \equiv 9,15,16 \pmod{24},\\
  & 5 & \quad & \text{if}\ n \equiv 5,11,12 \pmod{24},\\
  & 6 & \quad & \text{if}\ n \equiv 13,19,20 \pmod{24}.
\end{alignedat}
\right.
\]
\newline
\[
D_{n}(1,5) = \left\{
\begin{alignedat}{2}
  & 0 & \quad & \text{if}\ n \equiv 3 \pmod{6},\\
  & 1 & \quad & \text{if}\ n \equiv 1 \pmod{6},\\
  & 2 & \quad & \text{if}\ n \equiv 0  \pmod{6},\\
  & 3 & \quad & \text{if}\ n \equiv 4, 5  \pmod{6},\\
  & 5 & \quad & \text{if}\ n \equiv 2  \pmod{6}.
\end{alignedat}
\right.
\]
\newline
\[
D_{n}(1,-1) = \left\{
\begin{alignedat}{2}
  & 0 & \quad & \text{if}\ n \equiv 4,12 \pmod{16},\\
  & 1 & \quad & \text{if}\ n \equiv 1, 7 \pmod{16},\\
  & 2 & \quad & \text{if}\ n \equiv 0  \pmod{16},\\
  & 3 & \quad & \text{if}\ n \equiv 2,11,13,14  \pmod{16},\\
  & 4 & \quad & \text{if}\ n \equiv 3,5,6,10  \pmod{16},\\
  & 5 & \quad & \text{if}\ n \equiv 8  \pmod{16},\\
  & 6 & \quad & \text{if}\ n \equiv 9, 15  \pmod{16}.
\end{alignedat}
\right.
\]
\end{center}

Clearly, $D_0(1,x)$ is not a PP of $\mathbb{Z}_7$. Therefore, for the rest of the proof, assume that $n\geq 1$. 
Since $D_{n}(1,0) = 1$ for all $n \geq 1$, $D_n(1,x)$ is clearly not a PP over $\mathbb{Z}_7$ whenever $n\equiv 5 \pmod{6}$, $n\equiv 1 \pmod{3}$, $n\equiv 27, 45 \pmod{48}$, $n\equiv 8 \pmod{24}$, and $n\equiv 1,7 \pmod{16}$. If we exclude the $n$ values less than or equal to $47$ that satisfy these congruences, we have $n = 6,12,18,20,24,26,30,36,38,42,44.$

Note that $D_6(1,x), D_{12}(1,x)$,$D_{18}(1,x), D_{24}(1,x)$, $D_{30}(1,x), D_{36}(1,x)$and $D_{42}(1,x)$ map $1$ and $2$ to $2$. 

Also, $D_{20}(1,x)$ and $D_{44}(1,x)$ map $3$ and $6$ to $0$, and $D_{26}(1,x)$ and $D_{38}(1,x)$ map $3$ and $5$ to $5$. 

Therefore, $D_n(1,x)$ is not a PP of $\mathbb{Z}_5$ whenever $$n = 6,12,18,20,24,26,30,36,38,42,44.$$

Since $D_{48}(1,1)=2=D_{48}(1,2)$, and the fact that  $D_{n_1}(1,x)=D_{n_2}(1,x)$ for all $x\in \mathbb{F}_p$ whenever $n_1,n_2>0$ are integers such that $n_1\equiv n_2\pmod{p^2-1}$, we have shown that if $n \not\equiv 2,3,9,14,15 ,21\pmod{48}$, then $D_n(1,x)$ is not a PP of $\mathbb{Z}_7$. This completes the proof. 
\end{proof}



\appendix

\section{Cycle types of PPs and CPPs}\label{AppA}

In Appendix~\ref{AppA}, we list the cycle types of permutation polynomials and complete permutation polynomials arising from reversed Dickson polynomials. \vskip 0.3in

\begin{tabularx}{\textwidth}{ |X|X|X| }
\hline
& $p=3$ & $p>3$ \\
\hline
$D_2(1,x)\ \ \ \ \ \ \ \ \ $ $D_{2p}(1,x)\ \ \ \ \ \ \ \ \ \ $ $E_3(1,x)\ \ \ \ \ \ \ \ \ $ & $(3)$ & $(\underbrace{\frac{p-1}{j},...,\frac{p-1}{j}}_{j\rm\ times},1),$ where $j\in\mathbb{Z}^+,$ $j\vert p-1,$ and $\text{ord}_p(-2)=\frac{p-1}{j}$ \\
\hline
$D_3(1,x)\ \ \ \ \ \ \ \ \ \ \ \ \ \ \ \ \ $ $D_{3p}(1,x)$ & &$(\underbrace{\frac{p-1}{j},...,\frac{p-1}{j}}_{j\rm\ times},1),$ where $j\in\mathbb{Z}^+,$ $j\vert p-1,$ and $\text{ord}_p(-3)=\frac{p-1}{j}$ \\
\hline
$E_2(1,x)$ & \multicolumn{2}{c|}{$(\underbrace{2,...,2}_{p-1\rm\ times},1)$} \\
\hline
\end{tabularx}

\vskip 0.3in

\begin{tabularx}{\textwidth}{ |X|X| }
\hline
 polynomial & $p\equiv1\pmod{12}$ or $p\equiv7\pmod{12}$ \\
\hline
$D_{p+2}(1,x)\ \ \ \ \ \ \ \ \ \ \ \ \ \ \ \ \ \ \ \ \ \ \ \ \ \ \ \ \ \ \ \ \ \ $ $D_{2p+1}(1,x)$ & $(\underbrace{\frac{p-1}{j},...,\frac{p-1}{j}}_{j/2\rm\ times},\underbrace{1,...,1}_{\frac{p+1}{2}\rm\ times}),$ where $j\in\mathbb{Z}^+,$ $j\vert p-1,$ and $\text{ord}_p(-3)=\frac{p-1}{j}$ \\
\hline
\end{tabularx}

\vskip 0.3in

\begin{tabularx}{\textwidth}{ |X|X|X|X| }
\hline
 polynomial & $p=3$ & $p=5$ & $p=7$ \\
\hline
$E_5(1,x)$ & $(2,1)$ & & \\
\hline
$E_{15}(1,x)$ & $(3)$ & & \\
\hline
$E_{20}(1,x)$ & $(2,1)$ & & \\
\hline
$E_{94}(1,x)$ & & $(3,1,1)$ &  \\
\hline
$E_{170}(1,x)$ & & & $(4,2,1)$ \\
\hline
\end{tabularx}

\vskip 0.3in

\begin{tabularx}{\textwidth}{ |X|X|X| }
\hline
& $p=3$ & $p>3$ \\
\hline
$D_2(1,x)+x\ \ \ \ \ \ \ \ \ $ $D_{2p}(1,x)+x\ \ \ \ \ \ \ \ \ $ $E_3(1,x)+x$ & \multicolumn{2}{c|}{$(\underbrace{2,...,2}_{p-1\rm\ times},1)$} \\
\hline
$D_3(1,x)+x\ \ \ \ \ \ \ \ \ $ $D_{3p}(1,x)+x$ &  & $(\underbrace{\frac{p-1}{j},...,\frac{p-1}{j}}_{j\rm\ times},1),$ where $j\in\mathbb{Z}^+,$ $j\vert p-1,$ and $\text{ord}_p(-2)=\frac{p-1}{j}$ \\
\hline
$E_{15}(1,x)+x$ & $(2,1)$ & \\
\hline
\end{tabularx}


\section{Numerical results}\label{AppB}

In Appendix~\ref{AppB}, we present our numerical results. 

The sequence of $D_{n}^{\prime}(1,1)$ modulo $3$ for $0 \leq n \leq 23$ 

\begin{equation}\label{B0}
D_{n}^{\prime}(1,1): 2,1,1,0,0,2,0,0,1,0,0,2,0,0,1,0,0,2,0,0,0,0,0,2
\end{equation} 

The sequence  of $\displaystyle{E_n^{\prime}\Big(1,\frac{1}{4}\Big)}$ over $\mathbb{Z}_3$ for $0 \leq n \leq 17$ 

\begin{equation}\label{B5}
\displaystyle{E_n^{\prime}\Big(1,\frac{1}{4}\Big)}: 0, 0, 2, 1, 2, 2, 1, 2, 0, 0, 0, 1, 2, 1, 1, 2, 1, 0
\end{equation}

The sequence  of $\displaystyle{E_n^{\prime}\Big(1,\frac{1}{4}\Big)}$ over $\mathbb{Z}_5$ for $0 \leq n \leq 19$

\begin{equation}\label{B6}
\displaystyle{E_n^{\prime}\Big(1,\frac{1}{4}\Big)}: 0, 0, 4, 3, 0, 0, 0, 2, 4, 0, 0, 0, 1, 2, 0, 0, 0, 3, 1, 0
\end{equation}

The sequence of $\displaystyle{E_n^{\prime}\Big(1,\frac{1}{4}\Big)}$ over $\mathbb{Z}_7$ for $0 \leq n \leq 20$

\begin{equation}\label{B7}
\displaystyle{E_n^{\prime}\Big(1,\frac{1}{4}\Big)}: 0, 0, 6, 5, 1, 1, 0, 0, 0, 3, 6, 4, 4, 0, 0, 0, 5, 3, 2, 2, 0
\end{equation}

\begin{center}
\begin{tabular}{|p{1cm}|p{0.3cm}|p{0.3cm}|p{0.3cm}|p{0.3cm}|p{0.3cm}|p{0.3cm}|p{0.3cm}|p{0.3cm}|p{0.3cm}|p{0.3cm}|p{0.3cm}|p{0.3cm}|p{0.3cm}|p{0.3cm}|p{0.3cm}|p{0.3cm}|p{0.3cm}|p{0.3cm}|p{0.3cm}|p{0.3cm}|p{0.3cm}|p{0.3cm}|p{0.3cm}|p{0.3cm}|}
\hline
 \multicolumn{25}{|c|}{Table 1. The sequence  of $D_{n}(1,x)$ over $\mathbb{Z}_5$ for $0 \leq  n \leq 23$\label{B1}} \\
 \hline
$n$ & $0$ & $1$ & $2$ & $3$ & $4$ & $5$ & $6$ & $7$ & $8$ & $9$ & $10$ & $11$ & $12$ & $13$ & $14$ & $15$ & $16$ & $17$ & $18$ & $19$ & $20$ & $21$ & $22$ & $23$\\
\hline
$x = 0$ & $2$ & $1$ & $1$ & $1$ &$1$ &$1$ &$1$ &$1$ &$1$ &$1$ &$1$ &$1$ &$1$ &$1$ &$1$ &$1$ &$1$ &$1$ &$1$ &$1$ &$1$ &$1$ &$1$ &$1$ \\
\hline
$x = 1$ & $2$ & $1$ & $4$ & $3$ &$4$ &$1$ &$2$ &$1$ &$4$ &$3$ &$4$ &$1$ &$2$ &$1$ &$4$ &$3$ &$4$ &$1$ &$2$ &$1$ &$4$ &$3$ &$4$ &$1$ \\
\hline
$x = 2$ & $2$ & $1$ & $2$ & $0$ &$1$ &$1$ &$4$ &$2$ &$4$ &$0$ &$2$ &$2$ &$3$ &$4$ &$3$ &$0$ &$4$ &$4$ &$1$ &$3$ &$1$ &$0$ &$3$ &$3$ \\
\hline
$x = 3$ & $2$ & $1$ & $0$ & $2$ &$2$ &$1$ &$0$ &$2$ &$2$ &$1$ &$0$ &$2$ &$2$ &$1$ &$0$ &$2$ &$2$ &$1$ &$0$ &$2$ &$2$ &$1$ &$0$ &$2$ \\
\hline
$x = 4$ & $2$ & $1$ & $3$ & $4$ &$2$ &$1$ &$3$ &$4$ &$2$ &$1$ &$3$ &$4$ &$2$ &$1$ &$3$ &$4$ &$2$ &$1$ &$3$ &$4$ &$2$ &$1$ &$3$ &$4$ \\
\hline
\end{tabular}
\end{center}

\vskip 0.3in 

\begin{center}
\begin{tabular}{|p{1cm}|p{0.3cm}|p{0.3cm}|p{0.3cm}|p{0.3cm}|p{0.3cm}|p{0.3cm}|p{0.3cm}|p{0.3cm}|p{0.3cm}|p{0.3cm}|p{0.3cm}|p{0.3cm}|p{0.3cm}|p{0.3cm}|p{0.3cm}|p{0.3cm}|p{0.3cm}|p{0.3cm}|p{0.3cm}|p{0.3cm}|p{0.3cm}|p{0.3cm}|p{0.3cm}|p{0.3cm}|}
\hline
 \multicolumn{25}{|c|}{Table 2. The sequence of $D_{n}(1,x)$ over $\mathbb{Z}_7$ for $0 \leq n \leq 47$\label{B2}} \\
 \hline
$n$ & $0$ & $1$ & $2$ & $3$ & $4$ & $5$ & $6$ & $7$ & $8$ & $9$ & $10$ & $11$ & $12$ & $13$ & $14$ & $15$ & $16$ & $17$ & $18$ & $19$ & $20$ & $21$ & $22$ & $23$\\
\hline
$x = 0$ & $2$ & $1$ & $1$ & $1$ &$1$ &$1$ &$1$ &$1$ &$1$ &$1$ &$1$ &$1$ &$1$ &$1$ &$1$ &$1$ &$1$ &$1$ &$1$ &$1$ &$1$ &$1$ &$1$ &$1$ \\
\hline
$x = 1$ & $2$ & $1$ & $6$ & $5$ &$6$ &$1$ &$2$ &$1$ &$6$ &$5$ &$6$ &$1$ &$2$ &$1$ &$6$ &$5$ &$6$ &$1$ &$2$ &$1$ &$6$ &$5$ &$6$ &$1$ \\
\hline
$x = 2$ & $2$ & $1$ & $4$ & $2$ &$1$ &$4$ &$2$ &$1$ &$4$ &$2$ &$1$ &$4$ &$2$ &$1$ &$4$ &$2$ &$1$ &$4$ &$2$ &$1$ &$4$ &$2$ &$1$ &$4$ \\
\hline
$x = 3$ & $2$ & $1$ & $2$ & $6$ &$0$ &$3$ &$3$ &$1$ &$6$ &$3$ &$6$ &$4$ &$0$ &$2$ &$2$ &$3$ &$4$ &$2$ &$4$ &$5$ &$0$ &$6$ &$6$ &$2$ \\
\hline
$x = 4$ & $2$ & $1$ & $0$ & $3$ &$3$ &$5$ &$0$ &$1$ &$1$ &$4$ &$0$ &$5$ &$5$ &$6$ &$0$ &$4$ &$4$ &$2$ &$0$ &$6$ &$6$ &$3$ &$0$ &$2$ \\
\hline
$x = 5$ & $2$ & $1$ & $5$ & $0$ &$3$ &$3$ &$2$ &$1$ &$5$ &$0$ &$3$ &$3$ &$2$ &$1$ &$5$ &$0$ &$3$ &$3$ &$2$ &$1$ &$5$ &$0$ &$3$ &$3$ \\
\hline
$x = 6$ & $2$ & $1$ & $3$ & $4$ &$0$ &$4$ &$4$ &$1$ &$5$ &$6$ &$4$ &$3$ &$0$ &$3$ &$3$ &$6$ &$2$ &$1$ &$3$ &$4$ &$0$ &$4$ &$4$ &$1$ \\
\hline
\end{tabular}
\end{center}

\vskip 0.3in

\begin{center}
\begin{tabular}{|p{1cm}|p{0.3cm}|p{0.3cm}|p{0.3cm}|p{0.3cm}|p{0.3cm}|p{0.3cm}|p{0.3cm}|p{0.3cm}|p{0.3cm}|p{0.3cm}|p{0.3cm}|p{0.3cm}|p{0.3cm}|p{0.3cm}|p{0.3cm}|p{0.3cm}|p{0.3cm}|p{0.3cm}|p{0.3cm}|p{0.3cm}|p{0.3cm}|p{0.3cm}|p{0.3cm}|p{0.3cm}|}
\hline
 \multicolumn{25}{|c|}{Table 2 (contd.) The sequence of $D_{n}(1,x)$ over $\mathbb{Z}_7$ for $0 \leq n \leq 47$ (contd.) \label{B2}} \\
 \hline
$n$ & $24$ & $25$ & $26$ & $27$ & $28$ & $29$ & $30$ & $31$ & $32$ & $33$ & $34$ & $35$ & $36$ & $37$ & $38$ & $39$ & $40$ & $41$ & $42$ & $43$ & $44$ & $45$ & $46$ & $47$\\
\hline
$x = 0$ & $1$ & $1$ & $1$ & $1$ &$1$ &$1$ &$1$ &$1$ &$1$ &$1$ &$1$ &$1$ &$1$ &$1$ &$1$ &$1$ &$1$ &$1$ &$1$ &$1$ &$1$ &$1$ &$1$ &$1$ \\
\hline
$x = 1$ & $2$ & $1$ & $6$ & $5$ &$6$ &$1$ &$2$ &$1$ &$6$ &$5$ &$6$ &$1$ &$2$ &$1$ &$6$ &$5$ &$6$ &$1$ &$2$ &$1$ &$6$ &$5$ &$6$ &$1$ \\
\hline
$x = 2$ & $2$ & $1$ & $4$ & $2$ &$1$ &$4$ &$2$ &$1$ &$4$ &$2$ &$1$ &$4$ &$2$ &$1$ &$4$ &$2$ &$1$ &$4$ &$2$ &$1$ &$4$ &$2$ &$1$ &$4$ \\
\hline
$x = 3$ & $5$ & $6$ & $5$ & $1$ &$0$ &$4$ &$4$ &$6$ &$1$ &$4$ &$1$ &$3$ &$0$ &$5$ &$5$ &$4$ &$3$ &$5$ &$3$ &$2$ &$0$ &$1$ &$1$ &$5$ \\
\hline
$x = 4$ & $2$ & $1$ & $0$ & $3$ &$3$ &$5$ &$0$ &$1$ &$1$ &$4$ &$0$ &$5$ &$5$ &$6$ &$0$ &$4$ &$4$ &$2$ &$0$ &$6$ &$6$ &$3$ &$0$ &$2$ \\
\hline
$x = 5$ & $2$ & $1$ & $5$ & $0$ &$3$ &$3$ &$2$ &$1$ &$5$ &$0$ &$3$ &$3$ &$2$ &$1$ &$5$ &$0$ &$3$ &$3$ &$2$ &$1$ &$5$ &$0$ &$3$ &$3$ \\
\hline
$x = 6$ & $5$ &$6$ &$4$ &$3$ &$0$ &$3$ &$3$ &$6$ &$2$ &$1$ &$3$ &$4$ &$0$ &$4$ &$4$ &$1$ &$5$ &$6$ &$4$ &$3$ &$0$ &$3$ &$3$ &$6$ \\
\hline
\end{tabular}
\end{center}


\section{Python codes}\label{AppC}

In Appendix~\ref{AppC}, we present the python codes to generate permutation polynomials and complete permutation polynomials from reversed Dickson polynomials over $\mathbb{F}_p$ and to compute cycle types of reversed Dickson permutation polynomials. 

\subsubsection*{Python code to generate permutation polynomials from RDPs}

\begin{verbatim}
import math
from sympy import symbols, div, Poly

def degree_mod(poly, mod):
    _, r = div(poly, mod)
    return r.degree()

x = symbols('x')
d0 = Poly(2, x)
d1 = Poly(1, x)
n0 = 1
m = x**p - x
p = 3

def mod_poly(poly, mod):
    coeffs = poly.all_coeffs()
    return Poly([coeff % mod for coeff in coeffs], x)

for n in range(2, p**2):
    for k in range(n0 + 1, n + 1):
        d = d1 - (x * d0)
        q, r = div(d, x**p - x)
        d = mod_poly(r, p)
        d0 = d1
        d1 = d
        n0 = n
        f = d + x
        d_is_pp = True
        for t in range(1, p - 1):
            q, r = div(d**t,m)
            modded_coeffs = [coeff % p for coeff in r.all_coeffs()]
            modded_poly = Poly(modded_coeffs, x)
            if modded_poly.degree() > p - 2:
                d_is_pp = False
                break
        q, r = div(d**(p-1),m)
        modded_coeffs = [coeff % p for coeff in r.all_coeffs()]
        modded_poly = Poly(modded_coeffs, x)
        if modded_poly.degree() != p - 1:
            d_is_pp = False
        if d_is_pp:
            print(k,':',d, " is a pp")
            for i in range(2, 200):
                substituted_poly = d.subs(x, i)
                r = substituted_poly % p 
                if r == i:
                    print(i, "is a fixed point, with n =", n)

\end{verbatim}

\subsubsection*{Python code to generate complete permutation polynomials from RDPs}

\begin{verbatim}
from sympy import symbols, div, Poly

# function to calculate degree of a polynomial modulo a given value
def degree_mod(poly, mod):
    _, r = div(poly, mod)
    return r.degree()

x = symbols('x')
d0 = Poly(2, x)
d1 = Poly(1, x)
n0 = 1
p = 5
m = x**p - x

def mod_poly(poly, mod):
    coeffs = poly.all_coeffs()
    return Poly([coeff % mod for coeff in coeffs], x)

for n in range(2, p**2):
    for k in range(n0 + 1, n + 1):
        d = d1 - (x * d0)
        q, r = div(d, x**p - x)
        d = mod_poly(r, p)
        d0 = d1
        d1 = d

        n0 = n
        f = d + x
    
    # Hermite's Criterion for d
        d_is_pp = True
        for t in range(1, p - 1):
            q, r = div(d**t,m)
            modded_coeffs = [coeff % p for coeff in r.all_coeffs()]
            modded_poly = Poly(modded_coeffs, x)
            if modded_poly.degree() > p - 2:
                d_is_pp = False
                break  # If d is not a PP, break the t loop
        q, r = div(d**(p-1),m)
        modded_coeffs = [coeff % p for coeff in r.all_coeffs()]
        modded_poly = Poly(modded_coeffs, x)
        if modded_poly.degree() != p - 1:
            d_is_pp = False
    # If d passes Hermite's Criterion, check 
        if d_is_pp:
            #print("d is a PP for n = ",n)
        # Hermite's Criterion for f
            f_is_pp = True
            for t in range(1, p - 1):
                q, r = div(f**t,m)
                modded_coeffs = [coeff % p for coeff in r.all_coeffs()]
                modded_poly = Poly(modded_coeffs, x)
                if modded_poly.degree() > p - 2:
                    f_is_pp = False
                    break
            q, r = div(f**(p-1),m)
            modded_coeffs = [coeff % p for coeff in r.all_coeffs()]
            modded_poly = Poly(modded_coeffs, x)
            if modded_poly.degree() != p - 1:
                f_is_pp = False
    
            if d_is_pp & f_is_pp:
                print('d = ',{d})
                print("d is a CPP for n =", n)

\end{verbatim}

\subsubsection*{Python code to compute cycle types of reversed Dickson permutation polynomials}

\begin{verbatim}

from sympy import symbols, expand

# Define the polynomial
x = symbols('x')
poly = expand( 1 + x)

# Define the field size
field_size = 3

# Compute the permutation
permutation = [(i, poly.subs(x, i) % field_size) for i in range(field_size)]

# Function to convert permutation to cycle notation
def permutation_to_cycles(permutation):
    # Start with no cycles
    cycles = []
    # While there is an unprocessed part of the permutation
    while permutation:
        # Start a new cycle
        n, p = permutation.pop()
        cycle = [n]
        # While the cycle is not closed
        while p != cycle[0]:
            # Add to the cycle
            for i, (n2, p2) in enumerate(permutation):
                if n2 == p:
                    cycle.append(n2)
                    p = p2
                    permutation.pop(i)
                    break
        # Add the cycle to the list of cycles
        cycles.append(cycle)
    # Return the cycles
    return cycles

# Compute the cycles
cycles = permutation_to_cycles(permutation)

# Print the cycles
for cycle in cycles:
    print("(", " ".join(str(n) for n in cycle), ")")
    
\end{verbatim}

\subsubsection*{Python code to compute fixed points}

\begin{verbatim}

from sympy import symbols, div, Poly

# function to calculate degree of a polynomial modulo a given value
def degree_mod(poly, mod):
    _, r = div(poly, mod)
    return r.degree()

p = (enter the p value)
a = []
x = symbols('x')
d0 = Poly(2, x)
d1 = Poly(1, x)
n0 = 1
m = x**p - x


def mod_poly(poly, mod):
    coeffs = poly.all_coeffs()
    return Poly([coeff % mod for coeff in coeffs], x)

for n in range(2, p**2):
    for k in range(n0 + 1, n + 1):
        d = d1 - (x * d0)
        q, r = div(d, x**p - x)
        d = mod_poly(r, p)
        d0 = d1
        d1 = d

        n0 = n
        f = d + x
    
        # Hermite's Criterion for d
        d_is_pp = True
        for t in range(1, p - 1):
            q, r = div(d**t,m)
            modded_coeffs = [coeff % p for coeff in r.all_coeffs()]
            modded_poly = Poly(modded_coeffs, x)
            if modded_poly.degree() > p - 2:
                d_is_pp = False
                break  # If d is not a PP, break the t loop
        q, r = div(d**(p-1),m)
        modded_coeffs = [coeff % p for coeff in r.all_coeffs()]
        modded_poly = Poly(modded_coeffs, x)
        if modded_poly.degree() != p - 1:
            d_is_pp = False
        # If d passes Hermite's Criterion, check 
        if d_is_pp:
            print(k, ':', d, " is a pp")
            for i in range(2, 200):
                substituted_poly = d.subs(x, i)
                r = substituted_poly % p 
                if r == i:
                    a.append(r)
                    fixed_pt = len(a)
                    print(i, "is a fixed point, with n =", n)
                    
fixed_pt_count = len(a)
print('The total number of fixed points:', fixed_pt_count)
    
\end{verbatim}

\end{document}